\newtheorem{remark}{Remark}[section]
\newtheorem{example}{Example}[section]
\definecolor{darkred}{rgb}{0.85,0,0}
\newcommand{\ZZ}[1]{{\color{black} #1}}
\def\al{\alpha}
\def\d{\mbox{d}}
\def\i{\mathrm{i}}
\def \Dal{\partial_t^\alpha}
\def\bU{{\bf{U}}}
\def\hbU{ {\bf{\widehat U}}}
\def\d{{\rm d}}
\title{A parallel-in-time algorithm for high-order BDF methods for diffusion and subdiffusion equations}
\author{Shuonan Wu\thanks{School of Mathematical Sciences, Peking University,  Beijing 100871, China.
Email address: snwu@math.pku.edu.cn}
\and Zhi Zhou\thanks{Department of Applied Mathematics, The Hong Kong Polytechnic University, Hung Hom, Hong Kong. Email address: zhizhou@polyu.edu.hk}}
\date{\today}
\begin{document}

\maketitle

\setlength\abovedisplayskip{5pt}
\setlength\belowdisplayskip{5pt}

\begin{abstract}
  In this paper,  we propose a parallel-in-time algorithm for
  approximately solving parabolic equations.  In particular, we
  apply the $k$-step backward differentiation formula, and
  then develop an iterative solver by using the waveform relaxation
  technique.  Each resulting iteration represents a periodic-like
  system, which could be further solved in parallel by using the
  diagonalization technique.  The convergence of the waveform relaxation iteration is
  theoretically examined by using the generating function method.  The
  approach we established in this paper extends the existing argument
  of single-step methods in Gander and Wu [Numer. Math., 143 (2019),
  pp.  489--527] to general BDF methods up to order six.  The argument
  could be further applied to the time-fractional subdiffusion
  equation, whose discretization shares common properties of the
  standard BDF methods, because of the nonlocality of the fractional
  differential operator.
   Illustrative numerical results are presented to complement the
  theoretical analysis.
\end{abstract}

\begin{keywords}
  parabolic equation, subdiffusion equation, backward differentiation formula,
  parallel-in-time algorithm, convergence analysis, convolution quadrature
\end{keywords}

\begin{AMS}
Primary: 65Y05, 65M15, 65M12.
\end{AMS}

\pagestyle{myheadings}
\thispagestyle{plain}

\section{Introduction}\label{sec:intro}

The aim of this paper is to develop a parallel-in-time (PinT) solver for
high-order time stepping schemes of  diffusion models.  We
begin with the normal diffusion, which is described by
parabolic equations.

Let $V\subset H=H'\subset V'$ be a Gelfand triple of complex Hilbert
spaces. Namely, the embedding $V\hookrightarrow H$ is continuous and
dense, and
$$
\langle u,v\rangle=(u,v),\quad\forall\, u\in H\hookrightarrow V',\,\,\forall\, v\in V\hookrightarrow H,
$$
where $\langle \cdot,\cdot\rangle$ is the duality pairing between $V'$
and $V$, and $(\cdot,\cdot)$ is the inner product on $H$.  Throughout,
we let $\|  \cdot \|$ and $\| \cdot \|_V$ denote the norms of the
space $H$ and $V$, respectively.

Let $T>0$, $v\in H$, $f\in L^2(0,T; V')$, and consider the initial
value problem of seeking $u\in L^2(0,T;V)\cap
H^1(0,T;V')\hookrightarrow C([0,T];H)$ such that
\begin{equation} \label{eqn:heat}
\left\{
\begin{aligned}
\partial_t u(t) + Au(t) &= f(t), \quad \text{for all} ~~t\in(0,T],  \\
u(0) &= v,
\end{aligned}
\right.
\end{equation}
where $A: V\to V'$ is a self-adjoint bounded linear operator  with the following elliptic property:
\begin{equation}\label{eqn:coe}
 \beta_0 \|u\|_{V}^2\le \langle A u, u\rangle \le \beta_1 \|u\|_{V}^2, \forall\,u\in V,
\end{equation}
with constants $\beta_1>\beta_0>0$.
\ZZ{For example, if we consider a heat equation on a bounded Lipschitz domain $\Omega\subset \mathbb{R}^d$
and $A$ denotes negative Laplacian $-\Delta$ with homogeneous
Dirichlet boundary conditions,
then $H=L^2(\Omega)$ and $V=H_0^1(\Omega)$.}

In recent years, the development and analysis of parallel algorithms
for solving evolution problems have attracted a lot of attention. The
first group of parallel schemes is based on the inverse Laplace
transform, which represents the solution as a contour integral
in the complex plane and a carefully designed quadrature rule
\cite{Sheen:2013, McLeanSloanThomee:2006,  SheenSloanThomee:2000,
Weideman:2007}.  {Such a method} is directly parallelizable and
accurate even for a nonsmooth problem data.
However, this strategy is not directly applicable for the nonlinear problem or anomalous diffusion
problems with time-dependent diffusion coefficients.
To the second group belongs the {widely used} parareal
algorithm \cite{Bal:2005, GanderJiang:SISC2013, GanderVandewalle:2007,
MadayTurinici:2002,  WuZhou:2015}, which could be derived as a
multigrid-in-time method or a multiple shooting method in the time
direction.  See also the space-time multigrid method
\cite{Hackbusch:book, Horton:1995, LubichOstermann:1987,
Weinzierl:2012, Vandewalle:1992}.  We refer the interested reader to
survey papers \cite{Gander:2015, BenjaminSchroder:2020} and references
therein.

Very recently, in \cite{GanderWu:2019}, Gander and Wu developed a
novel PinT algorithm, by applying the waveform relaxation
\cite{Miekkala:1987, Nevanlinna:1989b} and a diagonalization
technique \cite{Maday:2008,Gander:2020ParaDiag,GoddardWathen:2019}. In particular, they proposed a simple
iteration: for given $u_{m-1}(T) \in H$, look for $u_m$ such that
\begin{equation} \label{eqn:heat-iter-0}
\left\{
\begin{aligned}
\partial_t u_m(t) + Au_m(t) &= f(t), \quad \text{for all}~~ t\in(0,T];  \\
u_m(0) &= v + \kappa (u_m  -   u_{m-1} )(T) .
\end{aligned}\right.
\end{equation}
Here $\kappa$ denotes a relaxation parameter.
Note that the exact solution $u$ is a fixed point of the iteration \eqref{eqn:heat-iter-0}.
It was proved in \cite[Theorem 3.1]{GanderWu:2019} that, by selecting a proper $\kappa\in(0,1)$, the iteration \eqref{eqn:heat-iter-0}
converges with the convergence factor $  {\kappa e^{-cT}}/{(1-\kappa e^{-cT})}$, \ZZ{where $c$ is the smallest eigenvalue of $A$ and $c \geq \beta_0$.}

Then, a direct discretization of \eqref{eqn:heat-iter-0} by the backward Euler method immediately results
in a periodic-like discrete system, and therefore the diagonalization
technique is applicable here to carry out a direct
parallel computation.
\ZZ{The diagonalization technique was firstly proposed by Maday and R\o nquist for solving
evolution models \cite{Maday:2008}. The basic idea is to reformulate the time-stepping system into a space-time
all-at-once system, then diagonalize the time stepping matrix and solve all time steps in parallel.
The computational cost of each iteration is
proved to be $O([MN \log(N)+\widetilde{M}_f N]/p)$ for each
processor, where $M, N$ are numbers of degree
of freedoms in space and time respectively, $\widetilde{M}_f$ is the computational cost for solving a Poisson-like problem obtained by diagonalization,
and $p$ is the number of  used processors.}
In particular, Gander and Wu considered
single-step $\theta$-methods for solving \eqref{eqn:heat-iter-0} with
uniform step size \cite[Section 3.2]{GanderWu:2019}.  The convergence
analysis of the discrete system was also established, where the proof
is similar to the argument for the continuous problem.

With $\varepsilon$
being the desired error tolerance, a $k$-th order time stepping scheme
requires \ZZ{$N=O(\varepsilon^{-1/k})$} time steps. Therefore,
the   computational complexity for each
processor turns out to be $O([(\widetilde{M}_f + M |\log(\varepsilon)|/k)\varepsilon^{-1/k} ]/p)$.
Besides, the roundoff error of the algorithm is proved to be $O(\epsilon \kappa^{-2} N^{2}) = O(\epsilon \kappa^{-2}  \varepsilon^{-2/k})$,
where $\epsilon$ is the machine precision; see more details in Section \ref{ssec:dev-heat}.
Those facts motivate us to develop and analyze PinT schemes for \eqref{eqn:heat}, by using
some high-order time stepping schemes,  {such as the} $k$-step backward
differentiation formula (BDF$k$) with $k=2,3,\ldots,6$, and the
aforementioned waveform relaxation technique.  This is beyond the
scope of all existing references \cite{Gander:SISC-wave,
GanderWu:2019} which only focus on $\theta$-methods, and represents
the main theoretical achievements of the work.

Instead of directly discretizing \eqref{eqn:heat-iter-0}, we start
with the time stepping schemes of \eqref{eqn:heat} using BDF$k$
 with uniform step size $\tau=T/N$. Then, by perturbing the
discrete problem, we obtain a periodic-like system in each iteration, which can be
parallelly solved by using $O([MN \log(N)+\widetilde{M}_f N]/p)$ operations (for each
processor).  We prove that the resulting iteration linearly converges
to the exact solution (with a proper choice of the relaxation
parameter $\kappa$), by using the generating function technique as
well as the decay property of discrete solution operator.
Specifically, let $U_m^n$ be the solution of the $m$-th iteration of
the perturbed iterative system  with the initial
guess $U_0^n = v$ for all $0 \le n\le N$, and $u$ be the exact
solution to \eqref{eqn:heat}.  Provided certain data regularity, we show   the error estimate  for all $1 \le n\le N$
(Theorem \ref{thm:conv}):
\begin{equation*}
 \begin{aligned}
   \|U_m^n - u(t_n)\| &\le c(\gamma(\kappa)^m + \tau^k t_n^{-k}),\quad \text{with}~~ \kappa\in(0,1)~ \text{independent of}~ \tau,
 \end{aligned}
\end{equation*}
where the positive constant $c$ and the convergence factor \eqref{eq:gamma-heat}
$$ \gamma(\kappa) =  \frac{c\kappa e^{-c_1 T}}{1-c \kappa e^{-c_1 T}}  \in (0,1)$$
might depend on $k$, $\kappa$, $\beta_0$, $T$, $v$ and $f$,
but it is independent of $\tau$, $n$, $m$ and $u$.
\ZZ{Therefore,  to attain the discretization error $O(\tau^k)$ or $O(N^{-k})$, the computational
complexity for each processor is $O(\log(N)[MN \log(N)+\widetilde{M}_f N]/p)$.} 

The above argument could be
extended to the subdiffusion model, that involves a time-fractional
derivative of order $\alpha\in(0,1)$.  Let $T>0$, $v\in H$, $f\in
L^p(0,T; V')$ with $p\in(2/\alpha,\infty)$, and consider the initial
value problem of seeking $u\in L^p(0,T;V)\cap
H^\alpha(0,T;V')$ 
 such that
\begin{equation}\label{eqn:fde}
\left\{
\begin{aligned}
    \Dal u(t)  +  Au(t) &= f(t), \quad \text{for all}~~ t\in(0,T],\\
    u(0)&=v.
\end{aligned}
\right.
\end{equation}
Here $\partial_t^\alpha u$ denotes the left-sided Caputo fractional derivative of order $\alpha$, defined by 
\begin{equation}\label{McT}
   \partial_t^\alpha u(t):= \frac{1}{\Gamma(1-\al)}  \int_0^t(t-s)^{-\al}u'(s)\, ds,
\end{equation}

The interest in \eqref{eqn:fde} is motivated by its excellent
capability of modeling anomalously slow diffusion,  such as protein
diffusion within cells \cite{golding2006physical}, thermal diffusion
in media with fractal geometry \cite{Nigmatulin:1986}, and contaminant
transport in groundwater \cite{kirchner2000fractal}, to name but a
few.
The literature on the numerical approximation for the subdiffusion
equation \eqref{eqn:fde} is vast.  The most popular methods include
convolution quadrature
\cite{CuestaLubichPalencia:2006,JinLiZhou:SISC2017,BanjaiLopez:2019,Fischer:2019},
collocation method \cite{ZhuXu:2019, Stynes:2017, Liao:2018,
Kopteva:2019}, discontinuous Galerkin method
\cite{McLeanMustapha:2009, MustaphaAbdallahFurati:2014,
McLeanMustapha:2015}, and spectral method \cite{Chen:2020, HouXu:2017,
ZayernouriKarniadakis:2013}.  See also \cite{Baffet-1,
JiangZhang:2017, MR2385897, GasparRodrigo:2017, XuHesthavenChen:2015}
for some fast algorithms.

Our argument for linear multistep schemes could be easily applied to
many popular time stepping schemes for the subdiffusion problem
\eqref{eqn:fde}.  As an example, we consider the convolution
quadrature generated by BDF$k$ method, which was established by
Lubich's series of work \cite{Lubich:1986, Lubich:1988}.
Note that the fractional derivative is nonlocal-in-time, and
hence its discretization inherits the nonlocality and behaves like
a multistep discretization with an infinitely-wide stencil.  By
perturbing the time stepping scheme, we develop an iterative algorithm that requires a periodic-like system to be solved in each iteration,
which could be parallelly solved by a diagonalization technique with
\ZZ{$O([MN \log(N) + \widetilde{M}_f N]/p)$ operations for each processor}.  Moreover, error estimates of the
resulting numerical schemes are established by using the decay
property of the (discrete) solution operator.  We prove the following
error estimate (Theorem \ref{thm:conv-frac}):
\begin{equation*}
 \begin{aligned}
   \|U_m^N - u(t_N)\| &\le c(\gamma(\kappa)^m + \tau^k t_n^{-k}),\qquad \text{with}\quad \kappa=O(1/\log (N)),
 \end{aligned}
\end{equation*}
where $U_m^n$ is the solution to the iterative algorithm
\eqref{eqn:BDF-frac-iter} with the initial guess $U_0^n = v$ for all
$0 \le n\le N$, and $u$ is the exact solution to the subdiffusion
problem  \eqref{eqn:fde}.  In the estimate, the positive constant
$c$ and the convergence factor
\begin{equation*}
 \gamma(\kappa) =  \frac{c\kappa \log(N)}{1-c\kappa \log(N)}  \in (0,1)
\end{equation*}
\ZZ{might depend on $\alpha$, $k$, $\kappa$, $\beta_0$, $T$, $v$ and $f$}, but they
are always independent of $\tau$, $n$, $m$ and $u$.
The analysis is promising for some nonlinear
evolution equations as well as (sub-)diffusion problems involving
time-dependent diffusion coefficients. \ZZ{See a very recent work of Gu and Wu
 \cite{GuWu:JCP2020} for a parallel algorithm by using the diagonalization technique,
where the analysis only works for BDF$2$ with $\alpha<5/8$.}

The rest of the paper is organized as follows. In Section
\ref{sec:heat}, we develop a high-order time parallel schemes for
solving the parabolic problem and analyze its convergence by
generating function technique. In Section \ref{sec:frac}, we extend
our discussion to the nonlocal-in-time subdiffusion problem. Finally,
in Section \ref{sec:numerics}, we present some numerical results to
illustrate and complement the theoretical analysis.

\section{Parallel algorithms for normal diffusion equations}\label{sec:heat}
The aim of this section is to propose high-order multistep PinT schemes, with rigorous convergence analysis,
for approximately solving the parabolic equation \eqref{eqn:heat}.

\subsection{BDF$k$ scheme for normal diffusion equations}
We consider the BDF$k$ scheme, $k=1,2,\ldots,6$, with uniform step size.
Let $\{t_n = \tau n\}$ be a uniform partition of the interval $[0,T]$, with a time step size $\tau = T/N$.
For $n \ge 1$, the $k$-step BDF scheme seeks $U^n\in V$ such that  \cite{LiWangZhou:SINUM2020}
%
\begin{equation} \label{eqn:BDF}
\begin{aligned}
\bar\partial_\tau  U^{n} + AU^n &= f(t_n)
+ a_n^{(k)}  (f(0) -A v ) +
\sum_{\ell=1}^{k-2} b_{\ell,n}^{(k)}\tau^{\ell} \partial_t^{\ell}f(0)=:\bar f_n, \\
U^{-(k-1)}&=\cdots=U^{-1}=U^0=v.
\end{aligned}
\end{equation}
In \eqref{eqn:BDF} we use the BDF$k$ to approximate the first-order derivative by
$$
\bar{\partial}_t U^n := \frac{1}{\tau} \sum_{j=0}^{k} \omega_j U^{n-j}, 
$$
where constants $\{\omega_j\}$  are coefficients of the polynomials 
\begin{align}\label{eqn:bdf-gen}
 \delta_k (\zeta) := \sum_{\ell=1}^k \frac 1\ell  (1-\zeta)^\ell = \sum\limits^k_{j=0}\omega_j \zeta ^{j}.
\end{align}
For $\alpha=1$, the BDF$k$ scheme
is known to be $A(\vartheta_k)$-stable with angle $\vartheta_k= 90^\circ$, $90^\circ$, $86.03^\circ$,
$73.35^\circ$, $51.84^\circ$, $17.84^\circ$ for $k = 1,2,3,4,5,6$, respectively \cite[pp. 251]{HairerWanner:1996}.

\ZZ{To apply the BDF$k$ for parabolic problem, it is well-known that
one need starting data $U^j = u(t_j) + O(\tau^k)$ for $0\le j\le k-1$.
Then the  error bound of the time stepping scheme is $O(\tau^k)$.
However, for nonlocal-in-time subdiffusion models (which will be discussed in section \ref{sec:frac}),
the knowledge of $U^j$ for $0\le j\le k-1$ does not guarantee an error bound $O(\tau^k)$.
It is due to the lack of compatibility of problem data.
Fortunately, in the preceding work of the second author and his colleagues, it was proved that
one can recover the optimal error bound $O(\tau^k)$ by modifying the starting $k-1$ steps \cite{JinLiZhou:SISC2017}.
The strategy also works for the BDF$k$ for classical parabolic equations  \cite{LiWangZhou:SINUM2020}.
In order to keep consistency with numerical schemes for subdiffusion models,
we decide to apply the modified formulation \eqref{eqn:BDF} of the BDF$k$.}

\begin{table}[htb!]
\caption{The coefficients $a_n^{(k)}$ and $b_{\ell,n}^{(k)}$.}
\label{tab:anbn}
\centering
     \begin{tabular}{|c|ccccc|c|ccccc|}
\hline
      $k$   &  $a_1^{(k)}$  & $a_2^{(k)}$  & $a_3^{(k)}$ & $a_4^{(k)}$ & $a_5^{(k)}$
      &$\ell$ &$b_{\ell,1}^{(k)}$  & $b_{\ell,2}^{(k)}$  & $b_{\ell,3}^{(k)}$ & $b_{\ell,4}^{(k)}$ & $b_{\ell,5}^{(k)}$    \\[2pt]
\hline
       $k=2$       & $ \frac{1}{2}$ & & & & & & & &&&\\
\hline
       $k=3$          &$\frac{11}{12}$  & $-\frac{5}{12}$    &   &  &   &$\ell=1$   &$\frac{1}{12}$  &  0   &  &    & \\[2pt]
 \hline
       $k=4$          &$\frac{31}{24}$  & $-\frac{7}{6}$  & $\frac{3}{8}$ &  &           &$\ell=1$   &$\frac{1}{6}$  & $-\frac{1}{12}$    & $0$   &  &  \\[3pt]
             &  &     &  &     &                    &$\ell=2$   & $0$                &  $0$  & $0$   & & \\[2pt]
\hline
       $k=5$          &$\frac{1181}{720}$  & $-\frac{177}{80}$   & $\frac{341}{240}$ & $-\frac{251}{720}$  &      &$\ell=1$   &$\frac{59}{240}$  & $-\frac{29}{120}$   & $\frac{19}{240}$ & $0$ &  \\[3pt]
                &  &     &  &     &                    &$\ell=2$   & $\frac{1}{240}$                 & $-\frac{1}{240}$  & $0$ & $0$ &  \\[3pt]
                 &  &     &  &     &                   &$\ell=3$   &$\frac{1}{720}$ & $0$  & $0$ & $0$ & \\[2pt]
\hline
       $k=6$          &$\frac{2837}{1440}$& $-\frac{2543}{720}$   &$\frac{17}{5}$ & $-\frac{1201}{720}$ & $\frac{95}{288}$
        &$\ell=1$   &$\frac{77}{240}$& $-\frac{7}{15}$  &$\frac{73}{240}$ & $-\frac{3}{40}$  & 0\\[3pt]
           &  &     &  &     &                    &$\ell=2$   & $\frac{1}{96}$             & $-\frac{1}{60}$  & $\frac{1}{160}$  & $0$ & 0 \\[3pt]
          &  &     &  &     &                     &$\ell=3$   & $-\frac{1}{360}$ &  $\frac{1}{720}$ & $0$ & $0$  & 0\\[3pt]
           &  &     &  &     &                    &$\ell=4$   & $0$ & $0$ & $0$ & $0$  & 0\\[2pt]
\hline
     \end{tabular}
\end{table}

The next lemma provides some properties of the generating function.
The proof has been provided in \cite[Theorem A.1]{JinLiZhou:SISC2017}, and hence omitted here.\vskip5pt

\begin{lemma}\label{lem:delta}
For any $\varepsilon$, there exists $\theta_\varepsilon\in (\pi/2,\pi)$ such that for any
$\theta\in (\pi/2,\theta_\varepsilon)$, there holds $\delta_k (e^{-z}) \in \Sigma_{\pi-\vartheta_k+\varepsilon} $ for any $z\in \Gamma_{\theta}^{\pi/\sin{\theta}}= \{ z=re^{\pm i \theta}, ~0 \le r\le \pi/\sin\theta \}$, where
$\Sigma_\psi := \{z \in \mathbb{C}:~|\arg(z)| \leq \psi\}$. Meanwhile,
there exist positive constants $c_0$ and $c_0'$ such that
 \begin{equation*}
  \begin{aligned}
 c_0 |z| \le  |\delta_k (e^{-z})| \le c_0' |z| \qquad \forall~z\in \Gamma_{\theta}^{\pi/\sin{\theta}}.
\end{aligned}
\end{equation*}
 \end{lemma}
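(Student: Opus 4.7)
The plan is to combine a Taylor-series analysis near $z=0$ with the $A(\vartheta_k)$-stability of BDF$k$ on the rest of the contour. I would fix a small $r_0>0$ and handle the small-radius piece $\{z\in\Gamma_\theta^{\pi/\sin\theta}: |z|\le r_0\}$ using the power series of $\delta_k(e^{-z})$ about the origin, and the complementary piece $\{r_0\le |z|\le \pi/\sin\theta\}$ via a continuity/compactness argument anchored at the vertical contour $\theta=\pi/2$.

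\emph{Near $z=0$.} Truncating $-\log\zeta=\sum_{\ell=1}^\infty \ell^{-1}(1-\zeta)^\ell$ at $\ell=k$ shows $\delta_k(e^{-z}) = z + O(z^{k+1})$, so $\delta_k(e^{-z})/z = 1+h_k(z)$ with $h_k$ analytic and $h_k(z)=O(z^k)$. I would shrink $r_0$ so that $|h_k(z)|\le\min(\tfrac12,\sin(\varepsilon/2))$ for $|z|\le r_0$; this forces $|\delta_k(e^{-z})|/|z|\in[\tfrac12,\tfrac32]$ and $|\arg\delta_k(e^{-z})-\arg z|\le\varepsilon/2$ on the small-radius piece. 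Since $\arg z=\pm\theta$, the target inclusion $|\arg\delta_k(e^{-z})|\le\pi-\vartheta_k+\varepsilon$ reduces to $\theta\le\pi-\vartheta_k+\varepsilon/2$, which is ensured by requiring $\theta_\varepsilon\le\pi-\vartheta_k+\varepsilon/2$.

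\emph{Away from $z=0$.} I would invoke the standard reformulation of $A(\vartheta_k)$-stability, namely $\delta_k(\{|\zeta|\le 1\})\subseteq\Sigma_{\pi-\vartheta_k}$, combined with the zero-stability of BDF$k$ for $k\le 6$ (so $\zeta=1$ is the only zero of $\delta_k$ on $\{|\zeta|\le 1\}$, and it is simple). On the limiting contour $\theta=\pi/2$ this gives $\delta_k(e^{\mp ir})\in\Sigma_{\pi-\vartheta_k}$ and $|\delta_k(e^{\mp ir})|\ge\mu_0>0$ uniformly for $r\in[r_0,\pi]$. Parameterising $z=se^{\pm i\theta}$, the map $(s,\theta)\mapsto \delta_k(e^{-se^{\pm i\theta}})$ is continuous on the compact set $K=\{(s,\theta):\pi/2\le\theta\le\theta_\varepsilon,\ r_0\le s\le \pi/\sin\theta\}$, and a contradiction-by-subsequence argument forces the image into $\Sigma_{\pi-\vartheta_k+\varepsilon}$ while keeping it bounded below by $\mu_0/2$, provided $\theta-\pi/2$ is sufficiently small. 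Since $|z|=s$ is pinched between two positive constants on $K$, the modulus bounds $c_0|z|\le|\delta_k(e^{-z})|\le c_0'|z|$ follow at once. Taking $\theta_\varepsilon$ to be the minimum of the thresholds from the two regimes completes the proof.

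\emph{Main obstacle.} The substantive analytic inputs are (i) the equivalence between $A(\vartheta_k)$-stability and the image condition $\delta_k(\{|\zeta|\le 1\})\subseteq\Sigma_{\pi-\vartheta_k}$, and (ii) the nonvanishing of $\delta_k$ on $\{|\zeta|\le 1\}\setminus\{1\}$; both depend on the restriction $k\le 6$ and are classical for BDF (see \cite{HairerWanner:1996}). A secondary technicality is making the perturbation $\theta\downarrow\pi/2$ uniform in $z$ despite the fact that the contour itself moves with $\theta$, which is precisely why I would parameterise over $(s,\theta)$ on a fixed compact set rather than on the $\theta$-dependent curve.
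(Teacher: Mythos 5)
Your argument is correct, and it follows essentially the same route as the proof the paper defers to (the paper omits the proof, citing \cite{JinLiZhou:SISC2017}, Theorem A.1): a Taylor expansion $\delta_k(e^{-z})=z+O(z^{k+1})$ near $z=0$, combined with $A(\vartheta_k)$-stability and a compactness/continuity perturbation of the contour away from the origin. Your explicit handling of the fact that $e^{-z}$ leaves the unit disk for $\theta>\pi/2$, via the parameterisation over a fixed compact set in $(s,\theta)$, is exactly the technical point that makes the standard argument work.
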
\vskip5pt

For $n\ge k$, we choose $a_n^{(k)}$ and $b_{\ell,n}^{(k)}$  to be zero.
Then $\bar\partial_\tau u(t_n)$ is the standard approximation of $\partial_tu(t_n)$ by BDF$k$. 
For $1\le n\le k-1$, these constants have been determined in \cite{JinLiZhou:SISC2017, LiWangZhou:SINUM2020}, cf. Table \ref{tab:anbn}.
In particular, if
\begin{equation}\label{eqn:data}
v \in H \quad  \text{and} \quad f\in W^{k,1}(0,T;H) \cap C^{k-1}([0,T];H),
\end{equation}
the numerical solution to \eqref{eqn:BDF} satisfies the following error estimate. We omit the proof here and
refer interested readers
to \cite[Theorem 1.1]{LiWangZhou:SINUM2020} and \cite[Theorem 2.1]{JinLiZhou:SISC2017} for error analysis
of  (non-selfadjoint) parabolic systems and fractional subdiffusion equations, respectively.

\begin{lemma}\label{lem:error-BDF}
Assume that the problem data \ZZ{$v$ and $f$ satisfy \eqref{eqn:data}}, then
\begin{equation}\label{eqn:err-BDF}
  \begin{aligned}
  \|U^n-u(t_n)\|  \leq & c\, \tau^k
  \bigg(t_n^{ -k }  \| v \|
  + \sum_{\ell=0}^{k -1 } t_n^{ \ell -k +1 }  \|\partial_t^{\ell}f(0)\| +\int_0^{t_n} \|\partial_s^{k}f(s)\|d s\bigg) ,
  \end{aligned}
\end{equation}
where the constant $c$ is independent of $\tau$ and $n$.
\end{lemma}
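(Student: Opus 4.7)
The plan is to follow the now standard Laplace/generating-function framework used in \cite{JinLiZhou:SISC2017,LiWangZhou:SINUM2020}. First I would split the error into a homogeneous part driven by $v$ (with $f\equiv 0$) and an inhomogeneous part driven by $f$ with zero initial data, and analyze each separately. For the continuous problem I use the Dunford--Taylor representation
\begin{equation*}
 u(t) = \frac{1}{2\pi \i}\int_{\Gamma} e^{zt}(z+A)^{-1}\widehat g(z)\,\d z,
\end{equation*}
on a Hankel contour $\Gamma$ opening to the left, with $\widehat g$ the Laplace transform of the appropriate data. For the discrete problem, multiplying \eqref{eqn:BDF} by $\zeta^n$ and summing over $n\ge 1$ gives, thanks to the modification at the first $k-1$ steps, a clean generating-function identity of the form
\begin{equation*}
\big(\tau^{-1}\delta_k(\zeta)+A\big)\wt U(\zeta) = \wt{\bar f}(\zeta) + \text{(boundary terms from $v$)}.
\end{equation*}
Lemma~\ref{lem:delta} guarantees that $\tau^{-1}\delta_k(e^{-z\tau})$ lies in a sector in which $-A$ is resolvent--bounded, so the Cauchy formula $U^n = \tfrac{1}{2\pi\i}\oint \zeta^{-n-1}\wt U(\zeta)\,\d\zeta$ can be deformed to the substitution $\zeta=e^{-z\tau}$, yielding a discrete analogue of the contour representation above on the truncated contour $\Gamma^{\pi/\sin\theta}_\theta$.

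Next I would subtract the two contour representations and exploit the consistency expansion $\tau^{-1}\delta_k(e^{-z\tau}) = z\bigl(1+O(|z\tau|^k)\bigr)$ valid on the contour. Combined with the uniform resolvent bound $\|(z+A)^{-1}\|_{H\to H}\le c/|z|$ (available because $A$ is self-adjoint positive definite from \eqref{eqn:coe}), this gives a pointwise kernel estimate proportional to $|z|^{k-1}\tau^k\, e^{\re(z)t_n}$. Choosing the contour radius $|z|\sim 1/t_n$ and integrating with $\re(z)t_n\le -c$ turns the $|z|^{k-1}$ factor into the singular weight $t_n^{-k}$ needed to bound the contribution of $v$, and in exactly the same way the contributions coming from $\partial_t^\ell f(0)$ produce the $t_n^{\ell-k+1}$ weights in \eqref{eqn:err-BDF}.

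The smooth remainder in $f$ is treated by writing
\begin{equation*}
 f(t) = \sum_{\ell=0}^{k-1} \frac{t^\ell}{\ell!}\,\p_t^\ell f(0) + R_k(t), \qquad R_k(t)=\frac{1}{(k-1)!}\int_0^t (t-s)^{k-1}\p_s^k f(s)\,\d s,
\end{equation*}
plugging into \eqref{eqn:BDF}, and checking that the corrections $a_n^{(k)}(f(0)-Av)$ and $b_{\ell,n}^{(k)}\tau^\ell \p_t^\ell f(0)$ are precisely designed so that the polynomial part is reproduced exactly up to order $k$; this is the content of the coefficient construction in Table~\ref{tab:anbn} and Theorem A.1 of \cite{JinLiZhou:SISC2017}. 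The residual $R_k$ is then handled by a discrete convolution/Young-type inequality on the same contour, which converts $\int_0^{t_n}\|\p_s^k f(s)\|\,\d s$ into the last term of \eqref{eqn:err-BDF}.

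The main obstacle is bookkeeping rather than a single deep estimate: one must verify simultaneously (i) that Lemma~\ref{lem:delta} still applies after truncating the continuous contour at $|\im z|=\pi/\tau$ and incurs only a negligible error $O(e^{-c/\tau})$, and (ii) that the \emph{correction coefficients} exactly cancel the low-order consistency defects up to order $\tau^k$ so that the Taylor-remainder argument produces only the stated source-term norms, without picking up unwanted factors of $\tau^{-1}$ from the starting-step corrections. Once these are in place, putting the pieces together gives \eqref{eqn:err-BDF} with a constant independent of $\tau$ and $n$.
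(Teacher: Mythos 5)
Your proposal is correct and follows essentially the same route as the paper, which itself omits the proof and defers to \cite[Theorem 1.1]{LiWangZhou:SINUM2020} and \cite[Theorem 2.1]{JinLiZhou:SISC2017}: those references use exactly the generating-function/contour-integral representation, the consistency expansion $\tau^{-1}\delta_k(e^{-z\tau})=z(1+O(|z\tau|^k))$ with the sectorial resolvent bound, and the Taylor splitting of $f$ whose polynomial part is cancelled by the correction coefficients of Table~\ref{tab:anbn}. The two bookkeeping points you flag (contour truncation at $|\im z|=\pi/\tau$ and exact cancellation by the starting-step corrections) are indeed the only delicate steps, and they are handled in the cited works precisely as you describe.
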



\subsection{Development of parallel-in-time algorithm}\label{ssec:scheme-heat}
Next, we develop a PinT algorithm for  \eqref{eqn:BDF}: for given $U_{m-1}^n$, $N-k+1\le n\le N$, we
compute $U_{m}^n$ by
\begin{equation} \label{eqn:BDF-iter}
\begin{aligned}
\bar\partial_\tau  U_m^{n} + AU_m^n &=  \bar f_n, ~\quad\qquad\qquad\qquad\qquad n = 1,2,\cdots, N,\\
U_m^{-j} &= v + \kappa(U_m^{N-j} - U_{m-1}^{N-j}),\quad j=0,1,\ldots,k-1,
\end{aligned}
\end{equation}
where the revised source term $\bar f_n$ is given in \eqref{eqn:BDF}.
Note that the exact time stepping solution $\{U^n\}_{n=1}^N$ is a
fixed point of this iteration.  In Section \ref{ssec:conv-heat}, we provide a
systematic framework to study the iterative algorithm
\eqref{eqn:BDF-iter}, which also works for the time-fractional
subdiffusion problem discussed later.

We may rewrite the BDF$k$ scheme \eqref{eqn:BDF-iter} in the
following matrix form:
\begin{equation} \label{eq:BDF-k-matrix}
\frac{1}{\tau}(B_k(\kappa) \otimes I_x){\bf U}_m + (I_t \otimes A)
 {\bf U}_m ={\bf F}_{m-1},
\end{equation}
where ${\bf U}_m = (U_m^1, U_m^2, \cdots, U_m^N)^T$, ${\bf
F}_{m-1}=(F_1,F_2,\cdots,F_N)^T$ with
\begin{equation} \label{eqn:BDF-k-F}
  F_n :=  \bar f_n + \frac{\kappa}{\tau}\sum_{j=1}^n\omega_j
  U_{m-1}^{N-j+1} + \frac{1}{\tau}\sum_{j=0}^{n-1} \omega_j v,
\end{equation}
and
$$
B_k(\kappa) =
\begin{bmatrix}
\omega_0 &\cdots &   0 &  \kappa \omega_k& \cdots & \kappa \omega_2& \kappa \omega_1\\
\omega_1 & \omega_0 & &     0 &    \kappa \omega_k& \cdots  &  \kappa \omega_2 \\
 \vdots&\vdots & &  &  & \ddots& \vdots  \\
 \vdots&\vdots & &   \ddots &   & 0& \kappa \omega_k\\
\omega_k& \omega_{k-1}& \cdots&  &       & & 0\\
 0 & \omega_{k} & \omega_{k-1} & \cdots  & &   &\vdots  \\
   &   &&      \ddots  & \ddots  &  & \vdots \\
& &  0 & \omega_k& \cdots&     \omega_1&\omega_0
\end{bmatrix}.
$$
Here, we recall that $\omega_j = 0$ if $j > k$ and $\sum_{j=0}^k
\omega_j = 0$ for the normal diffusion equations. The following lemma
is crucial for the design of PinT algorithm.

\begin{lemma}[Diagonalization]\label{lem:diag-heat}
Let $\Lambda(\kappa) = \mathrm{diag}(1, \kappa^{-\frac1N}, \cdots,
\kappa^{-\frac{N-1}{N}})$, then
$$
B_k(\kappa) = \Lambda(\kappa) \tilde{B}_k(\kappa)
  \Lambda(\kappa)^{-1},
$$
where the circular matrix $\tilde{B}_k(\kappa)$ has the form
$$
\tilde{B}_k(\kappa) =
\begin{bmatrix}
\omega_0 &     0 &  \kappa^{\frac{k}N} \omega_k& \cdots &  \kappa^{\frac2N} \omega_2& \kappa^{\frac1N} \omega_1\\
\kappa^{\frac1N}\omega_1 & \omega_0 &       0 &    \kappa^{\frac{k}N}  \omega_k& \cdots  & \kappa^{\frac2N}  \omega_2 \\
 \vdots&\vdots & &  &    \ddots& \vdots  \\
 \vdots&\vdots &     \ddots &    & 0&\kappa^{\frac {k }N} \omega_k\\
\kappa^{\frac kN} \omega_k& \kappa^{\frac {k-1}N} \omega_{k-1}& \cdots&  &        & 0\\
 0 & \kappa^{\frac kN} \omega_{k} & \kappa^{\frac {k-1}N} \omega_{k-1} & \cdots      &  \\
   &   &&      \ddots  & \ddots  &    \vdots \\
&   0 & \kappa^{\frac kN} \omega_k& \cdots&    \kappa^{\frac 1N} \omega_1&\omega_0
\end{bmatrix}.
$$
As a consequence, $B_k(\kappa)$ can be diagonalized by
$$
  B_k(\kappa) = S(\kappa) D_k(\kappa) S(\kappa)^{-1}, \quad
  S(\kappa):=\Lambda(\kappa) V,
$$
  where the Fourier matrix
 \begin{equation}  \label{eqn:Fourier-matrix}
  V = [v_1,v_2,\ldots,v_N], \quad \text{with}\quad v_n = \big[1,\mathrm{e}^{i\frac{2(n-1)\pi}{N}}, \ldots,
  \mathrm{e}^{i\frac{2(n-1)(N-1)\pi}{N}} \big]^T,
  \end{equation}
  and $D_k(k)$ is a diagonal matrix.
\end{lemma}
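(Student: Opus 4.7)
The plan is to prove the two claims in sequence: first the conjugacy $B_k(\kappa)=\Lambda(\kappa)\tilde B_k(\kappa)\Lambda(\kappa)^{-1}$ by a direct entrywise calculation, and then the full diagonalization by observing that $\tilde B_k(\kappa)$ is circulant and hence diagonalized by the Fourier matrix $V$.

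For the first step, I would describe $B_k(\kappa)$ uniformly as the block-Toeplitz-plus-low-rank matrix whose $(i,j)$ entry equals $\omega_{i-j}$ when $0\le i-j\le k$, equals $\kappa\,\omega_{i-j+N}$ when $N-k\le j-i\le N-1$, and vanishes otherwise. With $\Lambda(\kappa)=\mathrm{diag}(\kappa^{-(i-1)/N})_{i=1}^{N}$, conjugation simply rescales each entry,
\[
\bigl(\Lambda(\kappa)^{-1} B_k(\kappa) \Lambda(\kappa)\bigr)_{i,j}=\kappa^{(i-1)/N}\,(B_k(\kappa))_{i,j}\,\kappa^{-(j-1)/N}.
\]
On the Toeplitz band the exponent of $\kappa$ collapses to $(i-j)/N$ times $\omega_{i-j}$, producing $\kappa^{(i-j)/N}\omega_{i-j}$; on the upper-right block the exponent becomes $((i-j)+N)/N$, and multiplying by $\kappa$ yields $\kappa^{(i-j+N)/N}\omega_{i-j+N}$. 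In both cases the same expression $\kappa^{\ell/N}\omega_\ell$ with $\ell\equiv i-j\pmod N$ appears, which is precisely the $(i,j)$ entry of the circulant $\tilde B_k(\kappa)$ displayed in the statement. This proves the similarity.

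For the second step, I would invoke the standard fact that every $N\times N$ circulant matrix is diagonalized by the Fourier matrix $V$ defined in \eqref{eqn:Fourier-matrix}. Concretely, if $c_\ell:=\kappa^{\ell/N}\omega_\ell$ for $0\le\ell\le k$ (and $c_\ell=0$ for $k<\ell<N$) denote the symbols of $\tilde B_k(\kappa)$, a direct computation of $\tilde B_k(\kappa)v_n$ using $\tilde B_{i,j}=c_{(i-j)\bmod N}$ shows that $v_n$ is an eigenvector with eigenvalue
\[
\lambda_n=\sum_{\ell=0}^{k} c_\ell\, e^{-\mathrm i\,2(n-1)\ell\pi/N}=\delta_k\!\bigl(\kappa^{1/N}e^{-\mathrm i\,2(n-1)\pi/N}\bigr),
\]
where $\delta_k$ is the BDF$k$ generating polynomial from \eqref{eqn:bdf-gen}. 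Hence $\tilde B_k(\kappa)=V D_k(\kappa) V^{-1}$ with $D_k(\kappa)=\mathrm{diag}(\lambda_1,\ldots,\lambda_N)$. Combining with the first step gives $B_k(\kappa)=\Lambda(\kappa)V\,D_k(\kappa)\,V^{-1}\Lambda(\kappa)^{-1}=S(\kappa)D_k(\kappa)S(\kappa)^{-1}$, as desired.

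Neither step is truly hard; the only delicate point is the entrywise bookkeeping in step one, where one must correctly track the wrap-around contribution from the $\kappa$-weighted upper-right block and verify that the extra factor of $\kappa$ is exactly what is needed to shift the exponent $(i-j)/N$ to $(i-j+N)/N$ in that region. Once this is carried out the circulant identification in step two and the resulting eigenstructure via $V$ are standard.
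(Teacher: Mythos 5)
Your proof is correct, and it is exactly the standard argument the paper relies on: the paper states Lemma~\ref{lem:diag-heat} without proof, but the eigenvalues $\delta_k\bigl(\kappa^{1/N}e^{-\mathrm i 2(n-1)\pi/N}\bigr)$ you derive are precisely the diagonal entries of $D_k(\kappa)$ used later in the roundoff-error analysis (cf.\ \eqref{D2}), confirming that your similarity-to-circulant plus DFT-diagonalization route is the intended one. The entrywise bookkeeping in your first step (the wrap-around factor $\kappa$ shifting the exponent from $(i-j)/N$ to $(i-j+N)/N$) checks out.
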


Using the above lemma, we can solve the system \eqref{eqn:BDF-iter} in a parallel-in-time manner.
\begin{algorithm}[hbt!]
  \caption{~~PinT algorithm by diagonalization technique for diffusion equation.\label{alg:heat}}
  \begin{algorithmic}[1]
    \STATE  Solve $(S(\kappa)\otimes I_x){\bf H} = {\bf F}_{m-1}$.
    \STATE  Solve $({D}_k(\kappa)\otimes I_x + \tau
  I_t\otimes A) {\bf Q} = \tau {\bf H}$.
    \STATE Solve $(S(\kappa)^{-1}\otimes I_x) {\bf U}_m =
   {\bf Q}$.
  \end{algorithmic}
\end{algorithm}

It is known that the circulant matrix can be diagonalized by FFT with
$\mathcal{O}(N\log N)$ operations \cite[Chapter 4.7.7]{Golub:book}.
Then in each iteration, it turns
out to be $N$ independent Poisson-like equations, which can be
efficiently solved by, for instance, the multigrid method.\vskip5pt

\paragraph{Speedup analysis of Algorithm \ref{alg:heat}} Let
$M_f$ be the total real floating point operations for solving the (elliptic)
Poisson-like equations. Then the
cost for the serial computation is $O(M_f N)$.
\ZZ{It is known that with
(optimal) multigrid method $M_f$ is proportional to $M$, the number of degrees of freedom in space.}

Consider the parallelization of Algorithm \ref{alg:heat} with $p$
processors. The {\bf Step 1} and {\bf Step 3} can be finished
with the total computational cost $O([MN \log(N)] / p)$ by using the bulk
synchronous parallel FFT algorithm \cite{IndaBisseling:2001}, see also
\cite[Section 4.1]{GanderWu:2019} for the detailed analysis. We note
here that the ${\bf F}$ in \eqref{eqn:BDF-k-F} needs to be updated via
${\bf U}_{m-1}$, whose computational cost is $O(M)$ since $\omega_j =
0$ when $j > k$. The PinT algorithm of ${\bf F}$ will be
more delicate for the time-fractional subdiffusion problem discussed
later.

For the {\bf Step 2}, the total computational cost is $O((\widetilde{M}_fN)/p)$,
where $\widetilde{M}_f$ denotes total real floating point operations for solving the
Poisson-like equations obtained by diagonalization,
and hence the cost for the parallel computation is $O([MN\log(N) + \widetilde{M}_f N]/p)$.
In case that $p = O(N)$, then the parallel computational cost reduces to $O(M\log(N) + \widetilde{M}_f)$.
\ZZ{In some cases, $\widetilde{M}_f$ could be (almost) linear to $M$ even in higher dimensions.
For example, if we consider the heat equation with periodic boundary conditions, we can apply FFT to solve
the Poinsson-like equations with computational complexity $\widetilde{M}_f = O(M\log M)$.}

\ZZ{With $\varepsilon$
being the desired error tolerance, a $k$-th order time stepping scheme
requires $N=O(\varepsilon^{-1/k})$ time steps. Then, in order to attain sufficient accuracy $O(\varepsilon)$,
the number of iterations should be $O(\log N)$,
because the convergence factor $\gamma(\kappa)\in (0,1)$ is
independent of $N$ (cf. Theorem 2.7).
Therefore, the total computational cost is $O([MN(\log N)^2 + \widetilde{M}_f N \log N]/p)$. }\vskip5pt

\ZZ{
\begin{remark}\label{FFT}
The proposed algorithm uses FFT to diagonalize over time.
It transforms the parabolic PDE into a decoupled set of elliptic PDEs.
Such a technique is quite common for solving time-periodic problems,
but relatively new for solving initial value problems.

In this paper, we only discuss the parallelism in the time direction. Nevertheless,
combining the proposed method with some parallel-in-space algorithms,
one may obtain an algorithm with polylog parallel complexity if $O(MN)$ processors were available. For example,
for parabolic equations with periodic boundary conditions, one may apply FFT
to transform the parabolic PDE into a decoupled set of ODEs:
$$ u_j '(t) + \lambda_j u_j(t)  = f_j(t),\quad \text{with}~~ u_j(0)=v_j, $$
where $1\le j\le M$ and $M$ denotes the number of degree
of freedoms in space. Then, these decoupled ODEs could be efficiently solved by using
 the parallel-in-time algorithm proposed in this paper.
\end{remark}}

\paragraph{Roundoff error of Algorithm \ref{alg:heat}}
Let $\bU_m$ be the exact solution of \eqref{eq:BDF-k-matrix},
and $\hbU_m$ be the solution of Algorithm \ref{alg:heat}. We assume that
the {\bf Step 2} of Algorithm \ref{alg:heat} is solved in a direct
manner (for example, the LU factorization).  Then for simplicity, we
consider an arbitrary eigenvalue of the matrix $A$ and analyze
the relative roundoff error. To this end, we replace matrices $I_x$ and $A$
by scalars $1$ and $\mu$. Then the system \eqref{eq:BDF-k-matrix} reduces to
$$B {\bU_m} = F, \quad \text{with}\quad B = \frac{1}{\tau} B_k(\kappa) + \mu I_t. $$
Then by Lemma \ref{lem:diag-heat}, we define
$$ D = S(\kappa)^{-1} B S(\kappa),\quad \text{and hence}\quad  D = \frac{1}{\tau} D_k(\kappa) + \mu I_t .$$
Note that to solve $B {\bU_m} = F$ by diagonalization is equivalent to solving $(B+\delta B) {\hbU_m} = F$
with some perturbation $\delta B$, which can be easily bounded by \cite[p. 496]{GanderWu:2019}
$$ \| \delta B  \|_2 \le \epsilon (2N+1) \| S(\kappa) \|_2 \| S(\kappa)^{-1} \|_2 \| D \|_2 + O(\epsilon^2), $$
where $\epsilon$ denotes the machine precision ($\epsilon= 2.2204\times10^{-16}$ for a 32-bit computer).
Then the roundoff error satisfies
\begin{equation*}
\begin{split}
\frac{\| {\bU_m} - {\hbU_m}  \|_2}{\| {\bU_m} \|_2} &\le \text{cond}_2 ( B) \frac{\| \delta  B \|_2}{\|   B \|_2}
\le \epsilon(2N+1) \| S(\kappa) \|_2 \| S(\kappa)^{-1} \|_2 \| D \|_2 \|  B^{-1} \|_2 \\
&\le \epsilon(2N+1)\, \text{cond}_2(S(\kappa))^2\, \text{cond}_2( D) .
\end{split}
\end{equation*}
Here we note that for $\kappa\in(0,1]$
\begin{equation*}
\begin{split}
\| S(\kappa) \|_2 \le \| \Lambda(\kappa ) \|_2 \| V \|_2 \le \kappa^{-\frac{N-1}{N}} \sqrt{N},
\end{split}
\end{equation*}
and
\begin{equation*}
\begin{split}
\| S(\kappa)^{-1} \|_2 \le \| \Lambda(\kappa )^{-1} \|_2 \| V ^{-1} \|_2 \le \frac{1}{\sqrt{N}}.
\end{split}
\end{equation*}
Therefore, we arrive at
\begin{equation*}
\begin{split}
 \text{cond}_2 (  S(\kappa)) = \| S(\kappa) \|_2 \| S(\kappa)^{-1} \|_2  \le \kappa^{-\frac{N-1}{N}} \le \kappa^{-1}.
\end{split}
\end{equation*}
For the diagonal matrix $  D$
\begin{equation*}
\begin{split}
\|  D^{-1}\|_2  &= \max_{1\le n\le N}  \Big|\frac{1}{\tau}\delta_k(\kappa^{\frac1N}e^{-\i\frac{2(n-1)\pi}{N}}) + \mu\Big|^{-1}
\le (\sin\theta_k\mu)^{-1}.
\end{split}
\end{equation*}
\ZZ{Here we apply $A(\theta_k)$ stability of the BDF$k$ scheme with $\theta_k\in(0,\pi/2)$. Similarly,
using the definition of generating function \eqref{eqn:bdf-gen}, we derive for any $\kappa\in(0,1)$
and $N>1$
\begin{equation}\label{D2}
\begin{split}
\|  D \|_2  &= \max_{1\le n\le N} \Big|\frac{1}{\tau}\delta_k(\kappa^{\frac1N}e^{-\i\frac{2(n-1)\pi}{N}})  + \mu\Big|
\le   \frac{1}{\tau}\delta_k(-\kappa^{\frac1N})  + \mu  \le \frac{1}{\tau}\delta_k(-\kappa^{\frac1N})  + \mu.
\end{split}
\end{equation}
Hence, we obtain
\begin{equation*}
\begin{split}
\text{cond}_2(   D) &\le \Big(\frac{\delta_k (-1)}{\tau} +\mu \Big)(\mu\sin\theta_k)^{-1} .
\end{split}
\end{equation*}
As a result,  we have for $\mu\in [\mu_0,\infty)$ (where the positive number $\mu_0$ depends on $\beta_0$ in \eqref{eqn:coe})
\begin{equation}\label{eqn:rounderr-heat}
\frac{\| {\bU_m } - {\hbU_m}  \|_2}{\| {\bU_m} \|_2}
\le  \epsilon (2N+1) \kappa^{-2} \Big(\frac{\delta_k (-1)}{\tau} +\mu \Big)(\mu\sin\theta_k)^{-1} \le C_k \epsilon  \kappa^{-2} N^{ 2},
\end{equation}
where the constant $C_k$ can be written as
$$C_k = 3\Big(1+\Big[\frac{\delta_k (-1)}{T}\Big] /\mu_0\Big)(\sin\theta_k)^{-1}.$$
It only depends on the order of BDF method and $\beta_0$ in \eqref{eqn:coe}.}
Note that the bound of roundoff error is uniform for $\mu\rightarrow\infty$,
therefore it holds for all self-adjoint operators $A$ satisfying \eqref{eqn:coe}.

\ZZ{\begin{remark}\label{rem:N2}
The above analysis shows a reasonable estimate that roundoff error is $O(\epsilon\kappa^{-2} N^2)$.
See a similar estimates for some A-stable single-step methods like backward
Euler scheme or Crank--Nicolson scheme \cite{GanderWu:2019}. The difference
is that the BDF$k$ scheme (with $k>2$) is no longer A-stable.
In our numerical experiments, we indeed observe that the roundoff error increases as $N\rightarrow\infty$.
\end{remark} }

\subsection{Representation of numerical solution}\label{ssec:op-heat}
The aim of this section is to develop the representation of the
numerical solution of the $k$-step BDF schemes through a contour
integral in complex domain, and to establish decaying properties of
the solution operators.

By letting $W^n = U^n-v$, we can reformulate the time stepping scheme
\eqref{eqn:BDF} as
\begin{equation}\label{eqn:w-disc}
\begin{aligned}
\bar\partial_\tau  W^{n} + AW^n &= -Av + \bar f_n , \quad n = 1,2,\cdots, N,\\
W^{-(k-1)}&=\cdots=W^{-1}=W^0=0.
\end{aligned}
\end{equation}
By multiplying $\xi^n$ on \eqref{eqn:w-disc} and taking summation over $n$ (\ZZ{we extend $n$ in \eqref{eqn:w-disc} to infinity in sense that $\bar f_n=0$ with $n\ge N$}), we have
\begin{equation*}
  \sum_{n=1}^{\infty} \xi^n \bar \partial_t W^n + \sum_{n=1}^{\infty}
  \xi^n AW^n=- \sum_{n=1}^{\infty} \xi^n Av + \bar f_n \xi^n.
\end{equation*}
For any given sequence $\{V^n\}_{n=0}^\infty$, let
$\widetilde{V}(\xi):=\sum_{n=0}^{\infty}V^n\xi^n$ denote its
generating function.  Since $W^{-(k-1)}=\cdots=W^{-1}=W^0=0$,
according to properties of discrete convolution, we have the identity
$$
\sum_{n=1}^{\infty} \xi^n\bar\partial_\tau
V_n=\frac{\delta_k(\xi)}{\tau}\widetilde{V}(\xi),
$$
where $\delta_k(\xi)$ denotes the generating function of the $k$-step
BDF method \eqref{eqn:bdf-gen}. Therefore we conclude that
\begin{equation*}
 \Big (\frac{\delta_k(\xi)}{\tau} + A\Big)\widetilde{W}(\xi)=
  -\Big(\frac{\xi} {1-\xi}\Big) Av + \widetilde{\bar f_n}(\xi),
\end{equation*}
which implies that
\begin{equation*}
 \widetilde{W}(\xi) = \Big(\frac{\delta_k(\xi)}{\tau} + A\Big)^{-1}\Big[-\Big(\frac{\xi}
  {1-\xi}\Big) Av + \widetilde{\bar f_n}(\xi)\Big].
\end{equation*}
It is easy to see that
$\widetilde{W}(\xi)$ is analytic with respect to $\xi$ in the circle $|\xi|=\rho$, for $\rho>0$ small,
on the complex plane, then with Cauchy's integral formula, we have the following expression
\begin{equation*}
\begin{aligned}
  W^n&=\frac{1}{2\pi\i}\int_{|\xi|=\rho} \xi^{-n-1}\widetilde{W}(\xi) \d\xi\\
  &=\frac{\tau}{2\pi\i}\int_{|\xi|=\rho} \xi^{-n-1} \Big( \delta_k(\xi)  + \tau A\Big)^{-1}\Big[-\Big(\frac{\xi}
  {1-\xi}\Big) Av + \widetilde{\bar f_n}(\xi)\Big] \d\xi.
\end{aligned}
\end{equation*}
%
%
%
Therefore we obtain the solution representation:
 \begin{equation}\label{eqn:solrep-wn-2}
\begin{aligned}
 U^n = (I+ F_\tau^n) v + \tau \sum_{j=1}^n E_\tau^{n-j}\bar f_j.
\end{aligned}
\end{equation}
where the discrete operators $F_\tau^n$ and $E_\tau^n$ are respectively defined by
\ZZ{\begin{equation}\label{eqn:disc-sol-op}
\begin{aligned}
F_\tau^n &= -\frac{1}{2\pi\i}\int_{|\xi|=\rho}   \frac{1}
  {\xi^n(1-\xi)} \Big( \delta_k(\xi)/\tau  +  A\Big)^{-1} A   \, \d\xi,\\
E_\tau^n &=\frac{1}{2\pi\tau\i}\int_{|\xi|=\rho}  \xi^{-n-1}
  \Big( \delta_k(\xi) /\tau + A\Big)^{-1}   \, \d\xi.
\end{aligned}
\end{equation}}

Now we recall a useful estimate (cf. \cite[Lemma 10.3]{Thomee:2006}).
For $k=1,\cdots,6$, there are positive constants $c, C$ and $\lambda_0$ \ZZ{(only depends on the BDF$k$ method)}
such that
\begin{equation}\label{eqn:est-gen}
\Big| \frac{1}{2\pi\i}\int_{|\xi|=\rho}  \xi^{-n-1}
  ( \delta_k(\xi)  +  \lambda )^{-1}   \, \d\xi  \Big|\le ~\left\{
\begin{aligned}
   & C e^{-cn\lambda},\quad  0<\lambda\le \lambda_0,\\
   & C \lambda^{-1} e^{-cn},\quad \lambda>\lambda_0.
\end{aligned}
\right.
\end{equation}

This together with the coercivity property \eqref{eqn:coe} immediately implies the following lemma.

\begin{lemma}\label{lem:sol-op}
Let $E_\tau^n$ be the discrete operator  defined in \eqref{eqn:disc-sol-op}. Then
\begin{equation*}
 \| E_\tau^{n} \|_{H \rightarrow H} \le c_2 e^{-c_1 t_n}.
\end{equation*}
Here the generic positive constants $c_1$ and $c_2$ are independent of $n$ and $\tau$.
\end{lemma}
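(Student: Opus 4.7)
The plan is to reduce the operator-norm estimate to a scalar one via the functional calculus for the self-adjoint operator $A$, and then to invoke the scalar bound \eqref{eqn:est-gen} with an appropriate case split.

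First, I would observe that the coercivity \eqref{eqn:coe} combined with the continuous embedding $V \hookrightarrow H$ implies that $A$, realized as an unbounded operator on $H$, is positive self-adjoint with spectrum contained in $[\mu_0, \infty)$, where $\mu_0 > 0$ depends only on $\beta_0$ and the embedding constant. By the spectral theorem, the operator $E_\tau^n$ defined in \eqref{eqn:disc-sol-op} is $g_n(A)$ with scalar symbol
\begin{equation*}
g_n(\lambda) = \frac{1}{2\pi\i}\int_{|\xi|=\rho}\frac{\xi^{-n-1}}{\delta_k(\xi) + \tau\lambda}\,\d\xi,
\end{equation*}
so that $\|E_\tau^n\|_{H\to H} = \sup_{\lambda \in \sigma(A)} |g_n(\lambda)|$.

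Next, I would apply \eqref{eqn:est-gen} with parameter $\tau\lambda$ and split the supremum into two regimes according to the threshold $\lambda_0$ provided there. In the regime $\tau\lambda \le \lambda_0$, the first branch of \eqref{eqn:est-gen} gives $|g_n(\lambda)| \le C e^{-c n \tau \lambda} = C e^{-c \lambda t_n} \le C e^{-c \mu_0 t_n}$, where I use $\lambda \ge \mu_0$. In the regime $\tau\lambda > \lambda_0$, the second branch gives $|g_n(\lambda)| \le C(\tau\lambda)^{-1} e^{-cn} \le C \lambda_0^{-1} e^{-c t_n/T}$, where I would convert $e^{-cn}$ into exponential decay in $t_n$ using $\tau \le T$, so that $n = t_n/\tau \ge t_n/T$. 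Setting $c_1 = c \min(\mu_0, 1/T)$ and taking $c_2$ to be the larger of the two prefactors then yields the desired bound $\|E_\tau^n\|_{H\to H} \le c_2 e^{-c_1 t_n}$ uniformly in $n$ and $\tau$.

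The main (though modest) obstacle is the conversion carried out in the second regime: the scalar bound there decays exponentially in the discrete index $n$ rather than in $t_n$, and the uniform bound $\tau \le T$ is what translates this into decay in $t_n$. Beyond this small step, the proof is essentially bookkeeping once the scalar estimate \eqref{eqn:est-gen} of Thomée is invoked.
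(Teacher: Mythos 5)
Your proposal is correct and follows exactly the route the paper intends: the authors omit the proof, stating that the scalar estimate \eqref{eqn:est-gen} "together with the coercivity property \eqref{eqn:coe} immediately implies" the lemma, and your spectral-calculus reduction plus the two-regime case split (using $\lambda\ge\mu_0$ in the first regime and $n\ge t_n/T$ in the second) is precisely the bookkeeping being elided. The details check out, including the cancellation of the $\tau^{-1}$ prefactor against $(\delta_k(\xi)/\tau+\lambda)^{-1}=\tau(\delta_k(\xi)+\tau\lambda)^{-1}$ that puts the symbol in the exact form of \eqref{eqn:est-gen}.
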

%

\subsection{Convergence analysis}\label{ssec:conv-heat}
In this section, we analyze the convergence of the iterative scheme \eqref{eqn:BDF-iter},
or equivalently,
\begin{equation} \label{eqn:BDF-iter-1}
\begin{aligned}
\bar\partial_\tau  U_m^{n} + AU_m^n  &= \bar f_n - \frac{\kappa}\tau G^n_m, \quad n = 1,2,\ldots, N,\\
U_m^{-j} &= v  ,\qquad\qquad\quad j=0,1,\ldots,k-1.
\end{aligned}
\end{equation}
where the term $G^n_m$ is given by
\begin{equation}\label{eqn:Gn}
G^n_m =   \sum_{j=n}^{k} \omega_j \left(U_m^{N+n-j} - U_{m-1}^{N+n-j}\right).
\end{equation}
Here, the summation is assumed to vanish if the lower bound is greater than the upper bound.
We aim to show that $U_m^N$ 
 converges to $U^N$, the solution of the time stepping scheme \eqref{eqn:BDF}, as $m\rightarrow\infty$.

\begin{lemma}\label{lem:conv-heat}
 Let $U_m^n$ be the solution to the iterative scheme \eqref{eqn:BDF-iter} with $v = 0$ and $\bar f_n=0$ for all $n=1,2,\ldots.N$.
 Then we can choose a proper parameter $\kappa>0$ in \eqref{eqn:BDF-iter}, which is independent of step size $\tau$, such that the following estimate holds valid:
\begin{equation*}
   \sum_{j=0}^{k-1} \|U_m^{N-j}\| \le \gamma(\kappa) \sum_{j=0}^{k-1} \| U_{m-1}^{N-j}\|.
\end{equation*}
 Here $\gamma(\kappa)\in(0,1)$, is constant depending on $\kappa$, $\beta_0$ and $T$, but independent of $\tau$.
\end{lemma}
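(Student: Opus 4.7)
The plan is to use the solution representation \eqref{eqn:solrep-wn-2} together with the exponential decay of $E_\tau^n$ from Lemma~\ref{lem:sol-op}, exploiting the fact that the perturbation $G_m^n$ in \eqref{eqn:BDF-iter-1} has support only in the first $k$ time steps. Setting $v=0$ and $\bar f_n = 0$, I view \eqref{eqn:BDF-iter-1} as the standard BDF$k$ scheme with zero initial data and source term $-\frac{\kappa}{\tau}G_m^n$. Substituting into \eqref{eqn:solrep-wn-2} gives
\begin{equation*}
U_m^n = -\kappa \sum_{j=1}^{n} E_\tau^{n-j} G_m^j,
\end{equation*}
and since the summation in \eqref{eqn:Gn} is empty for $n \ge k+1$, we have $G_m^j = 0$ for $j \ge k+1$. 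Hence for $n = N-l$ with $l = 0,\ldots,k-1$ (and $N \ge 2k$), only the first $k$ terms survive.

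Next I would control $\|G_m^j\|$ by the single quantity $S_m := \sum_{l=0}^{k-1}\|U_m^{N-l}\|$. The indices $N+j-i$ appearing in $G_m^j$ all lie in $\{N-k+1,\ldots,N\}$, so a triangle inequality with $\max_j |\omega_j| \le C_0$ gives
\begin{equation*}
\|G_m^j\|\le C_1(S_m + S_{m-1}),\qquad j=1,\ldots,k,
\end{equation*}
where $C_1$ depends only on the BDF$k$ coefficients. Combining this with Lemma~\ref{lem:sol-op} and the observation $t_{N-l-j} = T - (l+j)\tau \ge T - (2k-1)\tau$, for any $\tau$ bounded by some fixed $\tau_0$ we obtain
\begin{equation*}
\|E_\tau^{N-l-j}\|_{H\to H} \le c_2 e^{-c_1 t_{N-l-j}} \le c_3 e^{-c_1 T},
\end{equation*}
where $c_3$ depends on $k$, $c_1$, $\tau_0$ but is independent of $\tau$ itself.

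Plugging these bounds into the representation and summing over $l \in \{0,\ldots,k-1\}$ produces
\begin{equation*}
S_m \le c\kappa e^{-c_1 T}\bigl(S_m + S_{m-1}\bigr),
\end{equation*}
with $c$ independent of $\tau$, $n$, $m$. Choosing $\kappa$ small enough that $c\kappa e^{-c_1 T} < 1/2$ (a condition independent of $\tau$), rearranging yields
\begin{equation*}
S_m \le \gamma(\kappa)\, S_{m-1}, \qquad \gamma(\kappa) = \frac{c\kappa e^{-c_1 T}}{1 - c\kappa e^{-c_1 T}} \in (0,1),
\end{equation*}
which is the desired conclusion, with $\gamma(\kappa)$ taking precisely the form announced in \eqref{eq:gamma-heat}.

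The main subtlety is making sure that the factor $e^{-c_1 T}$ —which is what enables the contraction— is not lost. It survives precisely because the perturbation $G_m^j$ is supported at the small time indices $j = 1,\ldots,k$ while the quantities of interest $U_m^{N-l}$ live at the far-end indices $N-k+1,\ldots,N$, forcing the discrete propagator $E_\tau^{N-l-j}$ to transport over $\approx T$ time units. That the BDF$k$ stencil has fixed width $k$ independent of $N$ is what keeps the constant $c$ bounded in $\tau$, and hence in $N$.
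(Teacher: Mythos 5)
Your proposal is correct and follows essentially the same route as the paper: both use the discrete solution representation $U_m^n=-\kappa\sum_{i} E_\tau^{n-i}G_m^i$, exploit that $G_m^i$ vanishes for $i>k$ so that only propagators $E_\tau^{N-l-i}$ over $\approx T$ time units appear, invoke the decay bound of Lemma~\ref{lem:sol-op} to extract the factor $e^{-c_1T}$, and then apply the triangle inequality and rearrange to obtain $\gamma(\kappa)=c\kappa e^{-c_1T}/(1-c\kappa e^{-c_1T})$. Your write-up is in fact slightly more explicit than the paper's about why the constant is $\tau$-independent (the fixed stencil width $k$ and the bound $t_{N-l-i}\ge T-(2k-1)\tau$), but the argument is the same.
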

\begin{proof}
Following the preceding argument in Section \ref{ssec:op-heat}, $U_m^n$ could be represented by
\begin{equation}\label{eqn:heat-umn}
\begin{aligned}
U_m^n &=- \kappa \sum_{i=1}^{n} E_\tau^{n-i}  G_m^{i} \\
& =- \kappa \sum_{i=1}^{\min\{k,n\}} E_\tau^{n-i}
\sum_{j=i}^{k} \omega_j (U_m^{N+i-j} -    U_{m-1}^{N+i-j}), \quad  n = 1, 2, \ldots, N.
\end{aligned}
\end{equation}
Now we take the $H$ norm in \eqref{eqn:heat-umn} and apply Lemma \ref{lem:sol-op} to obtain
\begin{equation*}
\begin{aligned}
\sum_{j=0}^{k-1} \|U_m^{N-j}\| &\le c\kappa e^{-c_1 T}  \sum_{j=0}^{k-1} \| U_m^{N-j} -  U_{m-1}^{N-j}\|.
\end{aligned}
\end{equation*}
where $c$ is a generic constant and $c_1$ is the constant in Lemma \ref{lem:sol-op}.
Then we apply the triangle inequality, by choosing $\kappa$ small enough such that $c\kappa e^{-c_1 T}<1$, and hence derive that
\begin{equation*}
\begin{aligned}
\sum_{j=0}^{k-1} \|U_m^{N-j}\| &\le \frac{c\kappa e^{-c_1 T}}{1-c\kappa e^{-c_1 T}} \sum_{j=0}^{k-1} \| U_{m-1}^{N-j}\|.
\end{aligned}
\end{equation*}
Finally, we define the convergence factor
\begin{equation}\label{eq:gamma-heat}
\gamma(\kappa) :=  \frac{c\kappa e^{-c_1 T}}{1-c\kappa e^{-c_1 T}}.
\end{equation}
Then by choose $\kappa$ sufficiently small such that $c\kappa e^{-c_1 T} \in (0,1/2)$, we have $\gamma(\kappa) \in (0,1)$. 
This completes the proof of the desired assertion.
\end{proof}

 \begin{corollary}\label{cor:conv-heat-1}
 Let $U_m^n$ be the solution to the iterative scheme \eqref{eqn:BDF-iter}, and $U^n$
 be the solution to the $k$-step BDF scheme  \eqref{eqn:BDF}. Then we can choose a proper parameter $\kappa>0$
 in \eqref{eqn:BDF-iter}, which is independent of step size $\tau$, such that the following estimate holds valid:
\begin{equation*}
 \sum_{j=0}^{k-1}  \|U^{N-j} - U_m^{N-j}\| \le \gamma(\kappa) \sum_{j=0}^{k-1} \| U^{N-j} - U_{m-1}^{N-j}\|,\quad \forall ~~m\ge1.
\end{equation*}
 Here $\gamma(\kappa)\in(0,1)$, is constant depending on $\kappa$, $\beta_0$ and $T$, but independent of $\tau$.
 \end{corollary}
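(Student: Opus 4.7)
The plan is to reduce the non-homogeneous corollary directly to the homogeneous statement in Lemma \ref{lem:conv-heat} by linearity. Define the iteration error
\begin{equation*}
  e_m^n := U^n - U_m^n, \qquad n = -(k-1),\ldots,N.
\end{equation*}
Since both $U^n$ (from \eqref{eqn:BDF}) and $U_m^n$ (from \eqref{eqn:BDF-iter}) satisfy the same BDF$k$ relation with the same revised source $\bar f_n$, subtracting yields
\begin{equation*}
  \bar\partial_\tau e_m^n + A e_m^n = 0, \qquad n = 1, 2, \ldots, N.
\end{equation*}

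Next I would handle the perturbed starting data. From $U^{-j} = v$ and $U_m^{-j} = v + \kappa(U_m^{N-j} - U_{m-1}^{N-j})$, a direct computation gives
\begin{equation*}
  e_m^{-j} = -\kappa\bigl(U_m^{N-j} - U_{m-1}^{N-j}\bigr) = \kappa\bigl(e_m^{N-j} - e_{m-1}^{N-j}\bigr), \qquad j = 0,1,\ldots,k-1.
\end{equation*}
Thus $\{e_m^n\}$ satisfies precisely the iterative system \eqref{eqn:BDF-iter} with trivial data $v = 0$ and $\bar f_n \equiv 0$, but with the sequence $\{e_{m-1}^{N-j}\}_{j=0}^{k-1}$ playing the role of the previous iterate.

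Applying Lemma \ref{lem:conv-heat} to $\{e_m^n\}$ then yields
\begin{equation*}
  \sum_{j=0}^{k-1} \|e_m^{N-j}\| \le \gamma(\kappa) \sum_{j=0}^{k-1} \|e_{m-1}^{N-j}\|,
\end{equation*}
with the same choice of $\kappa$ and the same factor $\gamma(\kappa) \in (0,1)$ given by \eqref{eq:gamma-heat}, which is the claim. There is no real obstacle here: the whole argument is a one-line linearity reduction, provided one notes carefully that the structural assumption in Lemma \ref{lem:conv-heat} is only that the interior equation is homogeneous and the boundary relation has the form $\kappa$ times the difference of two consecutive iterates, both of which are verified above.
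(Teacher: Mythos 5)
Your proposal is correct and follows essentially the same route as the paper: define the iteration error, observe by linearity that it satisfies the homogeneous perturbed system with $v=0$ and $\bar f_n=0$, and invoke Lemma \ref{lem:conv-heat}. (The paper's own proof cites Theorem \ref{thm:conv} at this point, which is evidently a typo for Lemma \ref{lem:conv-heat}; your citation is the intended one.)
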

 \begin{proof}
 To this end, we let $e_m^n = (U_m^n - U^n)$, and note that the time stepping solution
$\{U^n\}_{n=1}^N$ is the fixed point of the iteration \eqref{eqn:BDF-iter}. Therefore $e_m^n$ satisfies
\begin{equation*}
\begin{aligned}
\bar\partial_\tau  e_m^{n} + Ae_m^n  &= - \frac{\kappa}\tau K^n_m , \quad n = 1,2,\cdots, N,\\
e_m^{-j} &=  0 ,\qquad\qquad j=0,1,\ldots,k-1.
\end{aligned}
\end{equation*}
where the term $K_n^m$ is given by
\begin{equation*}
\begin{aligned}
K^n_m =
\sum_{j=n}^{k} \omega_j (e_m^{N+n-j} -    e_{m-1}^{N+n-j}  ).
\end{aligned}
\end{equation*}
Then the convergence estimate follows immediately from Theorem \ref{thm:conv}.
\end{proof}

Combining Corollary \ref{cor:conv-heat-1} with the estimate \eqref{eqn:err-BDF}, we have the following
error estimate of the iterative scheme \eqref{eqn:BDF-iter}.

\begin{theorem}\label{thm:conv}
Suppose that the assumptions \eqref{eqn:coe} and \eqref{eqn:data} are valid.
Let $U_m^n$ be the solution to the iterative scheme \eqref{eqn:BDF-iter}
with the initial guess $U_0^n = v$ for all $0 \le n\le N$, and $u$
be the exact solution to  the parabolic equation \eqref{eqn:heat}.
Then by choosing proper relaxation parameter $\kappa\in(0,1)$ which is  independent of step size $\tau$,
 the following estimate holds valid:
\begin{equation*}
 \begin{aligned}
   \|U_m^n - u(t_n)\| &\le c(\gamma(\kappa)^m + \tau^k t_n^{-k}), \quad \text{for all}~~ n=1,2,\ldots,N.
 \end{aligned}
\end{equation*}
Here constants $\gamma(\kappa)\in(0,1)$ and $c>0$  might depend on $k$, $\kappa$, $\beta_0$, $T$, $v$ and $f$,
but they are  independent of $m$, $n$, $\tau$ and $u$.
\end{theorem}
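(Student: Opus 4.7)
The plan is to split via the triangle inequality
$$\|U_m^n - u(t_n)\| \le \|U_m^n - U^n\| + \|U^n - u(t_n)\|,$$
where the second term is controlled directly by Lemma \ref{lem:error-BDF}: since $t_n \le T$ and $v,f$ satisfy \eqref{eqn:data}, all the $f$-dependent quantities in \eqref{eqn:err-BDF} are absorbed into a constant, yielding $\|U^n - u(t_n)\| \le c\,\tau^k t_n^{-k}$. The task is therefore reduced to proving the iteration-error bound $\|U_m^n - U^n\| \le c\,\gamma(\kappa)^m$ uniformly in $n$, with $\gamma(\kappa) \in (0,1)$ the factor produced by Lemma \ref{lem:conv-heat} and Corollary \ref{cor:conv-heat-1}.

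For the terminal indices $n \in \{N-k+1,\ldots,N\}$, the bound is immediate from $m$-fold iteration of Corollary \ref{cor:conv-heat-1}:
$$\sum_{j=0}^{k-1} \|U^{N-j} - U_m^{N-j}\| \le \gamma(\kappa)^m \sum_{j=0}^{k-1} \|U^{N-j} - v\|.$$
The initial sum on the right is bounded by adding and subtracting $u(t_{N-j})$, combining Lemma \ref{lem:error-BDF} with the continuity of $u$ on $[0,T]$ guaranteed by \eqref{eqn:data}, so it is finite and depends only on $k$, $T$, $v$, $f$.

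The main work is to propagate this bound to earlier indices $n < N-k+1$, where Corollary \ref{cor:conv-heat-1} offers nothing directly. The idea is to apply the discrete Duhamel representation derived in Section \ref{ssec:op-heat} to the error $e_m^n := U_m^n - U^n$: since $e_m^n$ satisfies the same iteration \eqref{eqn:BDF-iter-1} with zero initial data and zero source, the argument leading to \eqref{eqn:heat-umn} gives
$$e_m^n = -\kappa \sum_{i=1}^{\min(k,n)} E_\tau^{n-i} \sum_{j=i}^{k} \omega_j \bigl(e_m^{N+i-j} - e_{m-1}^{N+i-j}\bigr),\qquad n=1,\ldots,N.$$
Crucially, the indices $N+i-j$ on the right always lie in $\{N-k+1,\ldots,N\}$, so only the terminal errors appear. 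Using Lemma \ref{lem:sol-op} to bound $\|E_\tau^{n-i}\|_{H\to H} \le c_2$ and invoking the terminal-index estimate, we obtain
$$\|e_m^n\| \le c\,\kappa \sum_{j=0}^{k-1} \bigl(\|e_m^{N-j}\| + \|e_{m-1}^{N-j}\|\bigr) \le c\,\gamma(\kappa)^{m-1}$$
uniformly in $n$, which together with the BDF truncation bound yields the theorem.

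The key conceptual step, and the place where something genuinely needs to be said, is the observation that the waveform-relaxation perturbation only enters through the first $k$ equations of \eqref{eqn:BDF-iter-1} and pulls in the differences $e_{m}^{N-j}-e_{m-1}^{N-j}$ at the final $k$ indices. Consequently the representation \eqref{eqn:heat-umn} is the natural vehicle to transfer the terminal contraction supplied by Corollary \ref{cor:conv-heat-1} to all intermediate time levels via the exponentially decaying discrete solution operator $E_\tau^{n-i}$. No new smallness condition on $\kappa$ is needed beyond the one already imposed in Lemma \ref{lem:conv-heat}.
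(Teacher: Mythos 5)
Your proposal is correct and follows essentially the same route as the paper: the same splitting $U_m^n-u(t_n)=(U_m^n-U^n)+(U^n-u(t_n))$, Lemma \ref{lem:error-BDF} for the second piece, $m$-fold iteration of Corollary \ref{cor:conv-heat-1} for the terminal indices, and the representation \eqref{eqn:heat-umn} together with Lemma \ref{lem:sol-op} to propagate the terminal contraction to all $n$. Your write-up is in fact slightly more explicit than the paper's about why only the last $k$ indices enter the right-hand side of the error representation; the extra factor $\gamma(\kappa)^{m-1}$ versus $\gamma(\kappa)^m$ is harmless since it is absorbed into the generic constant.
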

\begin{proof}
We split the error into two parts:
$$ U_m^n - u(t_n) = (U_m^n - U^n) + (U^n - u(t_n)), $$
where $U^n$
is the solution to the $k$-step BDF scheme  \eqref{eqn:BDF}.
Note that the second term has the error bound  \eqref{eqn:err-BDF}. Meanwhile,
via \eqref{cor:conv-heat-1},  we have the estimate
 \begin{equation*}
\begin{aligned}
  \sum_{j=0}^{k-1} \|U^{N-j} - U_m^{N-j}\| \le \gamma(\kappa)^m \sum_{j=0}^{k-1} \| U^{N-j} -v\|
  \le \gamma(\kappa)^m \| v\| + \gamma(\kappa)^m \sum_{j=0}^{k-1} \| U^{N-j}\|
\end{aligned}
\end{equation*}
By the error estimate \eqref{eqn:err-BDF} and the assumption of data regularity \eqref{eqn:data}, we obtain that
$$\sum_{j=0}^{k-1} \| U^{N-j}\| \le c_T.$$
This, \eqref{eqn:heat-umn} and Lemma \ref{lem:sol-op}  lead to the estimate that
 \begin{equation*}
\begin{aligned}
 \|  U^{n} - U_m^{n} \| \le c \gamma(\kappa) \Big(\sum_{j=0}^{k-1} \|U^{N-j} - U_m^{N-j}\| + \sum_{j=0}^{k-1} \|U^{N-j} - U_{m-1}^{N-j}\| \Big)
 \le c  \gamma(\kappa)^m.
\end{aligned}
\end{equation*}
Then we obtain the desired result.
\end{proof}

\ZZ{
\begin{remark}\label{rem:choice-kappa-1}
For backward Euler method (BDF$1$), the convergence rate was proved to be \cite{GanderWu:2019}
$$  \gamma(\kappa) = \frac{\kappa e^{-c_1 T}}{1- \kappa e^{-c_1 T}} < \frac{\kappa  }{1- \kappa },  $$
So the iterative algorithm converges linearly by choosing $\kappa<1/2$, and the smaller parameter $\kappa$ leads to the faster convergence. However, in Section \ref{ssec:scheme-heat}, we have shown that the roundoff error is proportional to $O(\kappa^{-2})$,
so a tiny $\kappa$ may lead to a disastrous roundoff error.
Therefore one needs to choose a proper $\kappa\in(0,1/2)$ in order to
balance the roundoff error and the convergence rate.

For $k$-step BDF methods with $1<k\le 6$, we obtain a similar result
$$  \gamma(\kappa) = \frac{ c    \kappa e^{-c_1 T}}{1- c    \kappa e^{-c_1 T}} \le  \frac{c  \kappa  }{1- c \kappa},  $$
with an extra factor $c>1$.
This is due to the different stability estimate in Lemma \ref{lem:sol-op} of linear multistep methods.
Even though it is hard to derive an explicit bound of the generic constant $c$ for $k$-step BDF methods,
our empirical experiments show that the choice $\kappa\approx 0.1$  leads to an acceptable roundoff error ($\approx 10^{-12}$),
and meanwhile the convergence is very fast (see Fig. \ref{fig:eg1-kappa}).
 Note that the convergence rate is independent of $N$, so the increase
in the total number of steps
will not affect the robust convergence.\end{remark}}

\section{Parallel algorithms for nonlocal-in-time subdiffusion equations}\label{sec:frac}
In the section, we shall consider the subdiffusion equations \eqref{eqn:fde}, which involve a fractional-in-time derivative of
order $\alpha\in(0,1)$. The fractional-order differential operator is nonlocal, and its discretization  inherits
this nonlocality and looks like a multistep discretization with an infinitely wide stencil.
This motivates us to extend the argument established in Section \ref{sec:heat} to the subdiffusion equations \eqref{eqn:fde}.

\subsection{BDF$k$ scheme for subdiffusion equations}
To begin with, we discuss the development of a PinT algorithm for \eqref{eqn:fde}.
We apply the convolution quadrature (CQ) to discretize the fractional derivative  on uniform grids.
Following the same setting in Section \ref{ssec:scheme-heat}, let $\{t_n=n\tau\}_{n=0}^N$ be a uniform partition of the time interval $[0,T]$,
with a time step size $\tau=T/N$.

CQ was first proposed by Lubich \cite{Lubich:1986,
Lubich:1988} for discretizing Volterra integral equations. This approach provides a systematic framework
to construct high-order numerical methods to discretize fractional derivatives, and has been the
foundation of many early works.
%
%
Specifically, CQ approximates
the Riemann-Liouville derivative $^R\partial_t^\alpha \varphi(t_n)$ \ZZ{with $\alpha\in(0,1)$}, which is defined by
\begin{equation*}
   ^R\partial_t^\alpha \varphi := \frac{\d}{\d t}\frac{1}{\Gamma(1-\alpha)}\int_0^t(t-s)^{-\alpha}\varphi(s)\d s,
\end{equation*}
(with $\varphi(0)=0$) by a discrete convolution  (with the shorthand notation $\varphi^n=\varphi(t_n)$)
\begin{equation}\label{eqn:CQ}
\hat\partial_\tau^\alpha \varphi^n:=\frac{1}{\tau^\alpha}\sum_{j=0}^n \omega_j^{(\alpha)} \varphi^{n-j}.
\end{equation}
Here we consider the BDF$k$ method for example, then
the weights $\{\omega_j^{(\alpha)}\}_{j=0}^\infty$ are the coefficients in the power series expansion
\begin{equation}\label{eqn:delta}
\delta_k (\xi)^\alpha= \sum_{j=0}^\infty \omega_j^{(\alpha)} \xi ^j,
\end{equation}
where $\delta_k(\xi)$ is given by \eqref{eqn:bdf-gen}.
Generally, the weights $\{\omega_j^{(\alpha)}\}_{j=0}^{\infty}$ can be computed either by the fast
Fourier transform or recursion \cite{Podlubny:1999}. 

The next lemma provides a useful bound of the coefficients $\omega_j^{(\alpha)}$.

\begin{lemma}\label{lem:coeff}
The weights $\omega_n^{(\alpha)}$ satisfy the estimate that $|\omega_n^{(\alpha)}| \le c (n+1)^{-\alpha-1}$, where the constant $c$ only depends on $\alpha$ and $k$.
\end{lemma}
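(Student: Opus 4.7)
The plan is to represent $\omega_n^{(\alpha)}$ as a contour integral via Cauchy's formula and then estimate it by a Hankel-contour argument in a transformed variable, exploiting Lemma \ref{lem:delta}.

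First I would invoke Cauchy's integral formula on the defining power series \eqref{eqn:delta}: for sufficiently small $\rho > 0$,
\begin{equation*}
\omega_n^{(\alpha)} = \frac{1}{2\pi\i}\oint_{|\xi|=\rho} \delta_k(\xi)^\alpha\, \xi^{-n-1}\, \d\xi.
\end{equation*}
Under the change of variable $\xi = e^{-z}$, the small circle around $\xi=0$ transforms into a vertical segment of length $2\pi$ in the $z$-plane. Since for $k \le 6$ the BDF generator $\delta_k$ is zero-stable (its only root in the closed unit disk is the simple root at $\xi=1$, which corresponds to $z=0$), the integrand $\delta_k(e^{-z})^\alpha e^{nz}$ is analytic in the $z$-plane except along a branch cut issuing from $z=0$, inside the sector provided by Lemma \ref{lem:delta}. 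This allows me to deform the contour to a truncated Hankel contour
\begin{equation*}
\Gamma = \{re^{\pm\i\theta}: 1/(n+1) \le r \le \pi/\sin\theta\} \cup \{(n+1)^{-1} e^{\i\phi}: -\theta \le \phi \le \theta\},
\end{equation*}
with $\theta \in (\pi/2, \theta_\varepsilon)$ so that $\cos\theta < 0$, yielding
\begin{equation*}
\omega_n^{(\alpha)} = -\frac{1}{2\pi\i}\int_\Gamma \delta_k(e^{-z})^\alpha e^{nz}\,\d z.
\end{equation*}

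Next I would estimate the two pieces of $\Gamma$ separately using the key bound $|\delta_k(e^{-z})^\alpha| \le c\,|z|^\alpha$ from Lemma \ref{lem:delta}. On the two rays the integrand is controlled by $r^\alpha e^{nr\cos\theta}$, and setting $\sigma = -\cos\theta > 0$ the change of variable $u = nr\sigma$ gives
\begin{equation*}
\int_{1/(n+1)}^{\pi/\sin\theta} r^\alpha e^{-nr\sigma}\, \d r \le (n\sigma)^{-\alpha-1}\int_0^\infty u^\alpha e^{-u}\,\d u \le c(n+1)^{-\alpha-1}.
\end{equation*}
On the circular arc of radius $1/(n+1)$, the integrand is bounded by $c(n+1)^{-\alpha}$ and the arc length is $2\theta/(n+1)$, so this piece also contributes at most $c(n+1)^{-\alpha-1}$. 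Summing the two contributions gives the desired bound $|\omega_n^{(\alpha)}| \le c(n+1)^{-\alpha-1}$.

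The main technical obstacle is justifying the contour deformation from the original small circle to the Hankel contour $\Gamma$. One must verify that $\delta_k(\xi)^\alpha$ admits a single-valued analytic branch in the region swept out by the homotopy (which for $k \le 6$ follows from zero-stability of BDF$k$, since the only zero of $\delta_k$ in the closed unit disk is $\xi=1$) and that the return arcs at $|\text{Im}(z)| = \pi$ either cancel by $2\pi\i$-periodicity of $e^{-z}$ or contribute negligibly; both points can be handled by standard arguments once the sector from Lemma \ref{lem:delta} is exploited. The rest of the argument is a routine Hankel-contour estimate.
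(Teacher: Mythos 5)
Your proof is correct and follows essentially the same route as the paper's: Cauchy's formula for the coefficients, the substitution $\xi=e^{-z}$, deformation of the resulting vertical segment to a sectorial contour with opening angle $\theta\in(\pi/2,\theta_\varepsilon)$ (justified by periodicity and the fact that $z=0$ is the only singularity of $\delta_k(e^{-z})^\alpha$ in the relevant region), and then the bound $|\delta_k(e^{-z})|^\alpha\le c|z|^\alpha$ from Lemma \ref{lem:delta} together with $\int r^\alpha e^{nr\cos\theta}\,\d r\lesssim n^{-\alpha-1}$. The only cosmetic difference is that you use a truncated Hankel contour with a small arc of radius $1/(n+1)$, whereas the paper runs the wedge all the way into the origin (harmless since $r^\alpha$ is integrable there) and treats $\omega_0^{(\alpha)}$ separately.
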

\begin{proof}
The case of $k=1$ has been proved in \cite[Lemma 12]{JinZhou:iter}, by using the expression of the coefficients:
$\omega_n^{(\alpha)}=-\prod_{j=1}^n(1-\frac{1+\alpha}{j}).$
However, the closed forms of coefficients of high-order schemes are not available.
Here we provide a systematic proof for all BDF$k$ methods, $k=1,2,\ldots,6$.

By the definition of $\{\omega_j^{(\alpha)}\}$ and Cauchy's integral formula, we obtain that
\begin{equation*}
\begin{aligned}
 \omega_j^{(\alpha)} = \frac{1}{2\pi \i} \int_{|\xi|=1} \delta_k(\xi)^\alpha \xi^{-n-1} \,\d \xi
 = \frac{\tau^{1+\alpha}}{2\pi \i} \int_{  \Gamma_\tau} e^{z t_n }(\delta_k(e^{-z\tau})/\tau)^\alpha   \,\d z,
 \end{aligned}
\end{equation*}
where $\Gamma^\tau:=\{z=\i y: y\in\mathbb{R},\,|y|\leq \pi/\tau\}$.
 The analyticity together with the periodicity of the integrand
allows the deformation of the contour to
\begin{equation*}
  \Gamma_{\psi}^\tau =  \{z=r e^{\pm \i\psi}: 0\le r\le  \pi/\tau\sin \psi  \},
\end{equation*}
with $\psi \in (\pi/2,\pi)$. Then Lemma
\ref{lem:delta} implies for $n\ge1$
\begin{equation*}
\begin{aligned}
 |\omega_j^{(\alpha)}| & \le c \tau^{1+\alpha} \int_{\Gamma_{\psi }^\tau}  |e^{z t_n}| |\delta_k(e^{z\tau})|^\alpha   \,|\d z|
 \le c  \tau^{1+\alpha}  \int_0^{\frac{\pi}{\tau\sin\psi}} e^{ r t_n\cos\psi } r^\alpha \, \d \, r \\
 &\le c \tau^{1+\alpha} t_n^{-\alpha-1} \le c n^{-\alpha-1} \le 4 c (n+1)^{-\alpha-1}.
  \end{aligned}
\end{equation*}
This together with the uniform bound of $\omega_0^{(\alpha)}$ leads to the desired result.
\end{proof}\vskip5pt

Using the relation
$\partial_t^\alpha \varphi(t) = {^R\partial_t^\alpha}(\varphi-\varphi(0))$, see e.g.  \cite[p. 91]{KilbasSrivastavaTrujillo:2006},
the subdiffusion problem could be rewritten into the form
\begin{equation*}
  ^R\partial_t^\alpha(u-v) + A u = f.
\end{equation*}
Then the time stepping scheme based on the CQ for problem \eqref{eqn:fde} is to seek
approximations $U^n$ to the exact solution $u(t_n)$ by
\begin{equation}\label{eqn:BDF-CQ-0}
\hat \partial_\tau^\alpha  (U^n - v)  + A U^n = f(t_n),\quad n=1,\dots,N.
\end{equation}
By the definition of the discretized operator $\hat \partial_\tau^\alpha$ in \eqref{eqn:CQ}, we have
\begin{equation}\label{eqn:CQ-1}
\hat \partial_\tau^\alpha  (U^n - v) = \frac{1}{\tau^\alpha} \sum_{j=0}^n \omega_j^{(\alpha)}(U^{n-j} - v)
= \frac{1}{\tau^\alpha} \sum_{j=0}^\infty \omega_j^{(\alpha)} U^{n-j} =: \bar \partial_\tau^\alpha U^n,
\end{equation}
by setting the historical initial data
\begin{equation}\label{eqn:his-ini}
 U^{n} = v,\qquad \text{for all}\quad n\le 0.
\end{equation}
Then we reformulate the time stepping scheme \eqref{eqn:BDF-CQ-0}-\eqref{eqn:his-ini} by
\begin{equation} \label{eqn:BDF-CQ-1}
\begin{aligned}
\bar\partial_\tau ^\alpha U^{n} + AU^n &= f(t_n) \quad n = 1,2,\cdots, N,\\
U^{n}&=v,~\qquad n\le0.
\end{aligned}
\end{equation}
If the exact solution $u$ is smooth and has sufficiently many vanishing derivatives at $t=0$, then
the approximation $U^n$ converges at a rate of $O(\tau^k)$ uniformly in time $t$ \cite[Theorem 3.1]{Lubich:1988}. However, it
generally only exhibits a first-order accuracy when solving fractional evolution equations
 even for smooth $v$ and  $f$ \cite{CuestaLubichPalencia:2006,JinLiZhou:SISC2017}, because the
requisite compatibility conditions
$$ Av + f(0) = 0,\quad \text{and}\quad {\partial_t^\ell}f(0) = 0~~\forall \ell=1,2,\ldots,k,  $$
are usually not satisfied.
This loss of accuracy is one distinct feature for most time stepping schemes deriving
under the assumption that the solution $u$ is sufficiently smooth.

In order to restore the high-order convergence rate, we simply modify the starting steps
\cite{CuestaLubichPalencia:2006,JinLiZhou:SISC2017,LubichSloanThomee:1996,YanFord:2018}.
In particular, for $n \ge 1$, the CQ-BDF$k$ scheme seeks $U^n\in V$ such that
%
\begin{equation} \label{eqn:BDF-frac}
\begin{aligned}
\bar\partial_\tau ^\alpha U^{n} + AU^n &= f(t_n)
+ a_n^{(k)}  (f(0) -A v ) +
\sum_{\ell=1}^{k-2} b_{\ell,n}^{(k)}\tau^{\ell} \partial_t^{\ell}f(0)=:\bar f_n, \\
U^{n}&=v,\quad n\le0.
\end{aligned}
\end{equation}

The coefficients $a_n^{(k)}$ and $b_{\ell,n}^{(k)}$ have to be chosen appropriately (cf. Table \ref{tab:anbn}).
For $n\ge k$,  $a_n^{(k)}$ and $b_{\ell,n}^{(k)}$  are zero, then $\bar\partial_\tau^\alpha u(t_n)$ is the standard CQ-BDF$k$ scheme,
that approximates $\partial_t^\alpha u(t_n)$. 
Then there holds the following error estimate \cite[Theorem 2.1]{JinLiZhou:SISC2017}.

\ZZ{
\begin{lemma}\label{lema:frac-err}
If the initial data $v$ and forcing data $f$ satisfy
\begin{equation}\label{eqn:data-frac}
v\in H \quad  \text{and} \quad f \in W^{k,\frac1\alpha+\epsilon}(0,T;H) ~\text{with some}~\epsilon>0,
\end{equation}
then the time stepping solution $U^n$ to \eqref{eqn:BDF-frac} satisfies the following error estimate:
\begin{equation}\label{eqn:err-BDF-frac}
  \begin{aligned}
  \|U^n-u(t_n)\|  \leq & c\, \tau^k
  \bigg(t_n^{ -k }  \|v\|
  + \sum_{\ell=0}^{k -1 } t_n^{ \alpha+\ell -k }  \|\partial_t^{\ell}f(0)\| +\int_0^{t_n} (t-s)^{\alpha-1}\|\partial_s^{k}f(s)\|d s\bigg) ,
  \end{aligned}
\end{equation}
where the constant $c$ is independent of $\tau$ and $t_n$.
\end{lemma}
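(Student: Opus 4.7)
The plan is to proceed via Laplace transform and generating function techniques, mirroring the template developed for the parabolic case in Section \ref{ssec:op-heat}. First, I would represent the exact solution as $u(t_n) = F(t_n) v + \int_0^{t_n} E(t_n - s) f(s)\, ds$, where
$$E(t) = \frac{1}{2\pi \i} \int_\Gamma e^{zt}(z^\alpha + A)^{-1}\, dz, \qquad F(t) = \frac{1}{2\pi\i} \int_\Gamma e^{zt} z^{\alpha-1}(z^\alpha + A)^{-1}\, dz,$$
with $\Gamma$ a Hankel-type contour contained in the sector of analyticity of the resolvent $(z^\alpha + A)^{-1}$. Correspondingly, by applying the generating-function machinery of Section \ref{ssec:op-heat} to \eqref{eqn:BDF-frac} and inverting via Cauchy's formula, the discrete solution admits an analogous representation $U^n = F_\tau^n v + \tau \sum_{j=1}^n E_\tau^{n-j} \bar f_j$, in which $z$ is replaced by $\delta_k(e^{-z\tau})/\tau$ and the contour is deformed to $\Gamma_\theta^{\pi/\sin\theta}$ as permitted by Lemma \ref{lem:delta}.

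The homogeneous contribution $\|(F_\tau^n - F(t_n)) v\|$ is then controlled by the standard consistency estimate $|\delta_k(e^{-z\tau})/\tau - z| \leq c\tau^k |z|^{k+1}$ for $z\in\Gamma_\theta^{\pi/\sin\theta}$, combined with the resolvent bound $\|(z^\alpha + A)^{-1}\| \leq c|z|^{-\alpha}$ furnished by the coercivity \eqref{eqn:coe} and Lemma \ref{lem:delta}. Deforming to a Hankel path through $1/t_n$ and exploiting the exponential decay of $e^{zt_n}$ away from the peak yields the bound $c\tau^k t_n^{-k}\|v\|$, matching the first term in \eqref{eqn:err-BDF-frac}. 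For the source contribution I would Taylor expand $f(t) = \sum_{\ell=0}^{k-1} \tfrac{t^\ell}{\ell!}\partial_t^\ell f(0) + r(t)$ with $r$ having vanishing derivatives at $0$ and retaining $W^{k,1/\alpha+\epsilon}$ regularity. The contribution of $r$ is handled by the same contour bound together with Hölder's inequality in time (using $p>1/\alpha$ to ensure integrability of the kernel $(t_n-s)^{\alpha-1}$), producing the integral term in \eqref{eqn:err-BDF-frac}.

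The main obstacle is to show that the starting-step corrections $a_n^{(k)}$ and $b_{\ell,n}^{(k)}$ exactly annihilate the low-order errors generated by the polynomial part $\sum_{\ell=0}^{k-1} \tfrac{t^\ell}{\ell!} \partial_t^\ell f(0)$; without them one only obtains $O(\tau^\alpha)$, as observed classically in \cite{CuestaLubichPalencia:2006,JinLiZhou:SISC2017}. Concretely, for each monomial $t^\ell$ one must verify that the discrete convolution $\tau\sum_{j=1}^n E_\tau^{n-j}[t_j^\ell + \text{corrections}]$ reproduces the continuous Mittag-Leffler-type convolution $\int_0^{t_n}(t_n-s)^{\alpha-1} s^\ell/\Gamma(\alpha)\, ds$ up to $O(\tau^k)$. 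This boils down to an algebraic identity at the level of generating functions: expanding $\delta_k(\xi)^{-\alpha}$ about $\xi=1$ and matching coefficients up to order $k-1$ uniquely determines the constants in Table \ref{tab:anbn}. Once this matching is verified, assembling the three error components and invoking the uniform contour estimates completes the proof.
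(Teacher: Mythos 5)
The paper offers no proof of this lemma to compare against: the estimate is quoted directly from \cite[Theorem 2.1]{JinLiZhou:SISC2017}. Your sketch follows essentially the same route as the proof in that reference — contour/Laplace representation of $u$ versus the generating-function representation of $U^n$, the kernel consistency $|\delta_k(e^{-z\tau})/\tau-z|\le c\tau^k|z|^{k+1}$ combined with the resolvent bound $\|(z^\alpha+A)^{-1}\|\le c|z|^{-\alpha}$ for the $t_n^{-k}\|v\|$ term, Taylor expansion of $f$ with the remainder handled by H\"older (which is exactly where $f\in W^{k,1/\alpha+\epsilon}(0,T;H)$ is needed to tame the kernel $(t_n-s)^{\alpha-1}$), and verification that the starting corrections cancel the low-order quadrature errors of the polynomial part of the data. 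That architecture is sound and is what the cited proof does.

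One concrete claim in your outline is wrong and would derail the computation if followed literally: the correction constants are \emph{not} determined by ``expanding $\delta_k(\xi)^{-\alpha}$ about $\xi=1$.'' That expansion has $\alpha$-dependent coefficients, whereas the entries of Table \ref{tab:anbn} are independent of $\alpha$ — the same table serves the parabolic scheme \eqref{eqn:BDF} and the subdiffusion scheme \eqref{eqn:BDF-frac}. The reason is that the resolvent factor $(\delta_k(\xi)^\alpha/\tau^\alpha+A)^{-1}$ is already $O(\tau^k)$-consistent with $(z^\alpha+A)^{-1}$ by the kernel estimate; what the corrections must repair is the quadrature of the \emph{data}. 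For the constant term $f(0)-Av$ the requirement is
\[
\tau\Big(\frac{\xi}{1-\xi}+\sum_{n\ge1}a_n^{(k)}\xi^n\Big)\Big|_{\xi=e^{-z\tau}}=\frac1z+O(\tau^k|z|^{k-1}),
\quad\text{equivalently}\quad
\frac{\xi}{1-\xi}+\sum_{n\ge1}a_n^{(k)}\xi^n-\frac{1}{\delta_k(\xi)}=O\big((1-\xi)^{k-1}\big)\ \text{as }\xi\to1,
\]
and the $b_{\ell,n}^{(k)}$ come from the analogous $\alpha$-free matching of the generating function of $t_n^\ell$ against powers of $\delta_k(\xi)^{-1}$. (Check for $k=2$: $\delta_2(\xi)^{-1}-\tfrac{\xi}{1-\xi}=\tfrac12+O(1-\xi)$, giving $a_1^{(2)}=\tfrac12$ as in Table \ref{tab:anbn}.) With this replacement your outline reproduces the argument of \cite{JinLiZhou:SISC2017}; as written, the $\delta_k(\xi)^{-\alpha}$ matching would produce $\alpha$-dependent constants and contradict the table.
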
}

\subsection{Development of parallel-in-time scheme}\label{ssec:dev-heat}
In order to develop a parallel solver for the time stepping method \eqref{eqn:BDF-frac}, we apply the strategy developed in Section \ref{sec:heat}.
For given $U_{m-1}^n$, $\le n\le N$, we compute $U_{m}^n$ by
\begin{equation} \label{eqn:BDF-frac-iter}
\begin{aligned}
\bar\partial_\tau^\alpha  U_m^{n} + AU_m^n &=  \bar f_n, ~\quad\qquad\qquad\qquad\qquad n = 1,2,\ldots, N,\\
U_m^{\ZZ{-n}} &= v + \kappa(U_m^{\ZZ{N-n}} - U_{m-1}^{\ZZ{N-n}}),\quad n=0, 1, \ldots, N-1,\\
U_m^{n} &= v ,\qquad\qquad\qquad\qquad\qquad n\le -N,
\end{aligned}
\end{equation}
where the revised source term $\bar f_n$ is given in \eqref{eqn:BDF-frac}.
Note that  $\{U^n\}_{n=1}^N$, the exact time stepping solution to \eqref{eqn:BDF-frac}, is a fixed point of this iteration.
We shall examine  convergence  in Section \ref{ssec:conv-frac}.

Now we may rewrite the perturbed BDF$k$ scheme \eqref{eqn:BDF-frac-iter} in the following matrix form:
\begin{equation} \label{eq:BDF-k-matrix-frac}
\frac{1}{\tau^\alpha}(B_k(\kappa) \otimes I_x){\bf U}_m + (I_t \otimes A) {\bf U}_m = {\bf F}_{m-1},
\end{equation}
where ${\bf U}_m = (U_m^1, U_m^2, \cdots, U_m^N)^T$, ${\bf F}_{m-1}=(F_1,F_2,\cdots,F_N)^T$ with
\begin{equation} \label{eq:BDF-subdiff-F}
F_n =  \bar f_n + \frac{\kappa}{\tau^\alpha}\sum_{j=n}^{N-1}\omega_j U_{m-1}^{N+n-j} + \frac1{\tau^\alpha}\sum_{j=0}^{n-1}\omega_jv,
\end{equation}
and
$$
B_k(\kappa) =
\begin{bmatrix}
\omega_0 &  \kappa \omega_{N-1}&   \cdots &    \kappa \omega_2& \kappa \omega_1\\
\omega_1 & \omega_0 &      \cdots  & \kappa\omega_3 &  \kappa \omega_2 \\
\omega_2&\omega_1 &     \ddots& &\vdots  \\
 \vdots&\vdots & &   \omega_0 &      \kappa \omega_{N-1}\\
\omega_{N-1}& \omega_{N-2}& \cdots&    \omega_1     &  \omega_0\\
\end{bmatrix}.
$$

Similar as Lemma \ref{lem:diag-heat}, we have the following result.
\begin{lemma}[Diagonalization]\label{lem:diag-frac}
Let $\Lambda(\kappa) = \mathrm{diag}(1, \kappa^{-\frac1N}, \cdots,
\kappa^{-\frac{N-1}{N}})$, then
$$
B_k(\kappa) = S(\kappa) D_k(\kappa)
  S(\kappa)^{-1}, \quad S(\kappa) = \Lambda(\kappa)V,
$$
where $V$ is the Fourier matrix defined in \eqref{eqn:Fourier-matrix}.
\end{lemma}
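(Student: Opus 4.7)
The plan is to mimic the proof of Lemma \ref{lem:diag-heat}: first conjugate $B_k(\kappa)$ by the diagonal matrix $\Lambda(\kappa)$ to produce a genuine circulant matrix $\tilde B_k(\kappa) := \Lambda(\kappa)^{-1} B_k(\kappa) \Lambda(\kappa)$, and then invoke the classical fact that every $N\times N$ circulant matrix is diagonalized by the Fourier matrix $V$ defined in \eqref{eqn:Fourier-matrix}. The only structural difference compared with the parabolic case is that here $B_k(\kappa)$ is a full (not banded) matrix because the CQ weights $\omega_j^{(\alpha)}$ generally do not vanish for $j>k$, so the bookkeeping has to be done over all indices rather than just over a narrow stencil.

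First I would compute the generic entry $(\tilde B_k(\kappa))_{ij}$. Since $\Lambda(\kappa)$ is diagonal with $(\Lambda)_{ii}=\kappa^{-(i-1)/N}$, one immediately gets $(\tilde B_k(\kappa))_{ij} = \kappa^{(i-j)/N}(B_k(\kappa))_{ij}$. Splitting into the two cases from the definition of $B_k(\kappa)$: for $i\ge j$ one has $(B_k(\kappa))_{ij}=\omega_{i-j}^{(\alpha)}$, giving $(\tilde B_k(\kappa))_{ij}=\kappa^{(i-j)/N}\omega_{i-j}^{(\alpha)}$; for $i<j$ one has $(B_k(\kappa))_{ij}=\kappa\,\omega_{N+i-j}^{(\alpha)}$, giving $(\tilde B_k(\kappa))_{ij}=\kappa^{(N+i-j)/N}\omega_{N+i-j}^{(\alpha)}$. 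In both cases the entry depends only on $(i-j)\bmod N$, so $\tilde B_k(\kappa)$ is circulant with first column
\begin{equation*}
\bigl(\omega_0^{(\alpha)},\ \kappa^{1/N}\omega_1^{(\alpha)},\ \kappa^{2/N}\omega_2^{(\alpha)},\ \ldots,\ \kappa^{(N-1)/N}\omega_{N-1}^{(\alpha)}\bigr)^T.
\end{equation*}

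Next, using the standard spectral decomposition of circulant matrices, there exists a diagonal matrix $D_k(\kappa)$ with $\tilde B_k(\kappa) = V D_k(\kappa) V^{-1}$, and substituting back yields $B_k(\kappa)=\Lambda(\kappa)V D_k(\kappa)V^{-1}\Lambda(\kappa)^{-1}=S(\kappa)D_k(\kappa)S(\kappa)^{-1}$, which is the claimed factorization. As a by-product, one can read off that the $n$-th diagonal entry of $D_k(\kappa)$ equals $\sum_{j=0}^{N-1}\kappa^{j/N}\omega_j^{(\alpha)}\mathrm{e}^{-\mathrm{i} 2(n-1)j\pi/N}$, which by \eqref{eqn:delta} is a truncation of $\delta_k(\kappa^{1/N}\mathrm{e}^{-\mathrm{i} 2(n-1)\pi/N})^\alpha$; this identification will be useful later for the convergence and roundoff analysis of the PinT scheme, in parallel with \eqref{D2} in the parabolic case.

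There is no real obstacle here. The only mildly delicate point is verifying that the single $\kappa$ factor carried by every super-diagonal entry of $B_k(\kappa)$ is exactly what is needed so that, after conjugation by $\Lambda(\kappa)$, the wrapped weight $\omega_{N+i-j}^{(\alpha)}$ picks up precisely the power $\kappa^{(N+i-j)/N}$ required for circulant symmetry; this is the same mechanism underlying Lemma \ref{lem:diag-heat}, now applied to all off-diagonal indices rather than merely to the $k$ corner diagonals.
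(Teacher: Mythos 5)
Your proof is correct and follows exactly the route the paper intends (the paper states Lemma \ref{lem:diag-heat} with the explicit conjugated circulant $\tilde B_k(\kappa)$ and omits the proof of Lemma \ref{lem:diag-frac} as ``similar''): conjugation by $\Lambda(\kappa)$ turns the single $\kappa$ on each wrapped entry $\kappa\,\omega_{N+i-j}^{(\alpha)}$ into the power $\kappa^{(N+i-j)/N}$ needed for circulant symmetry, and the Fourier matrix then diagonalizes the result. Your identification of the diagonal entries of $D_k(\kappa)$ as the truncated series $\sum_{j=0}^{N-1}\kappa^{j/N}\omega_j^{(\alpha)}\mathrm{e}^{-\mathrm{i}2(n-1)j\pi/N}$ is also consistent with the eigenvalue formula the paper uses in its roundoff analysis.
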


The above lemma implies the parallel solver for \eqref{eqn:BDF-frac-iter}.

\begin{algorithm}[hbt!]
  \caption{~~PinT BDF$k$ scheme for subdiffusion.\label{alg:frac}}
  \begin{algorithmic}[1]
    \STATE Solve $(S(\kappa)\otimes I_x){\bf H} = {\bf F}_{m-1}$.
    \STATE Solve $({D}_k(\kappa)\otimes I_x + \tau^\alpha
  I_t\otimes A) {\bf Q} = \tau^\alpha {\bf H}$.
    \STATE Solve $(S(\kappa)^{-1}\otimes I_x) {\bf U}_m = {\bf Q}$.
  \end{algorithmic}
\end{algorithm}

\paragraph{Speedup analysis of Algorithm \ref{alg:frac}}
Due to the nonlocality of the fractional-order differential operator,
the discretized operator \eqref{eqn:CQ-1} requires the information of
all the previous steps.  \ZZ{In particular,  in the $n$-th step of CQ-BDF$k$ scheme,
we need to solve a poisson-like problem:
\begin{equation*}
(\omega_0^{(\alpha)} I + \tau^\alpha A) U^n = \sum_{j=1}^n \omega_j^{(\alpha)} ( U^0 -U^{n-j}) + \omega_0^{(\alpha)}  U^0 + \bar f^n.
\end{equation*}
The computation cost of this step is $O(nM+M_f)$. Then taking summation over $n$ from $1$ to $N$, we derive that the total computational cost of
 the direct implementation of
CQ-BDF$k$ scheme is $O(M N^2 +M_fN)$.}

Consider the parallelization of Algorithm \ref{alg:frac} with $p$ used
processors. Similar to the \ZZ{discussion} on Algorithm \ref{alg:heat}, the
cost of parallel FFT in {\bf Step 1} and {\bf Step 3} is
$O([MN\log(N)]/p)$. We check the computation cost of ${\bf F}_{m-1}$ in {\bf
Step 1}, which contains the following three components:
\eqref{eq:BDF-subdiff-F}:
\begin{enumerate}
\item The source term $\bar{f}_n$ defined in \eqref{eqn:BDF-frac}: The
correction is taken at first few steps and hence the computation
cost is $O((MN)/p)$.
\item The convolution term $\sum_{j=n}^{N-1} \omega_j U_{m-1}^{N+n-j}$
can be rewritten as the $n$-th entry of
$$
\begin{bmatrix}
  \omega_N & \omega_{N-1} & \cdots & \omega_2 & \omega_1 \\
  0 & \omega_N & \cdots & \omega_3 & \omega_2 \\
  \vdots   & \vdots & \ddots & \vdots & \vdots \\
  0 & 0 & \cdots & \omega_N & \omega_{N-1} \\
  0 & 0 & \cdots & 0 & \omega_N
\end{bmatrix}
\begin{bmatrix}
  U_{m-1}^1 \\ U_{m-1}^2 \\ \vdots \\ U_{m-1}^{N-1} \\ U_{m-1}^N
\end{bmatrix}
:= {\bf W}{\bf U}_{m-1}.
$$
Although ${\bf W}$ is not circulant, the above matrix can be extended
to be circulant for the purpose of using FFT algorithm. More
precisely, consider
$$
\begin{bmatrix}
  {\bf Z} & {\bf W} \\
  {\bf W} & {\bf Z}
\end{bmatrix}
\begin{bmatrix}
{\bf 0} \\ {\bf U}_{m-1}
\end{bmatrix}, \quad \text{with}
\quad
{\bf Z}:=
\begin{bmatrix}
  0 & 0 & \cdots & 0 & 0 \\
  \omega_1 & 0 & \cdots & 0 & 0 \\
  \vdots   & \vdots & \ddots & \vdots & \vdots \\
  \omega_{N-2} & \omega_{N-3} & \cdots & 0 & 0 \\
  \omega_{N-1} & \omega_{N-2} & \cdots & \omega_1 & 0
\end{bmatrix}.
$$
It can be easily seen that the extended matrix is circulant.  Thanks
again to the bulk synchronous parallel FFT algorithm \cite{IndaBisseling:2001}, the FFT
of the extended system and $[{\bf 0}, {\bf U}_{m-1}]^T$ lead to the
computation costs $O([N\log(N)]/p)$ and $O([MN\log(N)]/p)$,
respectively. By using the inverse FFT, the computation cost of the
convolution term $\sum_{j=n}^{N-1} \omega_j U_{m-1}^{N+n-j}$ turns out
to be $O([MN\log(N)]/p)$.
\item The convolution term $\sum_{j=0}^{n-1}\omega_jv$ can be computed
  with the cost $O([N\log(N) + MN]/p)$.
\end{enumerate}
For the {\bf Step 2}, the total computational cost is $O([\widetilde M_fN]/p)$.
To sum up, the overall cost for the parallel computation is $O([MN\log(N) + \widetilde{M}_f N]/p)$,
which becomes $O( M  \log N   + \widetilde{M_f} )$ if $p=O(N)$.\vskip5pt

\ZZ{Similar to the discussion on Algorithm \ref{alg:heat}, in order to attain desired accuracy,
the total computational cost is $O([MN(\log N)^2 + \widetilde{M}_f N \log N]/p)$ for each processor. }\vskip5pt

\ZZ{
\paragraph{Roundoff error of Algorithm \ref{alg:frac}}
In case that $I_x = 1$ and $A = \mu$, by using the same argument in
Section \ref{ssec:scheme-heat},
the roundoff error can be bounded by
 \begin{equation*}
\frac{\| {\bU}_m - {\hbU}_m  \|_2}{\| {\bU}_m \|_2}
\le  \epsilon (2N+1) \kappa^{-2} \Big(\Big[\frac{\delta_k (-1)}{\tau}\Big]^\alpha +\mu \Big)(\mu\sin(\max(\alpha(\pi-\theta_k),\pi/2)))^{-1}.
\end{equation*}
For $\mu\in[\mu_0,\infty)$ and $\alpha\in(0,1)$, we have the uniform estimate
\begin{equation}\label{eqn:rounderr-frac}
\begin{split}
\frac{\| {\bU}_m - {\hbU}_m  \|_2}{\| {\bU}_m \|_2} &\le   \epsilon (2N+1) \kappa^{-2} \Big(1+ \Big[\frac{\delta_k (-1)}{\tau}\Big]^\alpha /\mu_0 \Big)(\sin(\max(\alpha(\pi-\theta_k),\pi/2)))^{-1}\\
&\le C_{k} \epsilon \kappa^{-2} N^{1+\alpha},
\end{split}
\end{equation}
where the constant $C_{k}$  is
$$C_k = 3\Big(1+\Big[\frac{\delta_k (-1)}{T}\Big]^\alpha /\mu_0\Big)(\sin(\max(\alpha(\pi-\theta_k),\pi/2)))^{-1}.$$
Note that the constant $C_k$ depends on the order of BDF method, $\beta_0$ in \eqref{eqn:coe}, the terminal time $T$ and the fractional order $\alpha$.}

\subsection{Representation of numerical solution and convergence analysis} \label{ssec:conv-frac}
Next, we represent the solution of the time stepping scheme
\eqref{eqn:BDF-frac}  as a contour integral in complex domain.
\ZZ{Following the argument in Section \ref{ssec:op-heat},
the solution to the time stepping scheme \eqref{eqn:BDF-frac} can be written as
 \begin{equation}\label{eqn:solrep-frac}
\begin{aligned}
 U^n = (I+F_\tau^n)   v + \tau \sum_{j=1}^n E_\tau^{n-j}\bar f_j.
\end{aligned}
\end{equation}}
where the discrete operators $F_\tau^n$ and $E_\tau^n$ are respectively defined by
\begin{equation}\label{eqn:disc-sol-op-frac}
\begin{aligned}
F_\tau^n &= -\frac{1}{2\pi\i}\int_{|\xi|=\rho}   \frac{1}
  {\xi^n(1-\xi)} \Big( (\delta_k(\xi)/\tau)^\alpha  + A\Big)^{-1} A   \, \d\xi,\\
E_\tau^n &=\frac{1}{2\pi \tau \i}\int_{|\xi|=\rho}  \xi^{-n-1}
  \Big( (\delta_k(\xi)/\tau)^\alpha  +  A\Big)^{-1}   \, \d\xi.
\end{aligned}
\end{equation}

The following lemma provides the decay properties of the discrete solution operator.
The proof is standard, see, for example, \cite{LubichSloanThomee:1996, JinLiZhou:SISC2017}, and hence it is omitted.
\begin{lemma}\label{lem:sol-op-frac}
For the solution operators $E_\tau^n$ defined by \eqref{eqn:disc-sol-op-frac}, there holds that
$$ \| E_\tau^n \|_{H\rightarrow H} \le c t_{n+1}^{\alpha-1},\quad \forall ~~n=1,2,.\ldots,N.  $$
where the constant $c$ is independent of $\tau$ and $n$.
\end{lemma}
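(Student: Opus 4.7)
The plan is to bound the operator norm of $E_\tau^n$ directly from its contour integral representation \eqref{eqn:disc-sol-op-frac} by a change of variables and resolvent estimates. First I would perform the substitution $\xi = e^{-z\tau}$, which sends the circle $|\xi|=\rho$ (with $\rho = e^{-\tau}$, say) to the segment $\Gamma^{\tau}=\{z=\i y : |y|\le \pi/\tau\}$ and yields
\begin{equation*}
E_\tau^n = \frac{1}{2\pi\i}\int_{\Gamma^{\tau}} e^{z t_n}\,\Big((\delta_k(e^{-z\tau})/\tau)^\alpha + A\Big)^{-1}\,\d z,
\end{equation*}
using $\xi^{-n-1}\,\d\xi = -\tau e^{z t_n}\,\d z$ (up to the factor of $\tau$ in the denominator). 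By the analyticity and $2\pi\i/\tau$-periodicity of the integrand, I can deform $\Gamma^\tau$ to the truncated sector contour
\begin{equation*}
\Gamma_\theta^{\pi/\sin\theta} = \{z = r e^{\pm\i\theta} : 0\le r\le \pi/\sin\theta\}
\end{equation*}
for some $\theta\in(\pi/2,\theta_\varepsilon)$ supplied by Lemma \ref{lem:delta}.

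Next I would combine two key resolvent-type bounds. From Lemma \ref{lem:delta}, for $z\in\Gamma_\theta^{\pi/\sin\theta}$ we have $\delta_k(e^{-z\tau})\in\Sigma_{\pi-\vartheta_k+\varepsilon}$ together with $c_0|z|\le|\delta_k(e^{-z\tau})/\tau|\le c_0'|z|$. Raising to the power $\alpha\in(0,1)$ keeps the image in a sector $\Sigma_{\alpha(\pi-\vartheta_k+\varepsilon)}$, which by choosing $\theta$ close enough to $\pi/2$ lies strictly in the right half plane away from the negative real axis. Since $A$ is self-adjoint with spectrum in $[\beta_0,\infty)$, the spectral theorem gives the uniform resolvent estimate
\begin{equation*}
\Big\|\big((\delta_k(e^{-z\tau})/\tau)^\alpha + A\big)^{-1}\Big\|_{H\to H} \le \frac{C}{|(\delta_k(e^{-z\tau})/\tau)^\alpha|} \le \frac{C}{|z|^\alpha},
\end{equation*}
with $C$ depending only on $\alpha$, $\vartheta_k$ and $\theta$.

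Finally I would estimate the integral in operator norm. Parametrising $z = r e^{\pm\i\theta}$ with $\cos\theta<0$, we get
\begin{equation*}
\|E_\tau^n\|_{H\to H} \le \frac{C}{\pi}\int_0^{\pi/\sin\theta} e^{r t_n \cos\theta}\,r^{-\alpha}\,\d r,
\end{equation*}
and substituting $s = r t_n |\cos\theta|$ yields the bound $C\, t_n^{\alpha-1}$, which is equivalent to $c\, t_{n+1}^{\alpha-1}$ for $n\ge 1$. The main technical point will be verifying that $\alpha$ times the sector angle coming from Lemma \ref{lem:delta} still keeps $(\delta_k(e^{-z\tau})/\tau)^\alpha$ bounded away from $-\Sigma(A)$, so that the resolvent bound in $H$ (which holds only in a sector strictly excluding the negative real axis) is uniform in $z$; this is handled by choosing $\theta$ sufficiently close to $\pi/2$ that $\alpha(\pi-\vartheta_k+\varepsilon)<\pi$, which is possible since $\alpha<1$.
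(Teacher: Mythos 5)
Your proposal is correct, and it is essentially the same argument the paper has in mind: the paper omits the proof of this lemma and defers to the standard references \cite{LubichSloanThomee:1996,JinLiZhou:SISC2017}, whose proof is exactly your substitution $\xi=e^{-z\tau}$, contour deformation to a truncated sector, the sectorial bounds of Lemma \ref{lem:delta} raised to the power $\alpha$, the resolvent estimate for the self-adjoint positive operator $A$ on the sector $\Sigma_{\alpha(\pi-\vartheta_k+\varepsilon)}$, and the final Gamma-function integral giving $Ct_n^{\alpha-1}\le 2^{1-\alpha}Ct_{n+1}^{\alpha-1}$. The only cosmetic slip is that after the substitution the truncated sector should have radius $\pi/(\tau\sin\theta)$ rather than $\pi/\sin\theta$ (as in the proof of Lemma \ref{lem:coeff}), which is immaterial since you extend the radial integral to infinity anyway.
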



In this section, we aim to show the convergence of the iterative method \eqref{eqn:BDF-frac-iter}
by choosing an appropriate parameter $\kappa$. 
Equivalently, the  scheme \eqref{eqn:BDF-iter} could be reformulated as
 \begin{equation} \label{eqn:BDF-frac-iter-1}
\begin{aligned}
\bar\partial_\tau^\alpha  U_m^{n} + AU_m^n  &= \bar f_n- \frac{\kappa}{\tau^\alpha} G^n_m, \quad n = 1,2,\cdots, N,\\
U_m^{n} &= v,\quad n\le 0.
\end{aligned}
\end{equation}
where the term $G^n_m$ is given by
\begin{equation}\label{eqn:Gn-frac}
G^n_m =   \sum_{j=0}^{N-1} \omega_{n+j}^{(\alpha)} \left(U_m^{N-j} - U_{m-1}^{N-j}\right).
\end{equation}
We aim to show that $U_m^N$ 
 converges to $U^N$, the solution of CQ-BDF$k$ scheme \eqref{eqn:BDF-frac}, as $m\rightarrow\infty$.

 \begin{lemma}\label{lem:conv-frac}
 Let $U_m^n$ be the solution to the iterative algorithm \eqref{eqn:BDF-frac-iter} with $v=0$ and $\bar f_n=0$ for all $n=1,2,\ldots,N$.
 Then we can choose a proper parameter $\kappa=O(1/\log(N))$
 in \eqref{eqn:BDF-iter}, such that the following estimate holds valid:
 \begin{equation*}
    \| U_m^{n} \| \le c t_n^{\alpha-1}\gamma(\kappa)^m \Big(\tau\sum_{j=1}^{N} t_{j}^{-\alpha} \| U_{0}^{N-j+1}\|\Big).
 \end{equation*}
 \ZZ{Here $\gamma(\kappa)\in(0,1)$ might depend on $\alpha$, $\kappa$, $\beta_0$ and $T$}, but independent of $\tau$, $n$ and $m$.
 \end{lemma}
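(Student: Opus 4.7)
I would mirror the strategy of Lemma \ref{lem:conv-heat}, adapting it to the nonlocal fractional setting. Starting from the reformulation \eqref{eqn:BDF-frac-iter-1} with $v=0$ and $\bar f_n=0$, the iteration reduces to one with homogeneous history and source $-\kappa G_m^n/\tau^\alpha$. The solution representation \eqref{eqn:solrep-frac} therefore yields
\[
U_m^n = -\kappa\tau^{1-\alpha}\sum_{i=1}^n E_\tau^{n-i} G_m^i.
\]
Combining the triangle inequality in $G_m^i$ with Lemma \ref{lem:sol-op-frac} (so $\|E_\tau^{n-i}\|_{H\to H}\le c\, t_{n-i+1}^{\alpha-1}$) and Lemma \ref{lem:coeff} (so $|\omega_{i+q-1}^{(\alpha)}|\le c(i+q)^{-\alpha-1}$) gives
\[
\|U_m^n\| \le c\kappa\sum_{q=1}^N\bigl(\|U_m^{N-q+1}\|+\|U_{m-1}^{N-q+1}\|\bigr)\sum_{i=1}^n(n-i+1)^{\alpha-1}(i+q)^{-\alpha-1},
\]
after cancelling the compensating $\tau^{1-\alpha}$ and $\tau^{\alpha-1}$ factors.

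The key technical step is to estimate the inner double sum. Applying a weighted Young's inequality $q^\alpha i \le (q+i)^{\alpha+1}$ (with exponents $\alpha/(\alpha+1)$ and $1/(\alpha+1)$) produces $(i+q)^{-\alpha-1}\le q^{-\alpha}i^{-1}$, decoupling the indices. Then the single-variable convolution $\sum_{i=1}^n(n-i+1)^{\alpha-1}i^{-1}$ is bounded by splitting at $i=n/2$: the part $i>n/2$ uses $(n-i+1)^{\alpha-1}\le Cn^{\alpha-1}$ combined with a harmonic sum to give $O(n^{\alpha-1}\log n)$, while the part $i\le n/2$ uses $i^{-1}\le C/n$ and a fractional-power sum to give $O(n^{\alpha-1})$. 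Converting back via $n^{\alpha-1}=\tau^{1-\alpha}t_n^{\alpha-1}$ and $q^{-\alpha}=\tau^\alpha t_q^{-\alpha}$, and introducing the weighted norm $R_m:=\tau\sum_{j=1}^N t_j^{-\alpha}\|U_m^{N-j+1}\|$, I arrive at the pointwise bound
\[
\|U_m^n\|\le c\kappa\log N\cdot t_n^{\alpha-1}(R_m+R_{m-1}), \qquad 1\le n\le N.
\]

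To close the recursion, I multiply by $\tau t_{N-n+1}^{-\alpha}$ and sum over $n=1,\dots,N$; substituting $j=N-n+1$ the left-hand side becomes $R_m$, and the weight sum reduces to
\[
\tau\sum_{n=1}^N t_{N-n+1}^{-\alpha}t_n^{\alpha-1} = \sum_{n=1}^N (N-n+1)^{-\alpha}n^{\alpha-1} \le C_\alpha,
\]
a discrete Beta integral bounded by $\pi/\sin(\pi\alpha)$ independently of $N$. Hence $R_m\le c\kappa\log N\,(R_m+R_{m-1})$. Choosing $\kappa=O(1/\log N)$ small enough that $c\kappa\log N<1/2$ absorbs $R_m$ on the left and yields $R_m\le\gamma(\kappa)R_{m-1}$ with $\gamma(\kappa)=c\kappa\log N/(1-c\kappa\log N)\in(0,1)$. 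Iterating produces $R_m\le\gamma^m R_0$, and substituting back into the pointwise bound (together with $R_m+R_{m-1}\le 2R_{m-1}$) delivers the claimed estimate $\|U_m^n\|\le c\, t_n^{\alpha-1}\gamma^m R_0$.

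The chief obstacle is the logarithmic convolution estimate $\sum_{i=1}^n(n-i+1)^{\alpha-1}i^{-1}\le c\, n^{\alpha-1}\log N$: the interplay between the integrable singularity $(n-i+1)^{\alpha-1}$ near $i=n$ and the non-integrable singularity $i^{-1}$ near $i=1$ produces the $\log N$ factor, and this is precisely what forces the scaling $\kappa=O(1/\log N)$. All other pieces (the Beta-type constant $C_\alpha$, the decay of the coefficients $\omega_n^{(\alpha)}$, and the decay of $E_\tau^n$) are uniform in $N$ and contribute no further logarithmic loss.
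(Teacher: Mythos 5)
Your proof follows the paper's argument essentially step for step: the same representation $U_m^n=-\kappa\tau^{1-\alpha}\sum_{i=1}^nE_\tau^{n-i}G_m^i$, the same use of Lemmas \ref{lem:sol-op-frac} and \ref{lem:coeff}, the same weighted quantity $R_m=\tau\sum_{j}t_j^{-\alpha}\|U_m^{N-j+1}\|$, the same Beta-function bound to close the recursion, and the same absorption/iteration yielding $\gamma(\kappa)=c\kappa\log N/(1-c\kappa\log N)$. The only divergence is that you prove the convolution estimate $\sum_{i=1}^n(n-i+1)^{\alpha-1}i^{-1}\le c\,n^{\alpha-1}\log(n+1)$ inline (the paper cites \cite{JinZhou:iter} for it); your split at $i=n/2$ is the right idea, but the two halves are described with their estimates interchanged --- $(n-i+1)^{\alpha-1}\le Cn^{\alpha-1}$ holds on $i\le n/2$ (where it pairs with the harmonic sum), while $i^{-1}\le C/n$ holds on $i>n/2$ (where it pairs with the fractional-power sum).
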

\begin{proof}
By the equivalent formula \eqref{eqn:BDF-frac-iter-1} and the expression \eqref{eqn:solrep-frac}, we have
\begin{equation*}
\begin{aligned}
U_m^n &=- \kappa \tau^{1-\alpha} \sum_{i=1}^{n} E_\tau^{n-i}  G^{i}_m
=- \kappa \tau^{1-\alpha} \sum_{i=1}^{n} E_\tau^{n-i}
\sum_{j=0}^{N-1} \omega_{j+i}^{(\alpha)} (U_m^{N-j} -    U_{m-1}^{N-j}  ).
\end{aligned}
\end{equation*}
Now we take the $H$ norm in the above equality and apply Lemma \ref{lem:sol-op-frac} to obtain that
\begin{equation*}
\begin{aligned}
 \|U_m^n\|  &\le c\kappa \tau^{1-\alpha} \sum_{i=1}^{n} t_{n-i+1}^{\alpha-1}
 \sum_{j=0}^{N-1} |\omega_{j+i}^{(\alpha)}|~\| U_m^{N-j} -    U_{m-1}^{N-j} \|\\
  &\le c\kappa  \sum_{i=1}^{n} (n-i+1)^{\alpha-1}
 \sum_{j=0}^{N-1} |\omega_{j+i}^{(\alpha)}|~\| U_m^{N-j} -    U_{m-1}^{N-j} \|.
 \end{aligned}
\end{equation*}
Then Lemma \ref{lem:coeff} indicates that
\begin{equation}\label{eqn:err-frac-1}
\begin{aligned}
 \|U_m^n\| &\le c\kappa \sum_{i=1}^{n} (n-i+1)^{\alpha-1} \sum_{j=0}^{N-1}  (j+i)^{ -\alpha-1 }  \| U_m^{N-j} -    U_{m-1}^{N-j} \| \\
 &= c\kappa \sum_{j=0}^{N-1} \| U_m^{N-j} -    U_{m-1}^{N-j} \|  \sum_{i=1}^{n} (n-i+1)^{\alpha-1} (j+i)^{ -\alpha-1 } \\
 &\le c\kappa \tau^\alpha \sum_{j=0}^{N-1} t_{j+1}^{-\alpha} \| U_m^{N-j} -    U_{m-1}^{N-j} \| \sum_{i=1}^{n} (n-i+1)^{\alpha-1} (j+i)^{-1 }\\
 &\le c\kappa t_n^{\alpha-1} \ln(n+1) \Big(\tau\sum_{j=1}^{N} t_{j}^{-\alpha} \| U_m^{N-j+1} -    U_{m-1}^{N-j+1} \|  \Big).
\end{aligned}
\end{equation}
\ZZ{The last inequality follows from the estimate that \cite[Lemma 11]{JinZhou:iter}
$$\sum_{i=1}^{n} (n-i+1)^{\alpha-1} (j+i)^{-1 } \le c n^{\alpha-1} \ln(n+1).$$
Multiplying $\tau t_{N-n+1}^{-\alpha}$ on  \eqref{eqn:err-frac-1} and summing over $n$, we derive that for $\alpha \in (0,1)$
\begin{equation*}
\begin{aligned}
\tau \sum_{n=1}^N t_n^{-\alpha} \|U_m^{N-n+1}\| & \le   c\kappa \Big(\tau \sum_{n=1}^N t_{N-n+1} ^{-\alpha}
t_n^{\alpha-1} \ln(n+1)\Big) \Big(\tau\sum_{j=1}^{N} t_{j}^{-\alpha} \| U_m^{N-j+1} -    U_{m-1}^{N-j+1} \|  \Big)\\
&\le c\kappa \log(N)\Big( \sum_{n=1}^N (N-n+1) ^{-\alpha}
n^{\alpha-1} \Big)  \Big(\tau\sum_{n=1}^{N} t_{n}^{-\alpha} \| U_m^{N-n+1} -    U_{m-1}^{N-n+1} \|  \Big)\\
&\le c\kappa \log(N)  \Big(\tau\sum_{n=1}^{N} t_{n}^{-\alpha} \| U_m^{N-n+1} -    U_{m-1}^{N-n+1} \|  \Big)
\end{aligned}
\end{equation*}
where the constant $c$ in the second inequality depends on $\alpha$. In the last inequality, we use the fact that
\begin{equation*}
\begin{aligned}
\sum_{n=1}^N (N-n+1) ^{-\alpha} n^{\alpha-1} &\le \sum_{n=1}^N \int_{n-1}^n (N-n+1) ^{-\alpha} n^{\alpha-1} \,\d s
\le  \sum_{n=1}^N \int_{n-1}^n (N-s) ^{-\alpha} s^{\alpha-1} \,\d s \\
&= \int_{0}^N (N-s) ^{-\alpha} s^{\alpha-1} \,\d s
= \int_0^1 (1-s)^{-\alpha} s^{\alpha-1} \,\d s = B(\alpha,1-\alpha),\\
\end{aligned}
\end{equation*}
where $B(\cdot,\cdot)$ denotes the Beta function.}

Next, we apply triangle's inequality, chose $\kappa$ small enough such that $c\kappa \log(N)<1$, and hence derive that
\begin{equation}\label{eqn:err-frac-2}
\begin{aligned}
\tau \sum_{n=1}^N t_n^{-\alpha} \|U_m^{N-n+1}\|  &\le \frac{c\kappa \log(N)}{1-c\kappa \log(N)} \Big(\tau \sum_{n=1}^N t_n^{-\alpha} \|U_{m-1}^{N-n+1}\|\Big) .
\end{aligned}
\end{equation}
Finally, we define
\begin{equation}\label{kappa2}
 \gamma(\kappa) =  \frac{c\kappa \log(N)}{1-c\kappa \log(N)}.
\end{equation}
By choosing $\kappa$ such that $c\kappa \log(N) \in (0,1/2)$, we have $\gamma(\kappa)\in(0,1)$.
%
Therefore, by \eqref{eqn:err-frac-1}, we obtain that
\begin{equation*}
\begin{aligned}
 \|U_m^n\|  &\le c\kappa t_n^{\alpha-1} \log(N) \Big(\tau\sum_{j=1}^{N} t_{j}^{-\alpha} \| U_m^{N-j+1} -    U_{m-1}^{N-j+1} \|  \Big)\\
 &\le c\kappa t_n^{\alpha-1} \log(N) \Big(\tau\sum_{j=1}^{N} t_{j}^{-\alpha} \| U_m^{N-j+1}\|  +  \tau\sum_{j=1}^{N} t_{j}^{-\alpha}  \| U_{m-1}^{N-j+1} \|  \Big)\\
 &\le c\kappa t_n^{\alpha-1} \log(N) (1+\gamma(\kappa)) \tau\sum_{j=1}^{N} t_{j}^{-\alpha}  \| U_{m-1}^{N-j+1} \| \\
 &\le ct_n^{\alpha-1}   \gamma(\kappa) \Big( \tau\sum_{j=1}^{N} t_{j}^{-\alpha}  \| U_{m-1}^{N-j+1} \|\Big).
\end{aligned}
\end{equation*}
Then repeating the estimate \eqref{eqn:err-frac-2} leads to the desired result.
\end{proof}

 Then the stability result in Lemma \ref{lem:conv-frac} leads to the convergence.

 \begin{corollary}\label{cor:conv-frac-1}
 Let $U_m^n$ be the solution to the iterative scheme \eqref{eqn:BDF-frac-iter}, and $U^n$
 be the solution to the $k$-step BDF scheme  \eqref{eqn:BDF-frac}. Then we can choose a proper parameter $\kappa=O(1/\log(N))$
 in \eqref{eqn:BDF-frac-iter}, such that the following estimate holds valid:
\begin{equation*}
   \|U^n - U_m^{n}\| \le c t_n^{\alpha-1}\gamma(\kappa)^m \Big(\tau \sum_{j=1}^{N} t_j^{-\alpha}\| U^{N-j+1} - U_{0}^{N-j+1}\|\Big),\quad \forall ~~m\ge1.
\end{equation*}
 Here the convergence factor $\gamma(\kappa)$, given by \eqref{kappa2},
 might depend on $\kappa$, $\beta_0$ and $T$, but   independent of $\tau$, $n$ and $m$.
 \end{corollary}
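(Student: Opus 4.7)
My plan is to transfer the argument used for Corollary \ref{cor:conv-heat-1} to the fractional setting, reducing the claim to the stability bound Lemma \ref{lem:conv-frac} applied to the iteration error rather than to the iterates themselves.

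First, I would introduce the error sequence $e_m^n := U_m^n - U^n$, extended to $n \le 0$ by $e_m^n = 0$ (both the iterate $U_m^n$ and the BDF$k$ solution $U^n$ share the historical data $v$ for $n\le 0$). Since $\{U^n\}_{n=1}^N$ is a fixed point of the iteration \eqref{eqn:BDF-frac-iter}---both the modified source $\bar f_n$ and the historical data match---subtracting the equation for $U^n$ from that for $U_m^n$ in the equivalent form \eqref{eqn:BDF-frac-iter-1} produces a linear system for $e_m^n$ with vanishing source and vanishing historical initial data, namely
\begin{equation*}
\bar\partial_\tau^\alpha e_m^n + A e_m^n = -\frac{\kappa}{\tau^\alpha} \sum_{j=0}^{N-1} \omega_{n+j}^{(\alpha)} \bigl(e_m^{N-j} - e_{m-1}^{N-j}\bigr), \quad 1 \le n \le N,
\end{equation*}
together with $e_m^n = 0$ for $n \le 0$. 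This is precisely the situation covered by Lemma \ref{lem:conv-frac}, with the role of $U_\ell^n$ there played by $e_\ell^n$ here.

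Next, I would invoke Lemma \ref{lem:conv-frac} directly on $\{e_m^n\}$ with the same choice $\kappa = O(1/\log N)$ guaranteeing $\gamma(\kappa)\in(0,1)$, obtaining
\begin{equation*}
\|e_m^n\| \le c\, t_n^{\alpha-1}\, \gamma(\kappa)^m \Bigl(\tau \sum_{j=1}^{N} t_j^{-\alpha} \|e_0^{N-j+1}\|\Bigr),
\end{equation*}
and then substituting $e_0^{N-j+1} = U_0^{N-j+1} - U^{N-j+1}$ recovers the stated estimate.

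The one point that must be verified carefully is that the error equation is genuinely homogeneous, i.e.\ that $\bar f_n$ and the historical data $v$ really do cancel under subtraction. This is immediate from linearity of both the CQ convolution $\bar\partial_\tau^\alpha$ and the spatial operator $A$, combined with the observation that the coupling term $G_m^n$ in \eqref{eqn:Gn-frac} depends only on the \emph{differences} $U_m^{N-j} - U_{m-1}^{N-j}$. Thus there is no genuine obstacle at the level of the corollary: all the technical content---the decay bound on $E_\tau^n$ from Lemma \ref{lem:sol-op-frac}, the pointwise estimate $|\omega_j^{(\alpha)}| \le c(j+1)^{-\alpha-1}$ from Lemma \ref{lem:coeff}, and the combinatorial estimate producing the $\log N$ factor responsible for the choice $\kappa=O(1/\log N)$---has already been absorbed into the proof of Lemma \ref{lem:conv-frac}, so the corollary follows in essentially one line.
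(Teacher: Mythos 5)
Your proposal is correct and follows essentially the same route as the paper: the paper's proof also sets $e_m^n = U_m^n - U^n$, uses the fixed-point property of $\{U^n\}$ to obtain the homogeneous error iteration with vanishing source and historical data, and then cites Lemma \ref{lem:conv-frac} applied to $e_m^n$. Your additional check that $\bar f_n$ and $v$ cancel under subtraction is a sound (if implicit in the paper) verification, but introduces nothing beyond the paper's argument.
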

 \begin{proof}
 To this end, we let $e_m^n = (U_m^n - U^n)$ with $1\le n\le N$, and note the fact that $\{U^n\}_{n=1}^N$ is
 the fixed point of the iteration \eqref{eqn:BDF-iter}. Therefore $e_m^n$ satisfies
\begin{equation} \label{eqn:err-heat-1}
\begin{aligned}
\bar\partial_\tau ^\alpha e_m^{n} + Ae_m^n  &= - \frac{\kappa}\tau K^n_m , \quad n = 1,2,\cdots, N,\\
e_m^{-j} &=  0 ,\quad j=0,1,\ldots,k-1.
\end{aligned}
\end{equation}
where the term $K_m^n$ is given by
\begin{equation*}
\begin{aligned}
K^n_m =
\sum_{j=0}^{N-1} \omega_{n+j} (e_m^{N-j} -    e_{m-1}^{N-j}  ).
\end{aligned}
\end{equation*}
Then the convergence estimate follows immediately from Lemma \ref{lem:conv-frac}.
\end{proof}

Combining Corollary \ref{cor:conv-frac-1} with the estimate \eqref{eqn:err-BDF-frac}, we have the following error estimate of the iterative scheme
\eqref{eqn:BDF-frac-iter}.
\begin{theorem}\label{thm:conv-frac}
Suppose that the condition \eqref{eqn:coe} and the assumption of data regularity \eqref{eqn:data-frac} hold true.
Let $U_m^n$ be the solution to the iterative algorithm \eqref{eqn:BDF-frac-iter}
with the initial guess $U_0^n = v$ for all $0 \le n\le N$, and $u$
be the exact solution to the subdiffusion equation \eqref{eqn:fde}. Then for $1\le n\le N$,  we have
\begin{equation*}
 \begin{aligned}
   \|U_m^n - u(t_n)\| &\le c(\gamma(\kappa)^m t_n^{\alpha-1} + \tau^kt_n^{-k}),\quad \text{with}~~\kappa = O(1/\log(N)),
 \end{aligned}
\end{equation*}
Here constant $c$ and the convergence factor  $\gamma(\kappa)$  given by \eqref{kappa2}
 might depend on $k$, $\kappa$, $\beta_0$, $T$, $v$ and $f$,
but they are   independent of $\tau$, $n$, $m$ and $u$.
\end{theorem}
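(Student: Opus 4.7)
The plan is to mirror the proof of Theorem~\ref{thm:conv} in the parabolic case, splitting the total error into a discretization part and an iteration part:
\[
U_m^n - u(t_n) = \bigl(U_m^n - U^n\bigr) + \bigl(U^n - u(t_n)\bigr),
\]
where $U^n$ denotes the solution of the CQ-BDF$k$ scheme \eqref{eqn:BDF-frac}. The second term is controlled directly by the a priori error estimate \eqref{eqn:err-BDF-frac} from Lemma~\ref{lema:frac-err}, which under the data regularity \eqref{eqn:data-frac} yields $\|U^n - u(t_n)\| \le c\,\tau^k t_n^{-k}$. This is the easier half and requires no new work.

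For the iteration error, I would invoke Corollary~\ref{cor:conv-frac-1} with the starting guess $U_0^n = v$, giving
\[
\|U^n - U_m^n\| \le c\, t_n^{\alpha-1}\,\gamma(\kappa)^m \Bigl(\tau \sum_{j=1}^{N} t_j^{-\alpha}\|U^{N-j+1} - v\|\Bigr),
\]
with $\kappa = O(1/\log N)$ so that $\gamma(\kappa)\in(0,1)$. It therefore remains to bound the weighted sum on the right by a constant independent of $\tau$ and $N$. Using the triangle inequality, $\|U^{N-j+1}-v\|\le \|U^{N-j+1}\|+\|v\|$; the uniform bound $\sup_{1\le n\le N}\|U^n\|\le c$ follows from the solution representation \eqref{eqn:solrep-frac}, the stability estimate in Lemma~\ref{lem:sol-op-frac}, and the data regularity \eqref{eqn:data-frac}. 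Consequently
\[
\tau \sum_{j=1}^{N} t_j^{-\alpha}\|U^{N-j+1} - v\| \le c\,\tau \sum_{j=1}^{N} t_j^{-\alpha} = c\,\tau^{1-\alpha}\sum_{j=1}^{N} j^{-\alpha} \le c\,T^{1-\alpha},
\]
where the last step uses the standard bound $\sum_{j=1}^{N} j^{-\alpha}\le c N^{1-\alpha}$ for $\alpha\in(0,1)$. Substituting this into the corollary gives $\|U^n - U_m^n\| \le c\,\gamma(\kappa)^m t_n^{\alpha-1}$, and adding the two error contributions yields the claim.

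The main potential obstacle is the bound on the weighted sum $\tau \sum_{j=1}^{N} t_j^{-\alpha}\|U^{N-j+1}-v\|$: one needs that even at the earliest time levels (where $t_j^{-\alpha}$ blows up) the factor $\|U^{N-j+1}-v\|$ does not itself amplify. This is why I would first establish the uniform-in-$n$ boundedness of $\|U^n\|$ via \eqref{eqn:solrep-frac} and Lemma~\ref{lem:sol-op-frac} rather than trying to exploit cancellation between $U^{N-j+1}$ and $v$. Apart from this, the argument is a direct assembly of Lemma~\ref{lema:frac-err}, Corollary~\ref{cor:conv-frac-1}, and the elementary sum estimate, exactly parallel to the proof of Theorem~\ref{thm:conv}.
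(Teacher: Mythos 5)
Your proposal is correct and follows essentially the same route as the paper: the same splitting into discretization and iteration errors, with the first part handled by Lemma~\ref{lema:frac-err} and the second by Corollary~\ref{cor:conv-frac-1} together with a uniform bound on $\|U^n\|$ and the elementary estimate $\tau\sum_{j=1}^N t_j^{-\alpha}\le cT^{1-\alpha}$. The only cosmetic difference is that you derive the uniform boundedness of $\|U^n\|$ from the solution representation \eqref{eqn:solrep-frac} and Lemma~\ref{lem:sol-op-frac}, whereas the paper reads it off from the error estimate \eqref{eqn:err-BDF-frac}; both are valid.
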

\begin{proof}
We split the error into two parts:
$$ U_m^n - u(t_n) = (U_m^n - U^n) + (U^n - u(t_n)).  $$
The second term has the error bound by \eqref{eqn:err-BDF-frac}. Meanwhile,
via Corollary \ref{cor:conv-frac-1}, the first component converges to zero as $m\rightarrow0$ and we have the estimate
 \begin{equation*}
\begin{aligned}
  \|U^n - U_m^{n}\| &\le c t_n^{\alpha-1}\gamma(\kappa)^m \Big(\tau \sum_{j=1}^{N} t_j^{-\alpha} \| U^{N-j+1} - v\|\Big)\\
  &\le c t_n^{\alpha-1} \gamma(\kappa)^m \Big(\| v\| +  \tau \sum_{j=1}^{N} t_j^{-\alpha}\| U^{N-j+1}\|\Big)
\end{aligned}
\end{equation*}
Note that the estimate \eqref{eqn:err-BDF-frac} and the assumption of data regularity \eqref{eqn:data-frac} implies the uniform bound of $U^n$ for all $n=1,2,\ldots,N$, we obtain that
 \begin{equation*}
\begin{aligned}
   \|U^N - U_m^{N}\|  \le c_T  t_n^{\alpha-1} \gamma(\kappa)^m .
\end{aligned}
\end{equation*}
Then the desired result follows immediately.
\end{proof}

\begin{remark}
By the expression of the convergence factor $\gamma(\kappa)$ in \eqref{kappa2},
we expect that the iteration converges linearly when $c\kappa \log(N) \in (0,1/2)$, i.e., $\kappa < 1/(2c\log N)$.
Besides, it implies that the convergence rate might deteriorate slightly for a large $N$ and a fixed $\kappa$.
Surprisingly, our numerical results indicate that the iteration converges robustly even for relatively large $\kappa$ (cf. Figure \ref{fig:eg2-kappa}),
and the step number $N$ seems not affect the convergence rate  (cf. Figure \ref{fig:eg2-tau}).
\end{remark}

\section{Numerical Tests}\label{sec:numerics}
In this section, we present some numerical results to illustrate and
complement our theoretical findings.  The computational domain is the
unit interval $\Omega = (0,1)$ for the Example 1 and Example 2, and
the unit square $\Omega = (0,1)^2$ for the Example 3.  In space, it is
discretized with piecewise linear Galerkin finite element method on a
uniform mesh with mesh size $h$ for one-dimensional problems.
\ZZ{For two-dimensional problems, we compute numerical solutions on a uniform triangulation with mesh size $h$.}
We focus on the convergence behavior
of the iterative solver to the BDF$k$ solution, since the temporal
convergence of BDF$k$ scheme has been theoretically studied and numerically examined in
\cite{JinLiZhou:SISC2017}. That is, with the fixed time step size
$\tau = T/N$, we measure the error in the $m$-th iteration $$e_m^N :=
\|U_m^N - U^N\|_{L^2(\Omega)},$$ where we take the BDF$k$ solution
$U^N$ as the reference solution.

\subsection{Numerical results for normal diffusion} ~

\begin{example}[1D diffusion equation]\label{Example1}
{\upshape We begin with the following one-dimensional normal diffusion
equation:
\begin{equation}\label{eqn:linear-parabolic}
    \left\{\begin{aligned} \partial_t  u  -   \partial_{xx} u  &=
      f(x,t),&&
    \mbox{in}\,\,\,\Omega\times(0,T],\\
 u &=0,&&
\mbox{on}\,\,\,\partial\Omega\times(0,T] , \\
    u(0) &= v , && \mbox{in}\,\,\,\Omega,
   \end{aligned} \right.
\end{equation}
where $\Omega=(0,1)$ and $T = 0.5$. We consider the following problem data
\begin{equation*}
  v(x) = \chi_{(0,\frac{1}{2})}(x)\quad \text{and} \quad f(x,t) = e^t \cos(x),
\end{equation*}
where $\chi$ denotes the characteristic function.

First, we check the performance of the algorithm for different
PinT BDF$k$ schemes. Taking $\kappa = 0.5$ and $\tau =
T/100$, the numerical results using the Algorithm \ref{alg:heat} with
different orders of BDF schemes are presented in Table
\ref{tab:BDFk-heat}. It can be seen that all the PinT
BDF$k$ schemes ($k \leq 6$) converge fast  in a similar manner.  In
what follows, we take the PinT BDF$3$ scheme to check
the influence of different $N$ (or $\tau$) and $\kappa$.

\begin{table}[htb!]
\caption{Example \ref{Example1}:  $e^N_m$ for $T = 0.5$, $\tau = T/100$,
  $h = 1/1000$ and $\kappa = 0.5$.}
\label{tab:BDFk-heat}
\centering
\begin{tabular}{|c|cccccc|}
\hline
 $m\backslash$ BDF$k$ & $k=1$ & $k=2$ & $k=3$ & $k=4$ & $k=5$ & $k=6$\\
\hline
0 & 1.20e-01 & 1.20e-01 & 1.20e-01 & 1.20e-01 & 1.20e-01 & 1.20e-01\\

1 &4.88e-04 & 4.43e-04 & 4.44e-04 & 4.44e-04 & 4.44e-04 & 4.44e-04\\

2 &1.98e-06 & 1.59e-06 & 1.60e-06 & 1.60e-06 & 1.60e-06 & 1.60e-06\\

3 & 8.05e-09 & 5.72e-09 & 5.79e-09 & 5.79e-09 & 5.79e-09 & 5.78e-09\\

4 & 2.81e-11 & 2.37e-11 & 1.98e-11 & 1.74e-11 & 1.92e-11 & 1.99e-11\\

5 & 4.76e-12 & 3.09e-12 & 1.11e-12 & 3.54e-12 & 1.81e-12 & 1.10e-12\\
\hline
\end{tabular}
\end{table}

Taking $\kappa = 0.5$, we report the convergence histories with
different time step sizes in Figure \ref{fig:eg1-tau}. It is seen that
the converence rate is independent of $\tau$, which agrees well with
the Corollary \ref{cor:conv-heat-1}. In Figure \ref{fig:eg1-kappa} we
plot the convergence histories with different $\kappa$. It can be seen
that, with the decrease of $\kappa$, the convergence becomes faster,
which is in agreement with the convergence rate in theory
\eqref{eq:gamma-heat}. On the other hand, the smaller  $\kappa$ will
lead to larger roundoff error, as we proved in Section \ref{ssec:dev-heat}.  Hence, one needs to choose $\kappa$
properly to balance the convergence rate and roundoff error.

\begin{figure}[!htbp]
\centering
\captionsetup{justification=centering}
\subfloat[$\kappa = 0.5$, influence of $\tau = T/N$]{
  \includegraphics[width=0.4\textwidth]{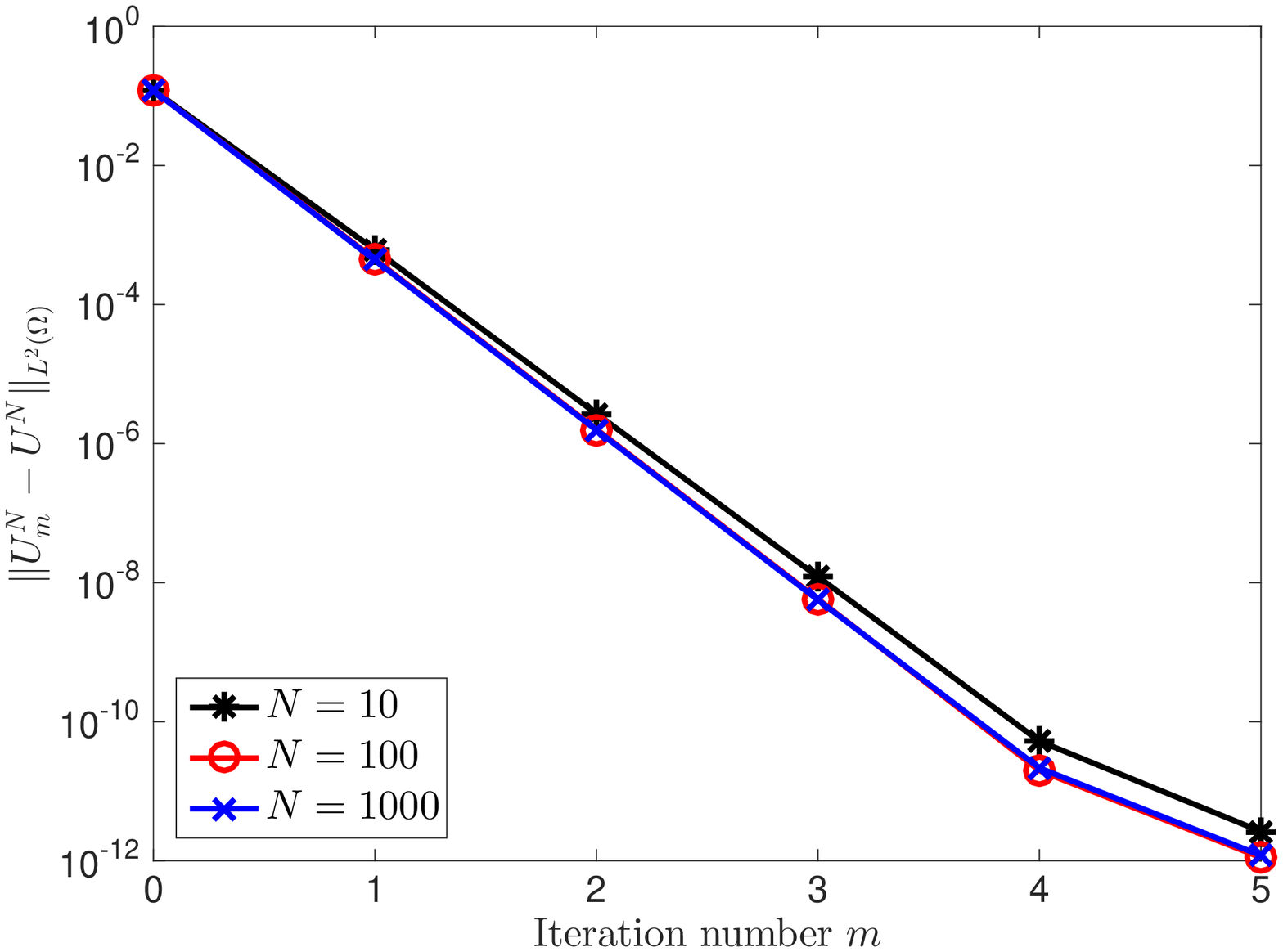}
  \label{fig:eg1-tau}
} %
\subfloat[$\tau=T/200$, influence of $\kappa$]{
  \includegraphics[width=0.4\textwidth]{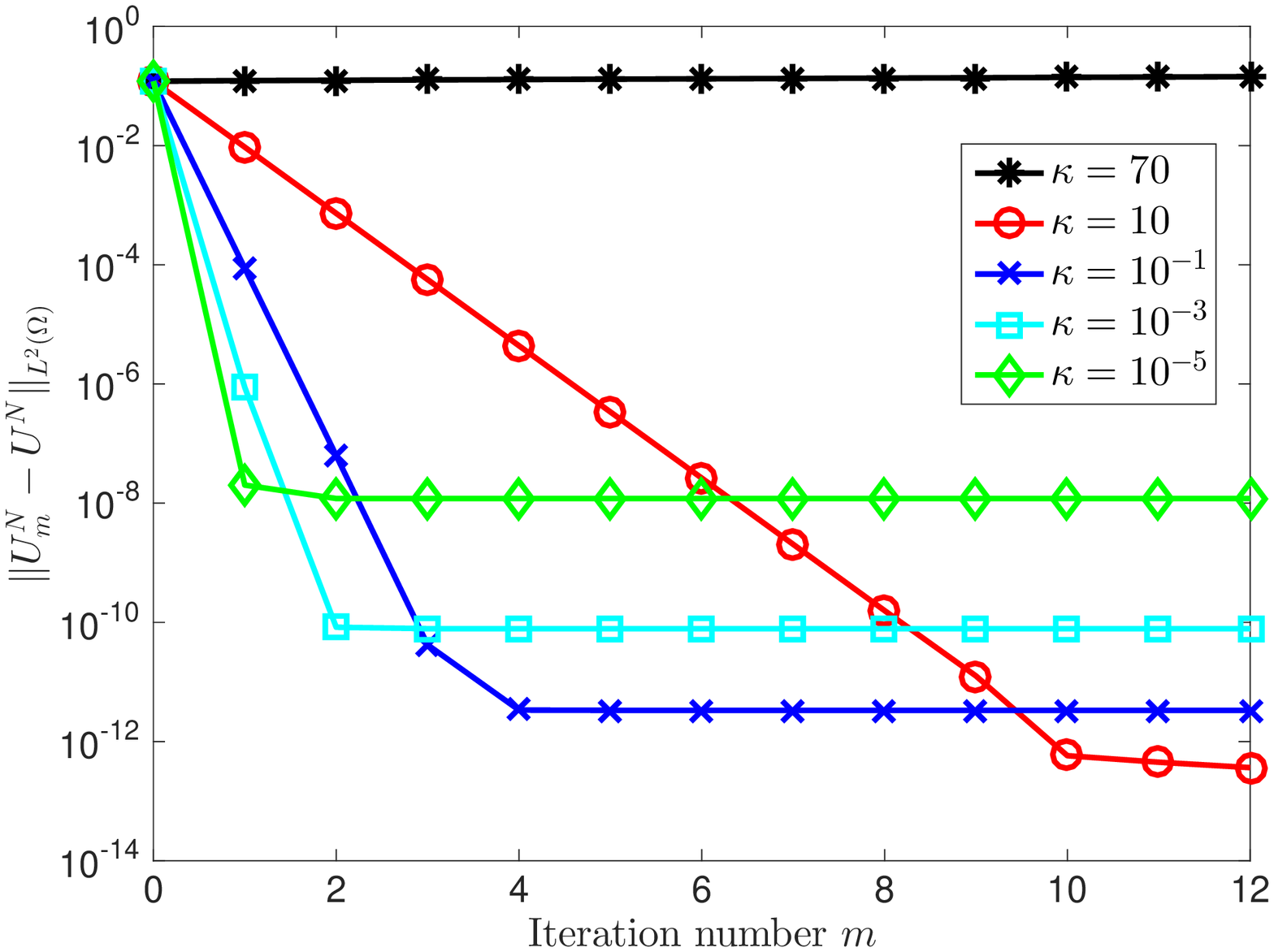}
  \label{fig:eg1-kappa}
}
\caption{PinT BDF3 for Example \ref{Example1}:  $T  = 0.5$ and $h = 1/1000$.}
\label{fig:eg1}
\end{figure}
}
\end{example}

%

\subsection{Numerical results for subdiffusion}
In this subsection, we test the performance of the algorithm for the
subdiffusion problem in both 1D and 2D:
\begin{equation}\label{eqn:linear-frac}
    \left\{\begin{aligned} \Dal  u  -  \Delta u  &= f(x,t),&&
    \mbox{in}\,\,\,\Omega\times(0,T],\\
 u &=0,&&
\mbox{on}\,\,\,\partial\Omega\times(0,T] , \\
    u(0) &= v , && \mbox{in}\,\,\,\Omega.
   \end{aligned} \right.
\end{equation}

\vskip5pt
\begin{example}[1D subdiffusion equation]\label{Example2}
{\upshape
In the one-dimensional problem, the computational domain is $\Omega =
(0,1)$ with equally spaced mesh. The mesh size is set to be $h = 1/M$
with $M = 1000$. We consider the following problem data
$$
v = \delta_{1/2}(x) \quad \text{and} \quad f(x,t) = 0.
$$
Here the initial data is the Dirac-delta measure concentrated at $x=\frac12$, which only belongs to
$H^{-\frac12-\epsilon}(\Omega)$ for any $\epsilon>0$. In the computation, the initial value is set to
be the $L^2-$projection of delta function; see some details in  \cite{Jin:weak}.

Even though the initial condition is very weak,
the inverse inequality and the analysis in \cite{Jin:weak, JinLiZhou:SISC2017}
implies the error estimate
$$  \| U_n - u(t_n) \| \le c (\tau^k h^{-\frac12-\epsilon} t_n^{-k}  +  h^{\frac32-\epsilon} t_n^{-\alpha}),$$
and also the following convergence result
$$  \| U_n - U_n^m \| \le c t_n^{\alpha-1} h^{-\frac12-\epsilon} \gamma(\kappa)^m$$
with the same $\gamma(\kappa)$ defined in \eqref{kappa2}.


\ZZ{
Similar to the normal diffusion, the performance of all the
PinT BDF$k$ ($k \leq 6$) have the same convergence
profile, see Table \ref{tab:BDFk-frac}. Moreover, we check the
influence of different $N$ (or $\tau$) and $\kappa$ by using the
PinT BDF3 scheme.
Our theoretical result \eqref{kappa2} indicates that the convergence factor is
\begin{equation*}
 \gamma(\kappa) =  \frac{c\kappa \log(N)}{1-c\kappa \log(N)},
\end{equation*}
with some generic constant $c>1$. So we expect that the iteration converges linearly when $c\kappa \log(N) \in (0,1/2)$, i.e., $\kappa < 1/(2c\log N)$.
Besides, it implies that the convergence rate might deteriorate slightly for a large $N$  and a fixed $\kappa$.
Surprisingly, our numerical results indicate that the iteration converges robustly even for relatively large $\kappa$ (cf. Figure \ref{fig:eg2-kappa}),
and the step number $N$ seems not affect the convergence rate  (cf. Figure \ref{fig:eg2-tau}).
From Figure
\ref{fig:eg2-kappa}, we observe that the influence of $\kappa$ is
similar to the normal diffusion case: the smaller $\kappa$ will lead
to faster convergence rate but worse roundoff error. In practice,
 the choice $\kappa\approx 10^{-1}$  leads to an acceptable roundoff error ($\approx 10^{-11}$),
and meanwhile the convergence is fast. }

\begin{table}[htb!]
\caption{Example \ref{Example2}:   $e^N_m$ for $T = 0.1$, $\alpha = 0.5$, $\tau =
  T/100$,
  $h = 1/1000$ and $\kappa = 0.1$.}
\label{tab:BDFk-frac}
\centering
\begin{tabular}{|c|cccccc|}
\hline
 $m\backslash$ BDF$k$ & $k=1$ & $k=2$ & $k=3$ & $k=4$ & $k=5$ & $k=6$\\
\hline
0 & 2.46e-01 & 2.46e-01 & 2.46e-01 & 2.46e-01 & 2.46e-01 & 2.46e-01 \\

1 &6.31e-04 & 6.28e-04 & 6.28e-04 & 6.28e-04 & 6.28e-04 & 6.28e-04\\

2 &2.88e-06 & 2.85e-06 & 2.85e-06 & 2.84e-06 & 2.85e-06 & 2.85e-06\\

3 & 1.34e-08 & 1.32e-08 & 1.32e-08 & 1.32e-08 & 1.33e-08 & 1.31e-08\\

4 & 8.12e-11 & 4.79e-11 & 4.98e-11 & 8.31e-11 & 4.27e-11 & 1.37e-10\\

5 & 1.88e-11 & 8.47e-12 & 1.86e-11 & 1.61e-11 & 3.44e-11 & 1.50e-10\\
\hline
\end{tabular}
\end{table}

\begin{figure}[!htbp]
\centering
\captionsetup{justification=centering}
\subfloat[$\kappa = 0.1$, influence of $\tau = T/N$]{
  \includegraphics[width=0.35\textwidth]{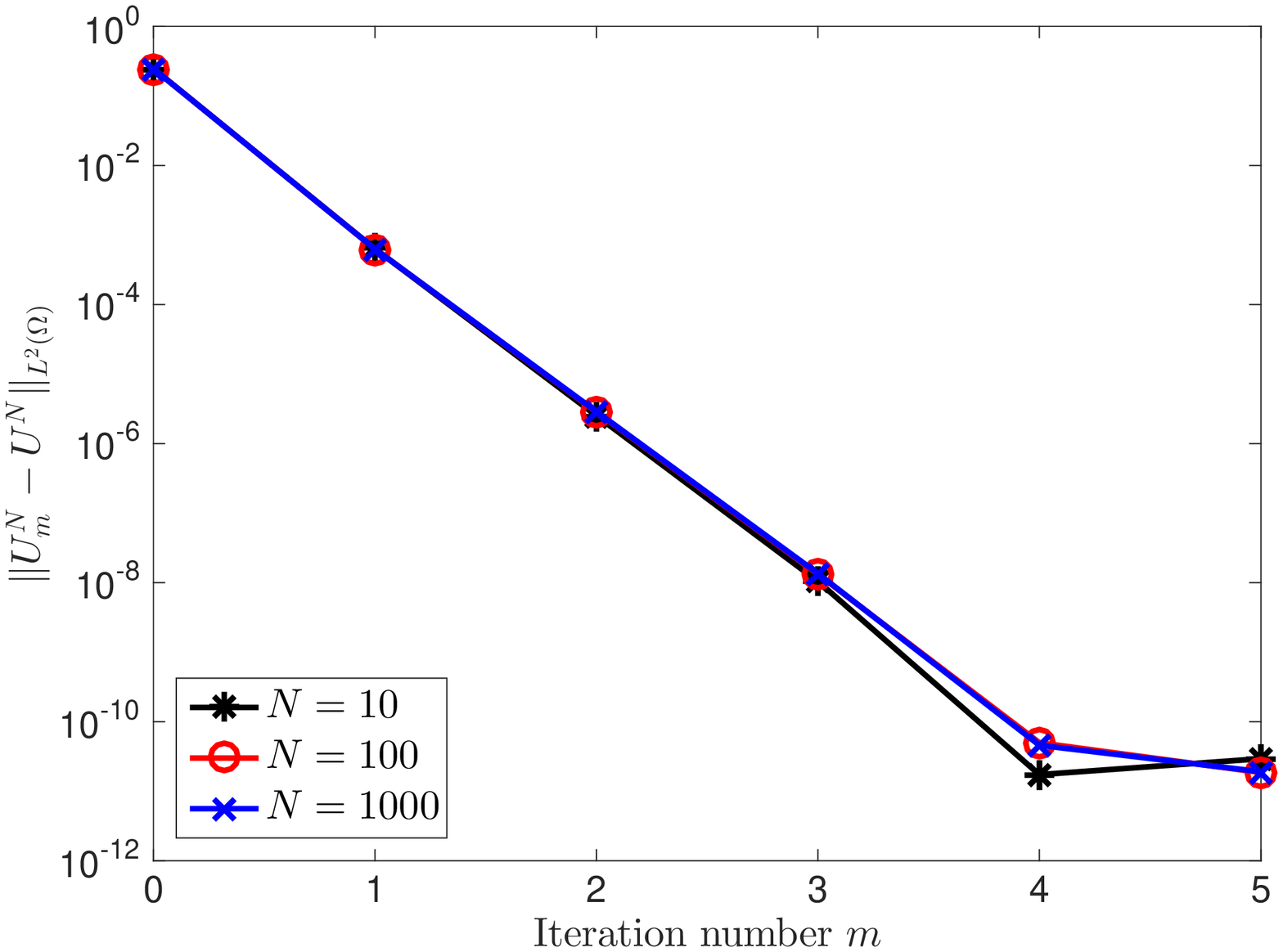}
  \label{fig:eg2-tau}
} %
\subfloat[$\tau=T/100$, influence of $\kappa$]{
  \includegraphics[width=0.35\textwidth]{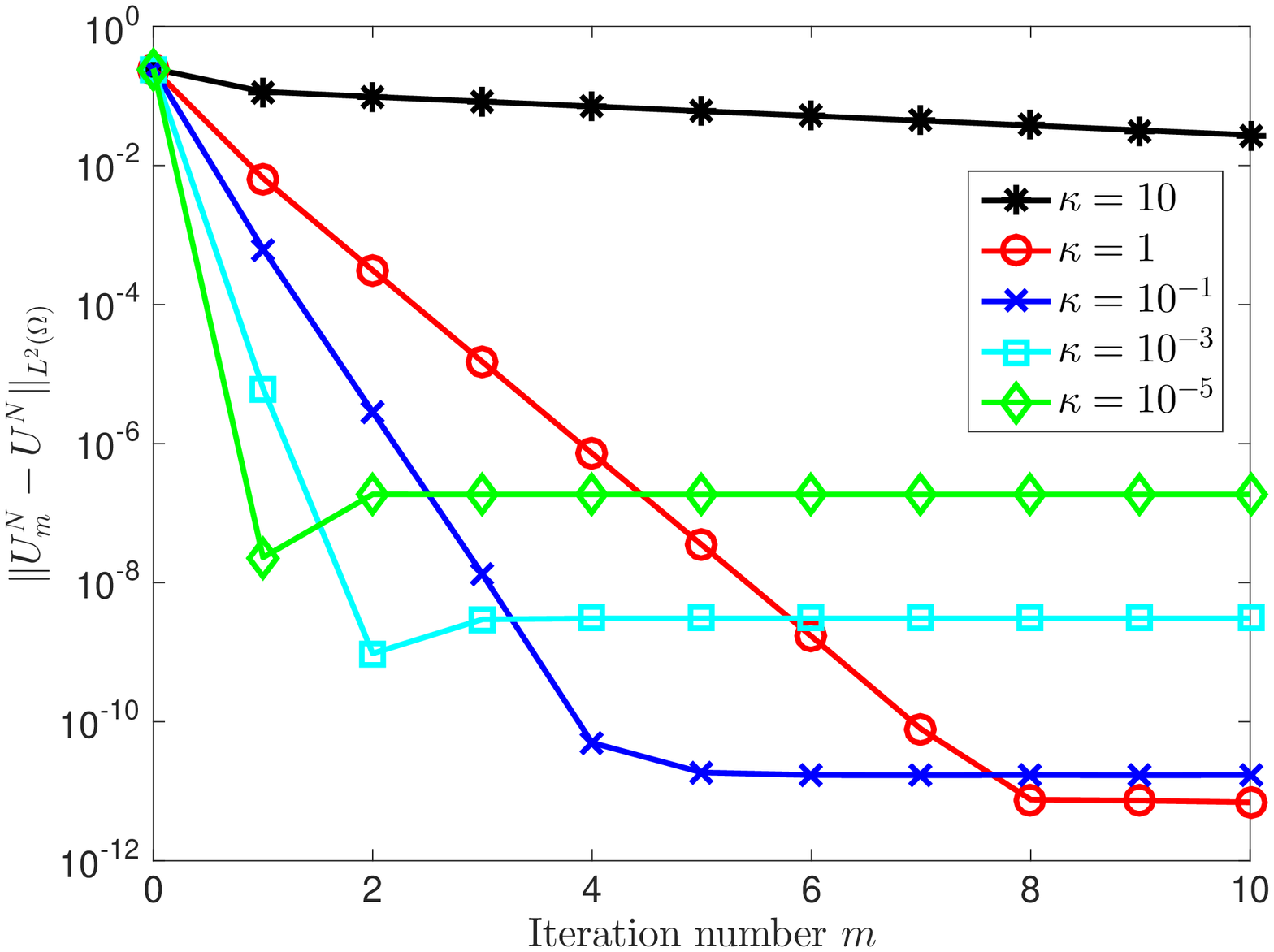}
  \label{fig:eg2-kappa}
}
\caption{PinT BDF3 for Example \ref{Example2}: $\alpha = 0.5$, $T  = 0.1$ and $h = 1/1000$.}
\label{fig:eg2}
\end{figure}
}
\end{example}

\vskip5pt
\begin{example}[2D subdiffusion equation]\label{Example3}
{\upshape In this example, the spatial discretization is taken on the
uniform triangulation of $\Omega=(0,1)^2$. We consider the following
problem data
\begin{equation*}
  v(x) = \chi_{(0,\frac12)\times(0,\frac12)}(x)\quad \text{and} \quad
  f(x,t) = \cos(t)\chi_{(\frac12,1)\times(\frac12,1)}(x).
\end{equation*}

The numerical solutions are computed on a uniform triangular mesh with $h = 10^{-2}$.
We also observe that $\kappa$ needs to be properly chosen to
balance the convergence rate and roundoff error, see Figure
\ref{fig:eg3-sol}.

\begin{figure}[!htbp]
\centering
\captionsetup{justification=centering}
  \includegraphics[width=0.4\textwidth]{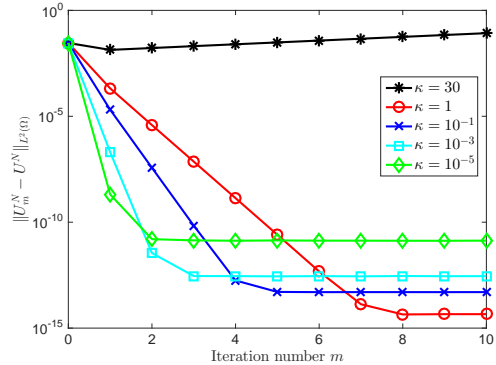}
\caption{PinT BDF3 for Example \ref{Example3}:
  convergence histories for $\alpha = 0.5$, $\tau = 10^{-3}$ and $h =
  10^{-2}$.}
\label{fig:eg3-sol}
\end{figure}

\begin{figure}[!htbp]
\centering
\captionsetup{justification=centering}
  \subfloat[$T = 0.01$, influence of $\alpha$]{
  \includegraphics[width=0.32\textwidth]{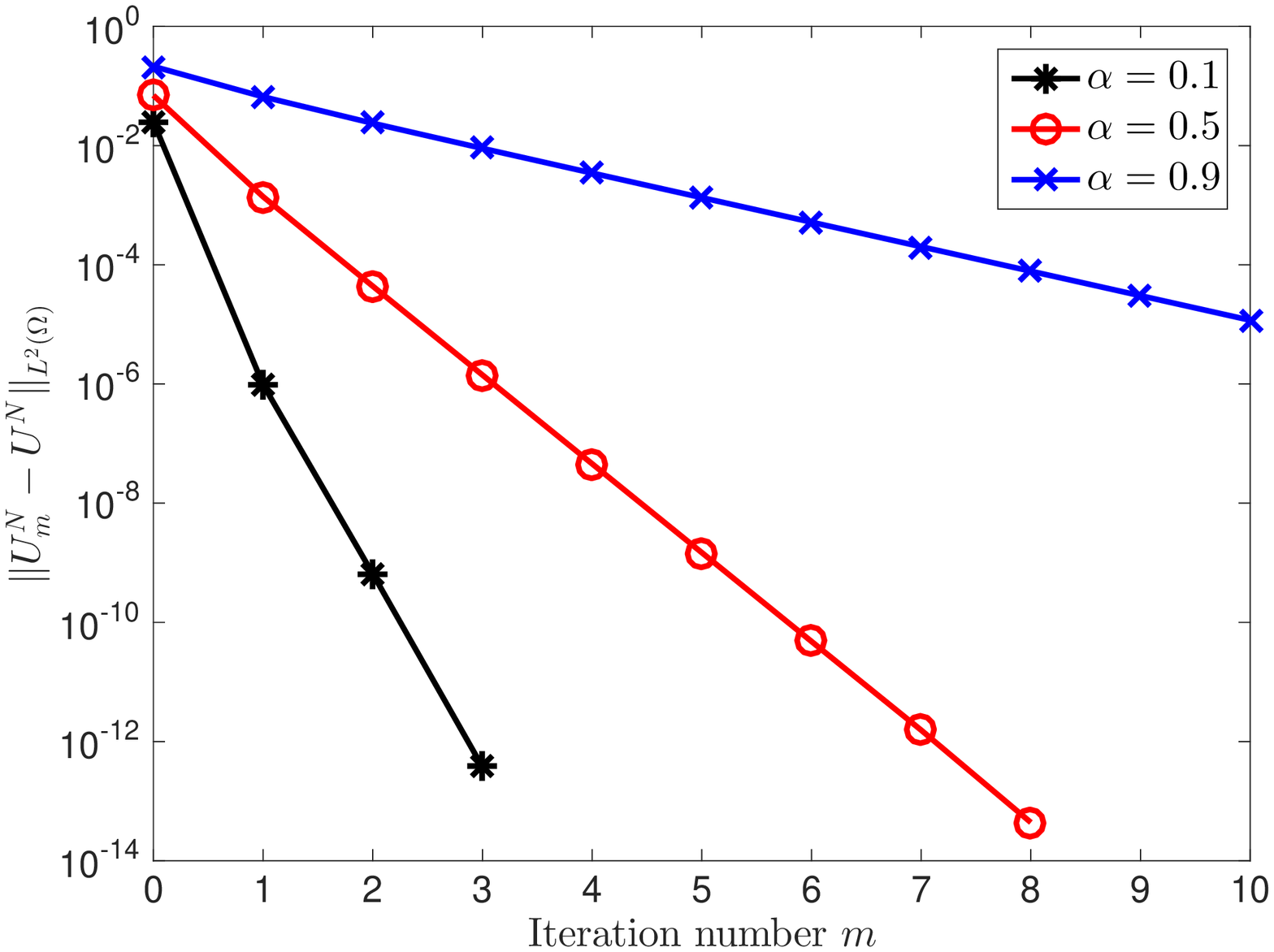}
  \label{fig:eg3-alpha1}
} %
  \subfloat[$T = 0.1$, influence of $\alpha$]{
  \includegraphics[width=0.32\textwidth]{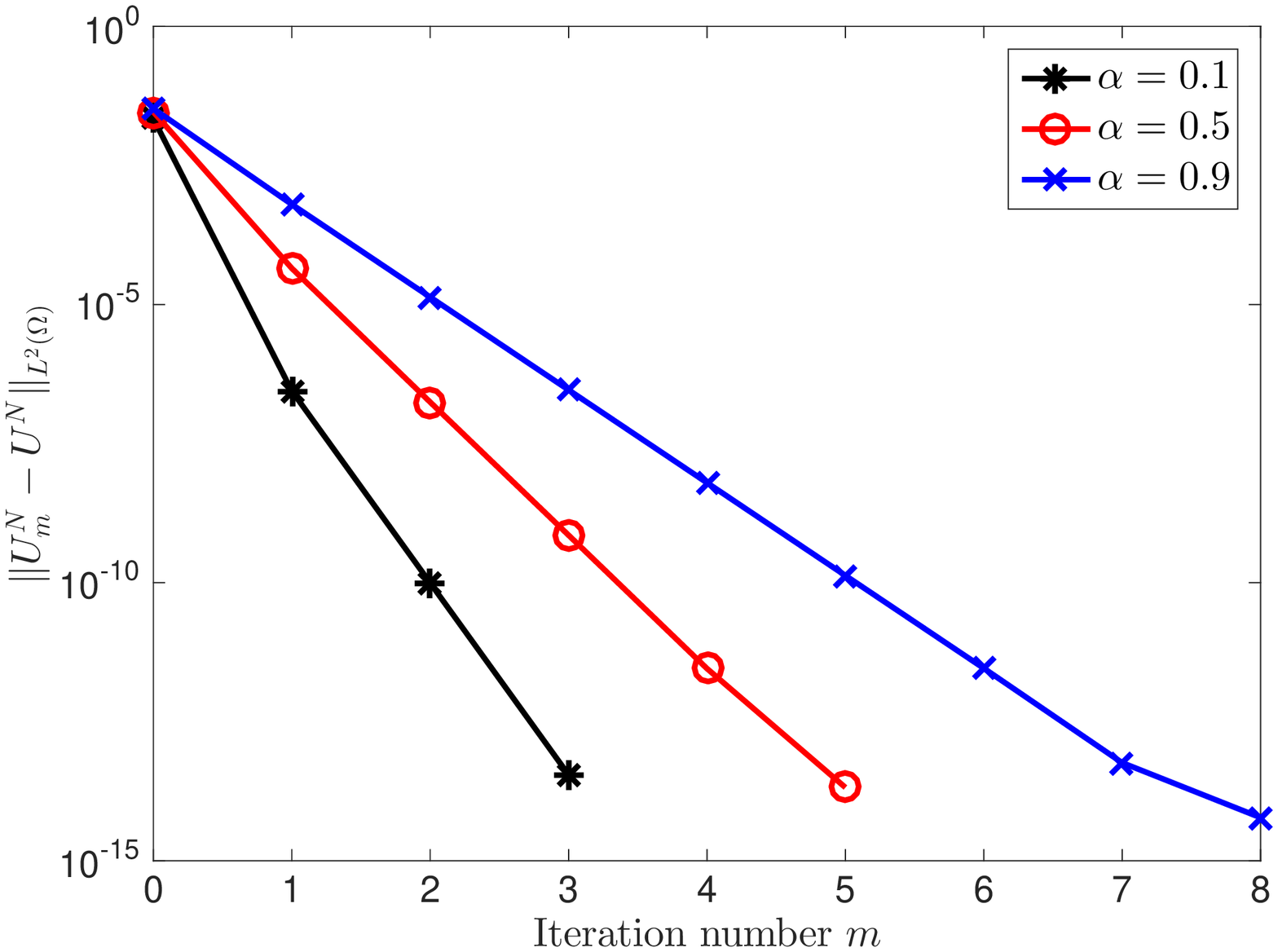}
  \label{fig:eg3-alpha2}
} %
  \subfloat[$T = 1$, influence of $\alpha$]{
  \includegraphics[width=0.32\textwidth]{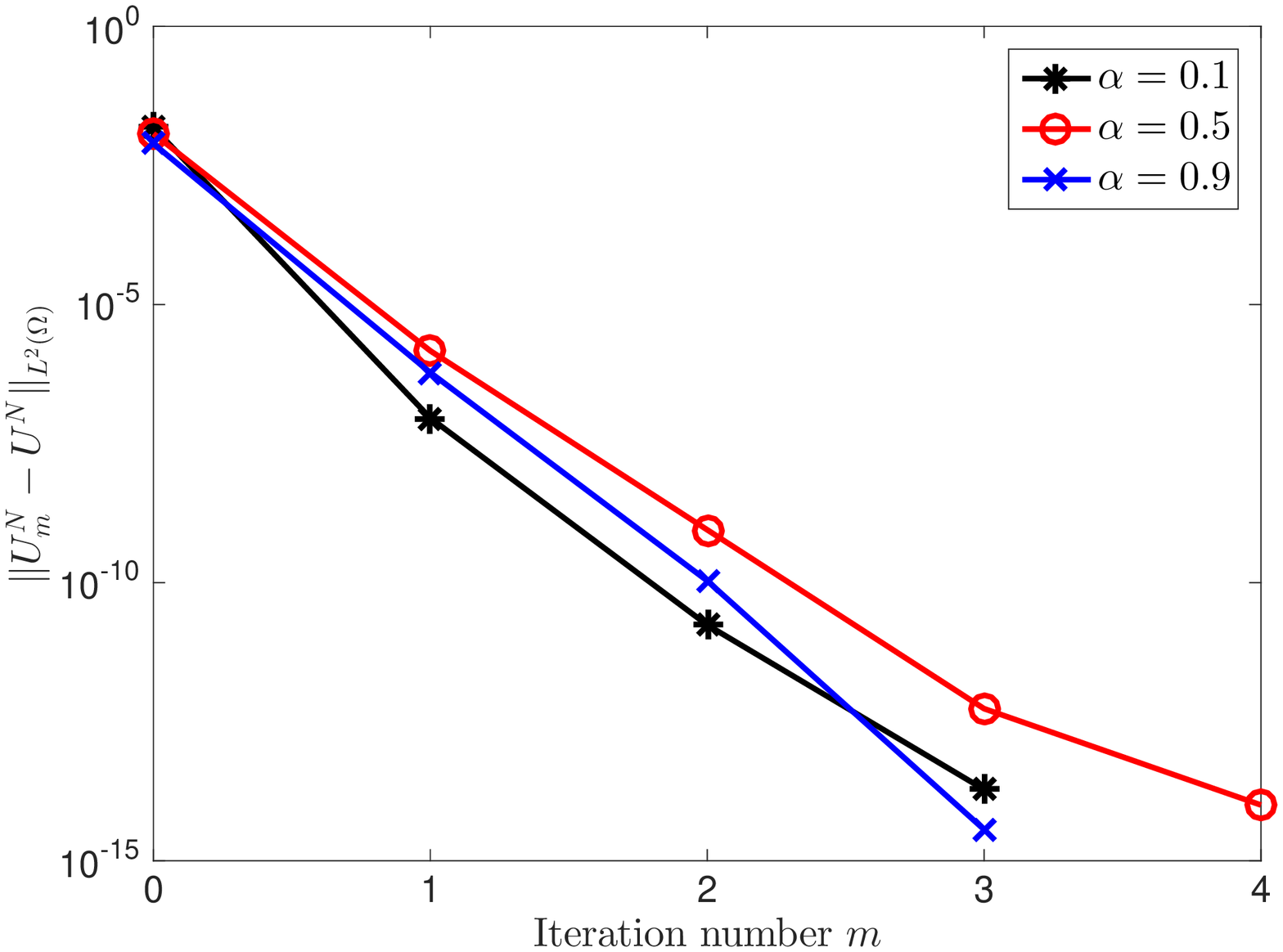}
  \label{fig:eg3-alpha3}
}
\caption{PinT BDF3 for Example \ref{Example3}: $\kappa = 1/\log(N)$, $\tau =
  10^{-3}$ and $h = 10^{-2}$.}
\label{fig:eg3-alpha}
\end{figure}

\begin{figure}[!htbp]
\centering
\captionsetup{justification=centering}
  \subfloat[$\alpha = 0.1$, influence of $T$]{
  \includegraphics[width=0.32\textwidth]{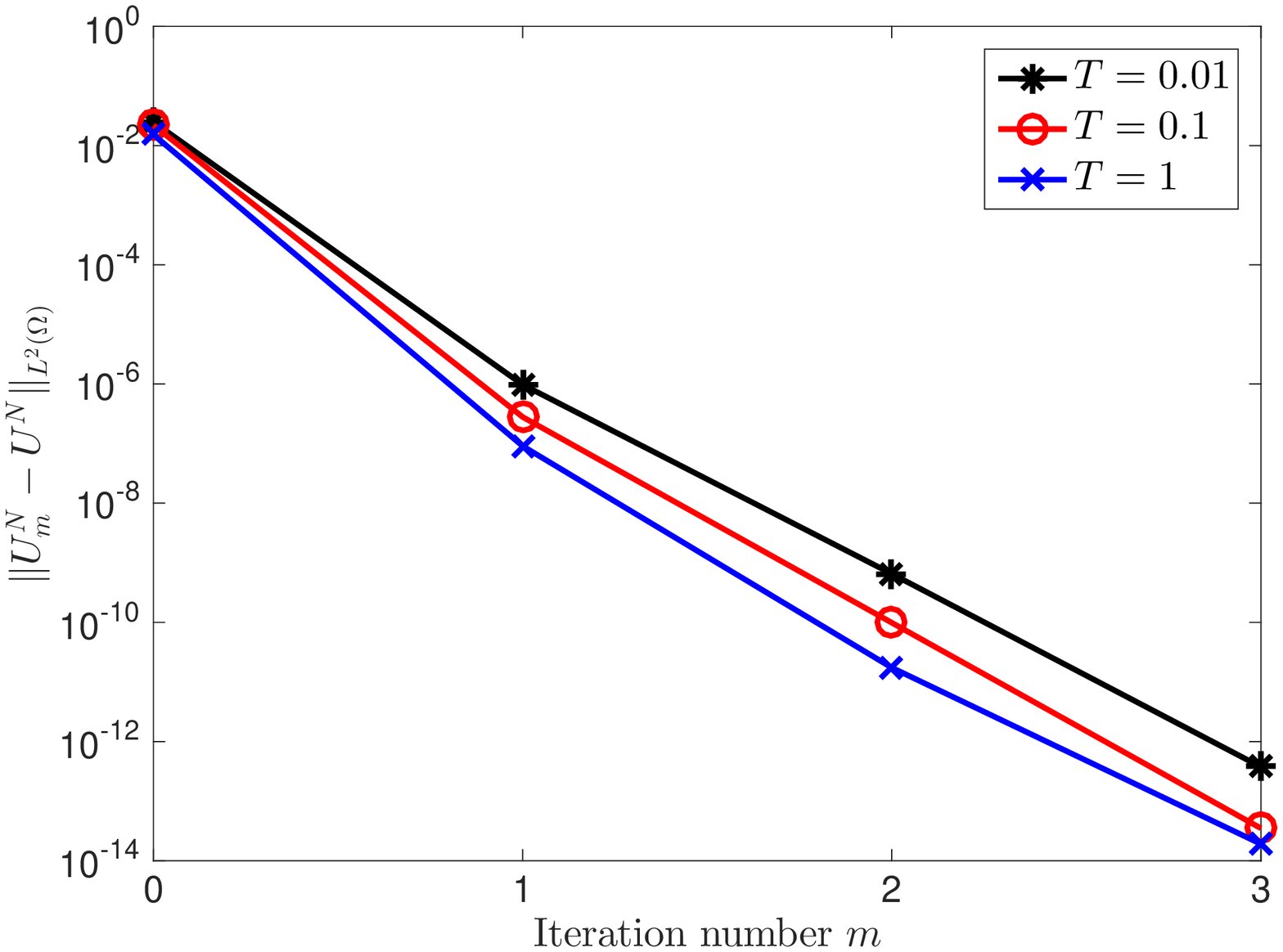}
  \label{fig:eg3-T1}
} %
  \subfloat[$\alpha = 0.5$, influence of $T$]{
  \includegraphics[width=0.32\textwidth]{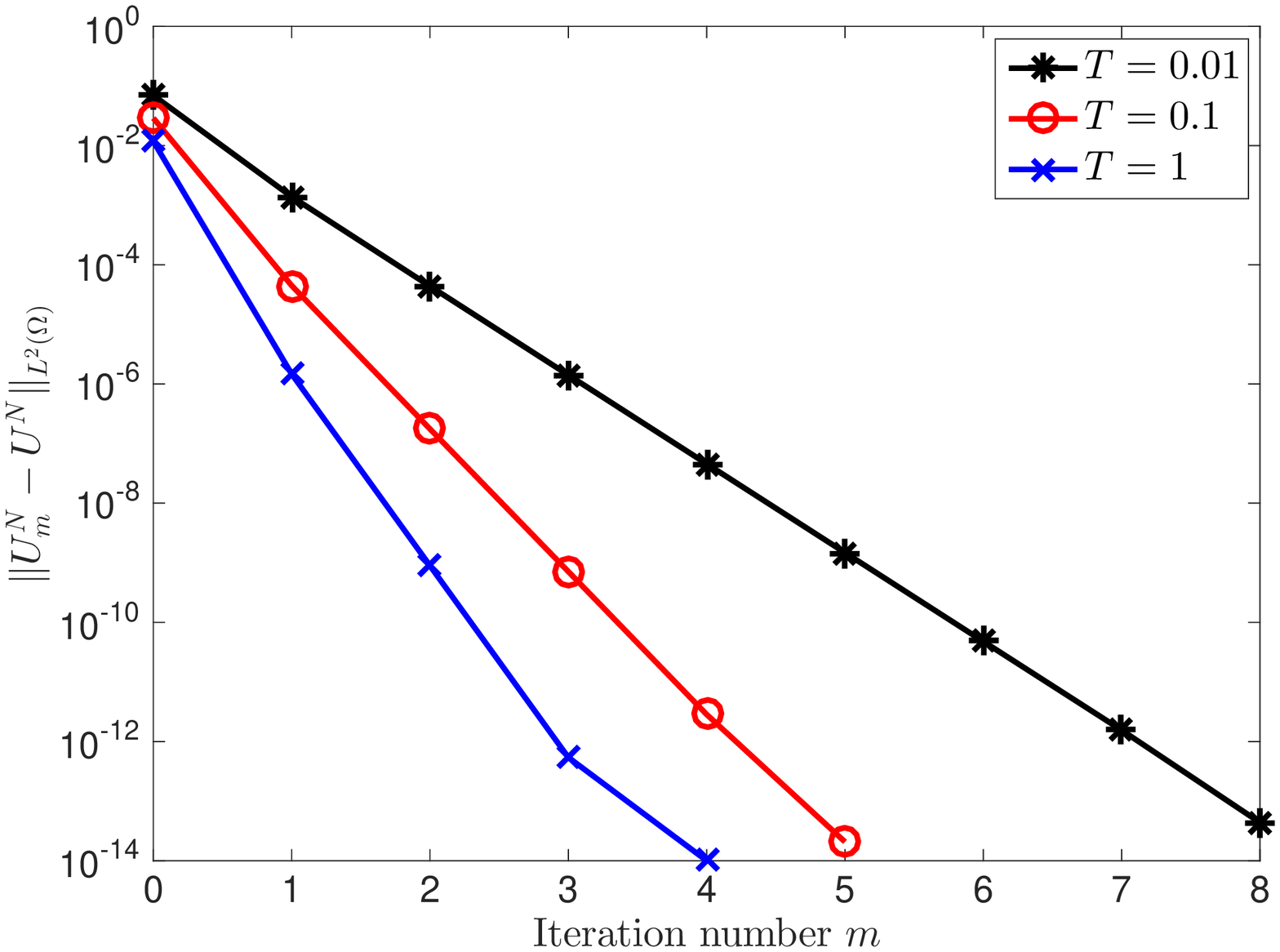}
  \label{fig:eg3-T2}
} %
  \subfloat[$\alpha = 0.9$, influence of $T$]{
  \includegraphics[width=0.32\textwidth]{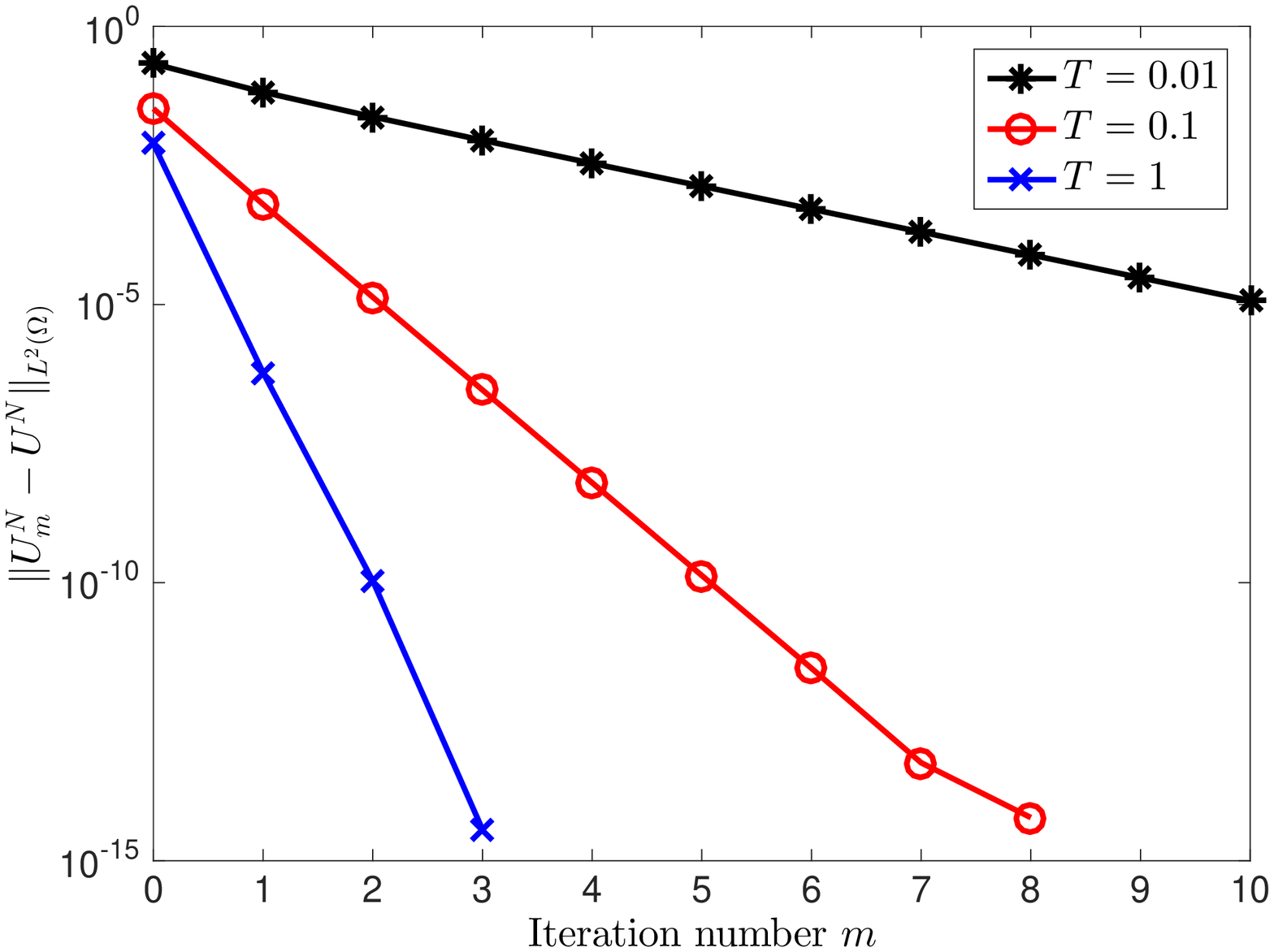}
  \label{fig:eg3-T3}
} %
\caption{PinT BDF3 for Example \ref{Example3}: Influence
  of $T$ for the convergence rate, $\kappa = 1/\log(N)$, $\tau =
  10^{-3}$ and $h = 10^{-2}$.}
\label{fig:eg3-T}
\end{figure}

Recall that, in the analysis, there is a generic constant $c$ in the
convergence rate \eqref{kappa2}, which depends on the fractional order
$\alpha$ and $T$. We numerically check these dependences and
present the results in Figure \ref{fig:eg3-alpha} and
\ref{fig:eg3-T}, respectively.  Taking $\kappa = 1/ \log(N)$, we observe the faster convergence rate with smaller $\alpha$ when $T$ is small, see Figure \ref{fig:eg3-alpha1}. With the increase of $T$, these difference is getting smaller. Further, we
observe the faster convergence rate with greater $T$ for various $\alpha$ in Figure \ref{fig:eg3-T}, which shows the
significant advantage of the proposed method for long-time simulation.
}
\end{example}

\subsection{Extension to nonlinear problems}
In this part, we shall briefly discuss  a possible application of the time-parallel
algorithm to the semilinear (sub)diffusion problem (with
$\alpha\in(0,1]$):
\begin{equation}\label{eqn:fde-nonlinear}
\left\{
\begin{aligned}
    \Dal u(t)  +  Au(t) + g(u(t))&= f(t), \quad \text{for all}~~ t\in(0,T],\\
    u(0)&=v.
\end{aligned}
\right.
\end{equation}
To numerically solve \eqref{eqn:fde-nonlinear}, we follow the similar idea introduced in Section \ref{sec:frac}
and consider the modified CQ-BDF$k$ scheme:
\begin{equation}\label{eqn:BDF-CQ-m-r}
\begin{aligned}
\bar \partial_\tau^\alpha U^n-\Delta U^n + g(U^n) &= \bar{f}_n, \quad &&1\leq n\leq N,\\
U^n&=v,\quad &&n\le 0,
\end{aligned}
\end{equation}
where
$
\bar{f}_n := f(x,t_n) + a_n^{(k)}(f(x,0) + \Delta v - g(v) ) + \sum_{\ell=1}^{k-2} b_{\ell,n}^{(k)}\tau^{\ell} \partial_t^{\ell}f(x,0)
$.
If $\alpha=1$, it reduces to a modified BDF$k$ scheme for the classical semilinear parabolic equations.
It was proved in \cite[Theorem 3.4]{WangZhou:2020} that
\begin{equation}\label{eqn:err-nonlinear}
\| U^n - u(t_n) \|_{H} \le c_T  t_n^{\alpha - \min(k,1+2\alpha-\epsilon)} \tau^{\min(k,1+2\alpha-\epsilon)}.
\end{equation}
for arbitrarily small $\epsilon$.

In order to solve the numerical solution in a time-parallel manner,
we consider a modified Newton's iteration to linearize the problem: for integer $\ell \ge1$, we compute $U_{\ell}^n=U_{\ell-1}^n+W_\ell^n$ where $W_\ell^n$ satisfies homogeneous Dirichlet boundary condition and
\begin{equation}\label{eqn:BDF-CQ-m-r-N}
\begin{aligned}
(\bar \partial_\tau^\alpha  -\Delta) W_\ell^n- g'(\overline U_{\ell-1}) W_\ell^n&= \bar{f}_n - (\bar \partial_\tau^\alpha-\Delta) U_{\ell-1}^n - g(U_{\ell-1}^n),
\quad &&1\leq n\leq N,\\
W_\ell^n&=0,\quad &&n\le 0.
\end{aligned}
\end{equation}
Here  $\overline  U_{\ell-1}$ denotes an average of $U_{\ell-1}^n$ in all levels, defined as
$$ \overline  U_{\ell-1} : = \frac1N\sum_{n=1}^N U_{\ell-1}^n.  $$
Then, for each iteration, we shall solve the linear system \eqref{eqn:BDF-CQ-m-r-N}
with a time-independent coefficient. 
Therefore, we can apply the strategy in Sections \ref{sec:heat} and \ref{sec:frac}, i.e., applying waveform relaxation to derive
an iterative solver: with $W_{\ell,0}^n=0$ and $m=1,2,\ldots$, for given $W_{\ell,m-1}^n$, we compute $W_{\ell,m}^n$ such that
\begin{equation}\label{eqn:BDF-CQ-m-r-N-w}
\begin{aligned}
(\bar \partial_\tau^\alpha  -\Delta) W_{\ell,m}^n- g'(\overline U_{\ell-1}) W_{\ell,m}^n&=\bar{f}_n - (\bar \partial_\tau^\alpha-\Delta) U_{\ell-1}^n - g(U_{\ell-1}^n),
\quad && n = 1,2,\ldots, N,\\
W_{\ell,m}^{-n} &= \kappa(W_{\ell,m}^{ {N-n}} - W_{\ell,m-1}^{ {N-n}}),\quad &&n=0, 1, \ldots, N+1,\\
W_{\ell,m}^n&=0,\quad &&n\le -N.
\end{aligned}
\end{equation}
This is a periodic-like system and hence it could be solved in parallel by diagonalization technique. We describe the complete iterative algorithm in
Algorithm \ref{alg:nonlinear}.

\begin{algorithm}[hbt!]
  \caption{~~PinT BDF$k$ scheme for nonlinear models.\label{alg:nonlinear}}
  \begin{algorithmic}[1]
    \STATE Initialize $U_0^n = v$ for all $n\in \mathbb{Z}$, and set $\ell=0$.
    \FOR{$\ell =1,\ldots,L$}
     \STATE   Initialize $W_{\ell,0}^n = 0$ for all $n\in \mathbb{Z}$, and set $m=0$.
    \FOR{$m=1,\ldots,m_\ell$}
       \STATE Solve $W_{\ell,m}^n$ satisfying \eqref{eqn:BDF-CQ-m-r-N-w} in a time parallel manner using Algorithm \ref{alg:heat} or \ref{alg:frac}.
       \STATE Check the stopping criterion of waveform relaxation.
      \ENDFOR
      \STATE Update $U_{\ell}^n = U_{\ell-1}^n + W_{\ell,m}^n.$
    \STATE Check the stopping criterion of Newton's iteration.
    \ENDFOR
  \end{algorithmic}
\end{algorithm}

\begin{example}[Allen--Cahn equations] \label{Example4}
{\upshape
Taking $A = -\Delta$, $H=L^2(\Omega)$, $V = H_0^1(\Omega)$ and
$g(u) = \frac{1}{\varepsilon^2}(u^3 - u)$ in \eqref{eqn:fde-nonlinear}, we obtain the nonlinear problem with $\alpha\in(0,1]$:
\begin{equation}\label{AC}
\left\{
 \begin{aligned}
\partial_t^\alpha u-\Delta u + \frac{1}{\varepsilon^2}(u^3 - u) &= f(x,t)&& \text{ in } \Omega\times (0,T),\\
u &=0 &&\text{ on  }  \partial\Omega\times (0,T),\\
u(0)&=v &&\text{ in }\Omega.
\end{aligned}
\right.
\end{equation}

In case that $\alpha=1$ and $f(x,t) = 0$, the model is called Allen--Cahn equation, which is a popular phase-field model,
introduced in \cite{AC} to describe the motion of anti-phase boundaries in crystalline solids.
In the context, $u$ represents the concentration of one of the two metallic components of the alloy and
the parameter $\varepsilon$ involved in the nonlinear term  represents the interfacial width, which is small compared to the
characteristic length of the laboratory scale; see also
\cite{anderson1998diffuse,chen2002phase,yue2004diffuse}
for some applications and \cite{DuYangZhou:2020} for some discussion for fractional models with $\alpha\in(0,1)$.

We shall investigate the numerical performance of Algorithm \ref{alg:nonlinear} on the domain
$\Omega = (0,1)$ with equally spaced mesh. The exterior force $f(x,t)$ is chosen such that
the exact solution yields $u = \frac{t^2}{\Gamma(3)} \sin(2\pi x)$.

First of all, we test the nonlinear problem \eqref{AC} with $\varepsilon = 1$ (mild nonlinearity),
and  report the error of Newton's iteration, i.e. $e^N_\ell = \|U_\ell^N - U^N \|_{L^2(\Omega)}.$
In the computation, we 
 and chose the stopping criteria of inner iteration (waveform relaxation) as
 $$\|W_{\ell,m}^n - W_{\ell,m-1}^n\|_\infty < 1\times 10^{-12}\quad\text{for all}\quad \ell\in \mathbb{N}^+.$$
The numbers of inner iteration  are listed in the bracket.
Invoking the error estimate \eqref{eqn:err-nonlinear}, we report the numerical results for
$\alpha = 0.25$ (with BDF$1$ and BDF$2$) and $\alpha = 0.75$ (with BDF$1$, BDF$2$ and BDF$3$) in Table \ref{tab:fde-nonlinear}.
Numerical results in Table \ref{tab:fde-nonlinear} indicate that the
inner iteration (waveform relaxation) converges robustly and quickly for the linearized system \eqref{eqn:BDF-CQ-m-r-N}, and
the modified Newton's iteration
converges fast for both cases ($\alpha = 0.25$ and $\alpha=0.75$), so does the Algorithm \ref{alg:nonlinear}.


\begin{table}[htb!]
\caption{Example \ref{Example4}:   $e^N_\ell$ with $T = 0.4$, $\tau = T/100$, $h = 1/1000$, $\kappa = 0.1$, and $\varepsilon = 1$.
 }
\label{tab:fde-nonlinear}
\centering
\subfloat[$\alpha = 0.25$]{
\begin{tabular}{|c|cc|}
\hline
 $\ell \backslash$ BDF$k$ & $k=1$ & $k=2$ \\
\hline
0 & 6.30e-02 & 6.30e-02 \\

1 &9.27e-06$(5)$ & 9.27e-06$(5)$ \\

2 &3.64e-09$(4)$ & 3.63e-09$(4)$ \\

3 & 1.42e-12$(3)$ & 1.42e-12$(3)$ \\
\hline
\end{tabular}
} \quad %
\subfloat[$\alpha = 0.75$]{
\begin{tabular}{|c|ccc|}
\hline
 $\ell \backslash$ BDF$k$ & $k=1$ & $k=2$ & $k=3$ \\
\hline
0 & 5.94e-02 & 5.94e-02 & 5.94e-02 \\

1 &6.83e-06$(5)$ & 6.80e-06$(5)$ & 6.80e-06$(5)$ \\

2 &1.95e-09$(4)$ & 1.92e-09$(4)$ & 1.92e-09$(4)$ \\

3 & 5.23e-13$(3)$ & 5.06e-13$(3)$ & 5.03e-13$(3)$ \\
\hline
\end{tabular}
}
\end{table}

\begin{figure}[!htbp]
\centering
\captionsetup{justification=centering}
  \subfloat[$\alpha = 0.25$]{
  \includegraphics[width=0.32\textwidth]{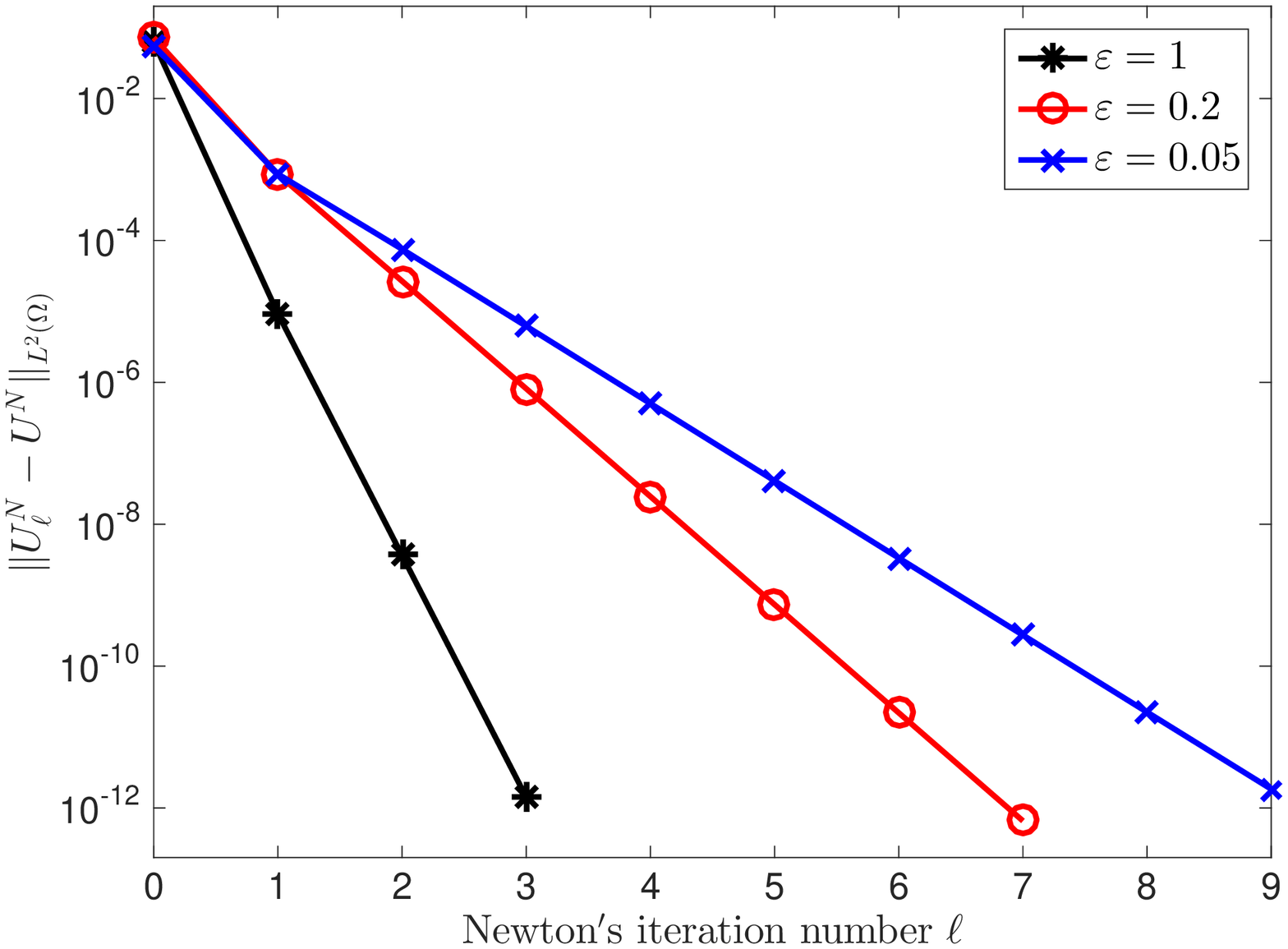}
  \label{fig:eg4-alpha1}
} %
  \subfloat[$\alpha = 0.75$]{
  \includegraphics[width=0.32\textwidth]{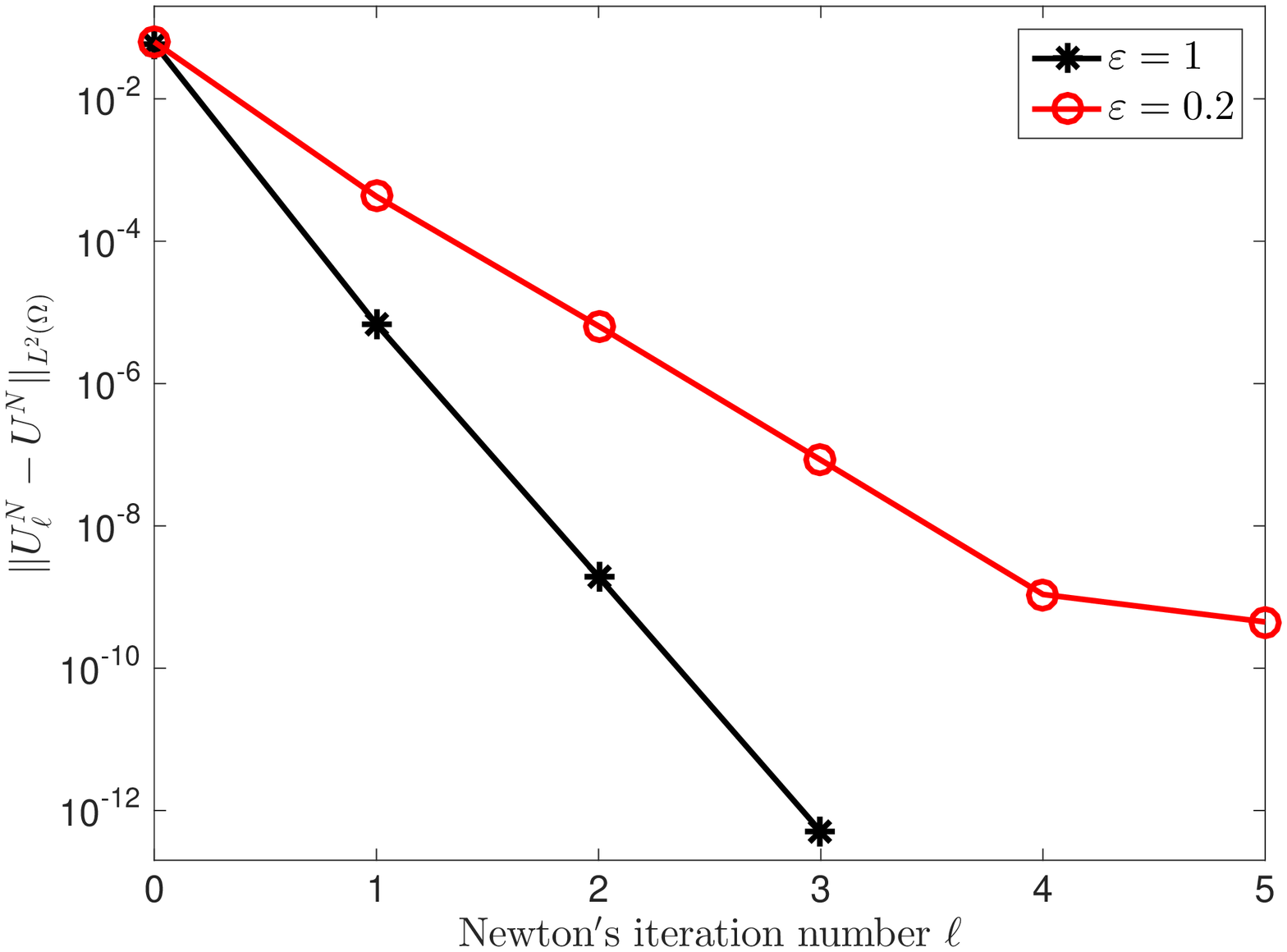}
  \label{fig:eg4-alpha2}
} %
  \subfloat[$\alpha = 1$]{
  \includegraphics[width=0.32\textwidth]{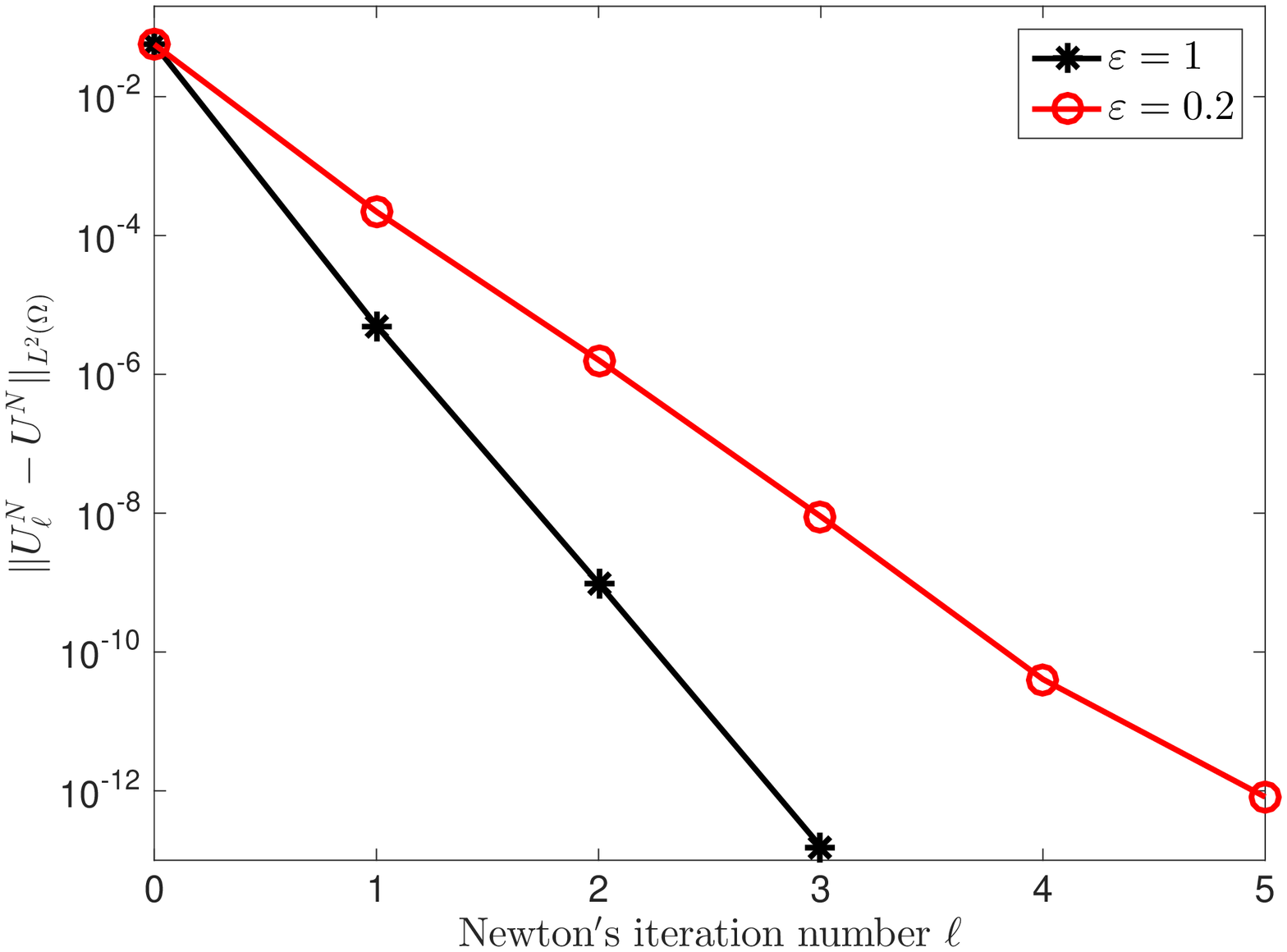}
  \label{fig:eg4-alpha3}
}
\caption{PinT BDF3 for Example \ref{Example4}: Influence of the strength of nonlinearity.}
\label{fig:eg4-alpha}
\end{figure}

Next, we investigate the influence of the strength of nonlinearity for both subdiffusion and normal diffusion cases in Figure \ref{fig:eg4-alpha}.
As can be seen from  Table \ref{tab:fde-nonlinear}, the convergence behaviors of Algorithm \ref{alg:nonlinear} are insensitive with various BDF$k$ schemes, and hence the results of BDF2 scheme are presented. We observe that strong nonlinearity will lower the convergence rate, not only due to the strong nonlinearity itself to the Newton's iteration, but also possibly to the more inaccurate average $\overline U_{\ell-1}$ in \eqref{eqn:BDF-CQ-m-r-N}. As the nonlinearity getting stronger, for example $\varepsilon = 0.05$ with $\alpha = 0.75$ or $\alpha = 1$, the modified Newton's iteration does not converge. It is reasonable
that the accuracy of average $\overline U_{\ell-1}$ hinges on the variation of the solutions on certain time interval.
Practically, a windowing technique could be used in this algorithm: after a certain number
of time steps computed in parallel in the current time window, the computation can be restarted for the
next time window in a sequential way. This is beyond the scope of current paper and can be considered in the future.



}
\end{example}


\section*{Acknowledgements}
The research of S. Wu is partially supported by the National Natural
Science Foundation of China grant (No. 11901016) and the startup grant
from Peking University.  The research of Z. Zhou is partially
supported by a Hong Kong RGC grant (project No. 15304420).

\bibliographystyle{abbrv}
\bibliography{ref_parallel}

\begin{thebibliography}{10}

\bibitem{AC}
S.~M. Allen and J.~W. Cahn.
\newblock A microscopic theory for anti-phase boundary motion and its
  application to anti-phase domain coarsening.
\newblock {\em Acta Metall}, 27:1085--1095, 1979.

\bibitem{anderson1998diffuse}
D.~M. Anderson, G.~B. McFadden, and A.~A. Wheeler.
\newblock Diffuse-interface methods in fluid mechanics.
\newblock {\em Annual review of fluid mechanics}, 30(1):139--165, 1998.

\bibitem{Baffet-1}
D.~Baffet and J.~S. Hesthaven.
\newblock A kernel compression scheme for fractional differential equations.
\newblock {\em SIAM J. Numer. Anal.}, 55(2):496--520, 2017.

\bibitem{Bal:2005}
G.~Bal.
\newblock On the convergence and the stability of the parareal algorithm to
  solve partial differential equations.
\newblock In {\em Domain decomposition methods in science and engineering},
  volume~40 of {\em Lect. Notes Comput. Sci. Eng.}, pages 425--432. Springer,
  Berlin, 2005.

\bibitem{BanjaiLopez:2019}
L.~Banjai and M.~L\'{o}pez-Fern\'{a}ndez.
\newblock Efficient high order algorithms for fractional integrals and
  fractional differential equations.
\newblock {\em Numer. Math.}, 141(2):289--317, 2019.

\bibitem{chen2002phase}
L.-Q. Chen.
\newblock Phase-field models for microstructure evolution.
\newblock {\em Annual review of materials research}, 32(1):113--140, 2002.

\bibitem{Chen:2020}
S.~Chen, J.~Shen, Z.~Zhang, and Z.~Zhou.
\newblock A spectrally accurate approximation to subdiffusion equations using
  the log orthogonal functions.
\newblock {\em SIAM J. Sci. Comput.}, 42(2):A849--A877, 2020.

\bibitem{CuestaLubichPalencia:2006}
E.~Cuesta, C.~Lubich, and C.~Palencia.
\newblock Convolution quadrature time discretization of fractional
  diffusion-wave equations.
\newblock {\em Math. Comp.}, 75(254):673--696, 2006.

\bibitem{DuYangZhou:2020}
Q.~Du, J.~Yang, and Z.~Zhou.
\newblock Time-fractional {A}llen-{C}ahn equations: analysis and numerical
  methods.
\newblock {\em J. Sci. Comput.}, 85(2):Paper No. 42, 30, 2020.

\bibitem{Fischer:2019}
M.~Fischer.
\newblock Fast and parallel {R}unge-{K}utta approximation of fractional
  evolution equations.
\newblock {\em SIAM J. Sci. Comput.}, 41(2):A927--A947, 2019.

\bibitem{Gander:2015}
M.~J. Gander.
\newblock 50 years of time parallel time integration.
\newblock In {\em Multiple shooting and time domain decomposition methods},
  volume~9 of {\em Contrib. Math. Comput. Sci.}, pages 69--113. Springer, Cham,
  2015.

\bibitem{Gander:SISC-wave}
M.~J. Gander, L.~Halpern, J.~Rannou, and J.~Ryan.
\newblock A direct time parallel solver by diagonalization for the wave
  equation.
\newblock {\em SIAM J. Sci. Comput.}, 41(1):A220--A245, 2019.

\bibitem{GanderJiang:SISC2013}
M.~J. Gander, Y.-L. Jiang, B.~Song, and H.~Zhang.
\newblock Analysis of two parareal algorithms for time-periodic problems.
\newblock {\em SIAM J. Sci. Comput.}, 35(5):A2393--A2415, 2013.

\bibitem{Gander:2020ParaDiag}
M.~J. Gander, J.~Liu, S.-L. Wu, X.~Yue, and T.~Zhou.
\newblock Paradiag: Parallel-in-time algorithms based on the diagonalization
  technique.
\newblock {\em arXiv preprint, arXiv:2005.09158}, 2020.

\bibitem{GanderVandewalle:2007}
M.~J. Gander and S.~Vandewalle.
\newblock Analysis of the parareal time-parallel time-integration method.
\newblock {\em SIAM J. Sci. Comput.}, 29(2):556--578, 2007.

\bibitem{GanderWu:2019}
M.~J. Gander and S.-L. Wu.
\newblock Convergence analysis of a {\it periodic-like} waveform relaxation
  method for initial-value problems via the diagonalization technique.
\newblock {\em Numer. Math.}, 143(2):489--527, 2019.

\bibitem{GasparRodrigo:2017}
F.~J. Gaspar and C.~Rodrigo.
\newblock Multigrid waveform relaxation for the time-fractional heat equation.
\newblock {\em SIAM J. Sci. Comput.}, 39(4):A1201--A1224, 2017.

\bibitem{GoddardWathen:2019}
A.~Goddard and A.~Wathen.
\newblock A note on parallel preconditioning for all-at-once evolutionary
  {PDE}s.
\newblock {\em Electron. Trans. Numer. Anal.}, 51:135--150, 2019.

\bibitem{golding2006physical}
I.~Golding and E.~C. Cox.
\newblock Physical nature of bacterial cytoplasm.
\newblock {\em Phys. Rev. Lett.}, 96:098102, Mar 2006.

\bibitem{Golub:book}
G.~H. Golub and C.~F. Van~Loan.
\newblock {\em Matrix computations}.
\newblock Johns Hopkins Studies in the Mathematical Sciences. Johns Hopkins
  University Press, Baltimore, MD, third edition, 1996.

\bibitem{GuWu:JCP2020}
X.-M. Gu and S.-L. Wu.
\newblock A parallel-in-time iterative algorithm for volterra partial
  integro-differential problems with weakly singular kernel.
\newblock {\em Journal of Computational Physics}, 417:109576, 2020.

\bibitem{Hackbusch:book}
W.~Hackbusch.
\newblock Parabolic multigrid methods.
\newblock In {\em Computing methods in applied sciences and engineering, {VI}
  ({V}ersailles, 1983)}, pages 189--197. North-Holland, Amsterdam, 1984.

\bibitem{HairerWanner:1996}
E.~Hairer and G.~Wanner.
\newblock {\em Solving {O}rdinary {D}ifferential {E}quations. {II}}.
\newblock Springer-Verlag, Berlin, second edition, 1996.
\newblock Stiff and differential-algebraic problems.

\bibitem{Horton:1995}
G.~Horton and S.~Vandewalle.
\newblock A space-time multigrid method for parabolic partial differential
  equations.
\newblock {\em SIAM J. Sci. Comput.}, 16(4):848--864, 1995.

\bibitem{HouXu:2017}
D.~Hou and C.~Xu.
\newblock A fractional spectral method with applications to some singular
  problems.
\newblock {\em Adv. Comput. Math.}, 43(5):911--944, 2017.

\bibitem{IndaBisseling:2001}
M.~A. Inda and R.~H. Bisseling.
\newblock A simple and efficient parallel {FFT} algorithm using the {BSP}
  model.
\newblock {\em Parallel Comput.}, 27(14):1847--1878, 2001.

\bibitem{JiangZhang:2017}
S.~Jiang, J.~Zhang, Q.~Zhang, and Z.~Zhang.
\newblock Fast evaluation of the {C}aputo fractional derivative and its
  applications to fractional diffusion equations.
\newblock {\em Commun. Comput. Phys.}, 21(3):650--678, 2017.

\bibitem{Jin:weak}
B.~Jin, R.~Lazarov, J.~Pasciak, and Z.~Zhou.
\newblock Galerkin {FEM} for fractional order parabolic equations with initial
  data in {$H^{-s}$}, {$0\leq s\leq 1$}.
\newblock In {\em Numerical analysis and its applications}, volume 8236 of {\em
  Lecture Notes in Comput. Sci.}, pages 24--37. Springer, Heidelberg, 2013.

\bibitem{JinLiZhou:SISC2017}
B.~Jin, B.~Li, and Z.~Zhou.
\newblock Correction of high-order {BDF} convolution quadrature for fractional
  evolution equations.
\newblock {\em SIAM J. Sci. Comput.}, 39(6):A3129--A3152, 2017.

\bibitem{JinZhou:iter}
B.~Jin and Z.~Zhou.
\newblock Incomplete iterative solution of the subdiffusion problem.
\newblock {\em Numer. Math.}, 145(3):693--725, 2020.

\bibitem{KilbasSrivastavaTrujillo:2006}
A.~A. Kilbas, H.~M. Srivastava, and J.~J. Trujillo.
\newblock {\em Theory and {A}pplications of {F}ractional {D}ifferential
  {E}quations}.
\newblock Elsevier Science B.V., Amsterdam, 2006.

\bibitem{kirchner2000fractal}
J.~W. Kirchner, X.~Feng, and C.~Neal.
\newblock Fractal stream chemistry and its implications for contaminant
  transport in catchments.
\newblock {\em Nature}, 403(6769):524--527, 2000.

\bibitem{Kopteva:2019}
N.~Kopteva.
\newblock Error analysis of the {L}1 method on graded and uniform meshes for a
  fractional-derivative problem in two and three dimensions.
\newblock {\em Math. Comp.}, 88(319):2135--2155, 2019.

\bibitem{Sheen:2013}
H.~Lee, J.~Lee, and D.~Sheen.
\newblock Laplace transform method for parabolic problems with time-dependent
  coefficients.
\newblock {\em SIAM J. Numer. Anal.}, 51(1):112--125, 2013.

\bibitem{LiWangZhou:SINUM2020}
B.~Li, K.~Wang, and Z.~Zhou.
\newblock Long-time accurate symmetrized implicit-explicit {BDF} methods for a
  class of parabolic equations with non-self-adjoint operators.
\newblock {\em SIAM J. Numer. Anal.}, 58(1):189--210, 2020.

\bibitem{Liao:2018}
H.-l. Liao, D.~Li, and J.~Zhang.
\newblock Sharp error estimate of the nonuniform {L}1 formula for linear
  reaction-subdiffusion equations.
\newblock {\em SIAM J. Numer. Anal.}, 56(2):1112--1133, 2018.

\bibitem{MR2385897}
M.~L\'{o}pez-Fern\'{a}ndez, C.~Lubich, and A.~Sch\"{a}dle.
\newblock Adaptive, fast, and oblivious convolution in evolution equations with
  memory.
\newblock {\em SIAM J. Sci. Comput.}, 30(2):1015--1037, 2008.

\bibitem{Lubich:1986}
C.~Lubich.
\newblock Discretized fractional calculus.
\newblock {\em SIAM J. Math. Anal.}, 17(3):704--719, 1986.

\bibitem{Lubich:1988}
C.~Lubich.
\newblock Convolution quadrature and discretized operational calculus. {I}.
\newblock {\em Numer. Math.}, 52(2):129--145, 1988.

\bibitem{LubichOstermann:1987}
C.~Lubich and A.~Ostermann.
\newblock Multigrid dynamic iteration for parabolic equations.
\newblock {\em BIT}, 27(2):216--234, 1987.

\bibitem{LubichSloanThomee:1996}
C.~Lubich, I.~H. Sloan, and V.~Thom\'ee.
\newblock Nonsmooth data error estimates for approximations of an evolution
  equation with a positive-type memory term.
\newblock {\em Math. Comp.}, 65(213):1--17, 1996.

\bibitem{Maday:2008}
Y.~Maday and E.~M. R\o~nquist.
\newblock Parallelization in time through tensor-product space-time solvers.
\newblock {\em C. R. Math. Acad. Sci. Paris}, 346(1-2):113--118, 2008.

\bibitem{MadayTurinici:2002}
Y.~Maday and G.~Turinici.
\newblock A parareal in time procedure for the control of partial differential
  equations.
\newblock {\em C. R. Math. Acad. Sci. Paris}, 335(4):387--392, 2002.

\bibitem{McLeanMustapha:2009}
W.~McLean and K.~Mustapha.
\newblock Convergence analysis of a discontinuous {G}alerkin method for a
  sub-diffusion equation.
\newblock {\em Numer. Algorithms}, 52(1):69--88, 2009.

\bibitem{McLeanMustapha:2015}
W.~McLean and K.~Mustapha.
\newblock Time-stepping error bounds for fractional diffusion problems with
  non-smooth initial data.
\newblock {\em J. Comput. Phys.}, 293:201--217, 2015.

\bibitem{McLeanSloanThomee:2006}
W.~McLean, I.~H. Sloan, and V.~Thom\'{e}e.
\newblock Time discretization via {L}aplace transformation of an
  integro-differential equation of parabolic type.
\newblock {\em Numer. Math.}, 102(3):497--522, 2006.

\bibitem{Miekkala:1987}
U.~Miekkala and O.~Nevanlinna.
\newblock Convergence of dynamic iteration methods for initial value problem.
\newblock {\em SIAM J. Sci. Statist. Comput.}, 8(4):459--482, 1987.

\bibitem{MustaphaAbdallahFurati:2014}
K.~Mustapha, B.~Abdallah, and K.~M. Furati.
\newblock A discontinuous {P}etrov-{G}alerkin method for time-fractional
  diffusion equations.
\newblock {\em SIAM J. Numer. Anal.}, 52(5):2512--2529, 2014.

\bibitem{Nevanlinna:1989b}
O.~Nevanlinna.
\newblock Remarks on {P}icard-{L}indel\"{o}f iteration. {I}.
\newblock {\em BIT}, 29(2):328--346, 1989.

\bibitem{Nigmatulin:1986}
R.~R. Nigmatulin.
\newblock The realization of the generalized transfer equation in a medium with
  fractal geometry.
\newblock {\em Phys. Stat. Sol. B}, 133:425--430, 1986.

\bibitem{BenjaminSchroder:2020}
B.~Ong and J.~Schroder.
\newblock Applications of time parallelization.
\newblock {\em Comput. Vis. Sci.}, page in press, 2020.

\bibitem{Podlubny:1999}
I.~Podlubny.
\newblock {\em Fractional {D}ifferential {E}quations}.
\newblock Academic Press, Inc., San Diego, CA, 1999.

\bibitem{SheenSloanThomee:2000}
D.~Sheen, I.~H. Sloan, and V.~Thom\'{e}e.
\newblock A parallel method for time-discretization of parabolic problems based
  on contour integral representation and quadrature.
\newblock {\em Math. Comp.}, 69(229):177--195, 2000.

\bibitem{Stynes:2017}
M.~Stynes, E.~O'Riordan, and J.~L. Gracia.
\newblock Error analysis of a finite difference method on graded meshes for a
  time-fractional diffusion equation.
\newblock {\em SIAM J. Numer. Anal.}, 55(2):1057--1079, 2017.

\bibitem{Thomee:2006}
V.~Thom{\'e}e.
\newblock {\em Galerkin {F}inite {E}lement {M}ethods for {P}arabolic
  {P}roblems}.
\newblock Springer-Verlag, Berlin, second edition, 2006.

\bibitem{Vandewalle:1992}
S.~Vandewalle and R.~Piessens.
\newblock Efficient parallel algorithms for solving initial-boundary value and
  time-periodic parabolic partial differential equations.
\newblock {\em SIAM J. Sci. Statist. Comput.}, 13(6):1330--1346, 1992.

\bibitem{WangZhou:2020}
K.~Wang and Z.~Zhou.
\newblock High-order time stepping schemes for semilinear subdiffusion
  equations.
\newblock {\em SIAM J. Numer. Anal.}, 58(6):3226--3250, 2020.

\bibitem{Weideman:2007}
J.~A.~C. Weideman and L.~N. Trefethen.
\newblock Parabolic and hyperbolic contours for computing the {B}romwich
  integral.
\newblock {\em Math. Comp.}, 76(259):1341--1356, 2007.

\bibitem{Weinzierl:2012}
T.~Weinzierl and T.~K\"{o}ppl.
\newblock A geometric space-time multigrid algorithm for the heat equation.
\newblock {\em Numer. Math. Theory Methods Appl.}, 5(1):110--130, 2012.

\bibitem{WuZhou:2015}
S.-L. Wu and T.~Zhou.
\newblock Convergence analysis for three parareal solvers.
\newblock {\em SIAM J. Sci. Comput.}, 37(2):A970--A992, 2015.

\bibitem{XuHesthavenChen:2015}
Q.~Xu, J.~S. Hesthaven, and F.~Chen.
\newblock A parareal method for time-fractional differential equations.
\newblock {\em J. Comput. Phys.}, 293:173--183, 2015.

\bibitem{YanFord:2018}
Y.~Yan, M.~Khan, and N.~J. Ford.
\newblock An analysis of the modified {L}1 scheme for time-fractional partial
  differential equations with nonsmooth data.
\newblock {\em SIAM J. Numer. Anal.}, 56(1):210--227, 2018.

\bibitem{yue2004diffuse}
P.~Yue, J.~J. Feng, C.~Liu, and J.~Shen.
\newblock A diffuse-interface method for simulating two-phase flows of complex
  fluids.
\newblock {\em Journal of Fluid Mechanics}, 515:293, 2004.

\bibitem{ZayernouriKarniadakis:2013}
M.~Zayernouri and G.~E. Karniadakis.
\newblock Fractional {S}turm-{L}iouville eigen-problems: theory and numerical
  approximation.
\newblock {\em J. Comput. Phys.}, 252:495--517, 2013.

\bibitem{ZhuXu:2019}
H.~Zhu and C.~Xu.
\newblock A fast high order method for the time-fractional diffusion equation.
\newblock {\em SIAM J. Numer. Anal.}, 57(6):2829--2849, 2019.

\end{thebibliography}
\end{document}